\documentclass[11pt,a4paper]{article}
\usepackage[margin=2.5cm]{geometry}
\usepackage[T1]{fontenc}
\usepackage[utf8]{inputenc}
\usepackage{lmodern}
\usepackage{amsmath,amssymb,amsthm}
\usepackage{microtype}
\usepackage{enumitem}
\usepackage[hidelinks]{hyperref}
\hypersetup{
  pdftitle={Macroscopic Poincar\'e Inequalities and Weak-Type Characterizations of p-Energy Forms},
  pdfauthor={Fabrice Baudoin, Andrea Pinamonti, Meng Yang},
  pdfsubject={Analysis on metric spaces; p-energy forms; weak-type characterizations},
  pdfkeywords={p-energy forms, Poincar\'e inequalities, Korevaar--Schoen energy, weak monotonicity, weak-type characterizations},
  pdflang={en}
}

\numberwithin{equation}{section}
\theoremstyle{plain}
\newtheorem{theorem}{Theorem}[section]
\newtheorem{proposition}[theorem]{Proposition}
\newtheorem{lemma}[theorem]{Lemma}
\newtheorem{corollary}[theorem]{Corollary}
\theoremstyle{definition}
\newtheorem{assumption}[theorem]{Assumption}
\theoremstyle{remark}
\newtheorem{remark}[theorem]{Remark}

\newcommand{\E}{\mathcal E}
\newcommand{\F}{\mathcal F}
\newcommand{\Floc}{\mathcal F_{\mathrm{loc}}}
\newcommand{\Gp}{\Gamma}
\newcommand{\KS}{\mathrm{KS}}
\newcommand{\GpKS}{\Gamma^{\KS}}
\newcommand{\B}{\mathcal B}
\newcommand{\suppm}{\operatorname{supp}_{m}}
\newcommand{\capc}{\operatorname{cap}}
\newcommand{\diam}{\operatorname{diam}}

\newcommand{\Xint}[1]{\mathchoice
  {\XXint\displaystyle\textstyle{#1}}%
  {\XXint\textstyle\scriptstyle{#1}}%
  {\XXint\scriptstyle\scriptscriptstyle{#1}}%
  {\XXint\scriptscriptstyle\scriptscriptstyle{#1}}%
  \!\int}
\newcommand{\XXint}[3]{{\setbox0=\hbox{$#1{#2#3}{\int}$}%
  \vcenter{\hbox{$#2#3$}}\kern-.6\wd0}}
\newcommand{\avint}{\Xint-}

\title{Macroscopic Poincar\'e Inequalities \\  and
Weak-Type Characterizations of $p$-Energy Forms}
\author{Fabrice Baudoin\thanks{Department of Mathematics, Aarhus University,
Ny Munkegade 118, DK-8000 Aarhus C, Denmark.}
\and Andrea Pinamonti\thanks{Department of Mathematics, University of Trento,
Via Sommarive 14, 38123 Povo (Trento), Italy.}
\and Meng Yang\footnotemark[1]}
\date{}

\begin{document}

\maketitle

\begin{abstract}
For a class of regular strongly local $p$-energy forms on complete doubling
metric measure spaces, $1<p<\infty$, we derive a macroscopic Poincar\'e
inequality from a Poincar\'e inequality and a capacity upper bound at a
doubling scale~$\Psi$.  The macroscopic inequality compares
$L^p$-oscillations at any two scales and yields a uniform two-scale
comparison $J_s\lesssim J_r$, $0<r<s$, for the Korevaar--Schoen
functionals, strengthening their usual weak monotonicity property.
Conversely, under a density assumption on compactly supported functions,
the macroscopic inequality yields a regular strongly local
Korevaar--Schoen $p$-energy satisfying a $\Psi$-Poincar\'e inequality.
We also show that the macroscopic inequality forces lower $p$-scaling
of~$\Psi$ on scales not exceeding~$\diam X$; together with doubling, this
gives two-sided power bounds.  Finally, we establish a weak-type
characterization of the energy domain of Brezis-Van Schaftingen-Yung
type.
\end{abstract}

\tableofcontents

\section{Introduction}

Poincar\'e inequalities are a basic principle of first-order analysis:
the oscillation of a function on a ball is controlled by its energy near
that ball.  On metric measure spaces they provide a way to study Sobolev
functions without using coordinates or derivatives, see  \cite{HKST}.  A macroscopic
Poincar\'e inequality expresses a related principle using only
oscillations: variation at a given scale is controlled by averaged
variation at any smaller scale.  Hyt\"onen and Korte recently established such an
inequality on doubling Poincar\'e spaces, comparing an $L^1$-mean
oscillation with an $L^p$-average of smaller-scale oscillations, normalized
by the usual power scale $r^p$
\cite[Theorem~3.4]{HytonenKorte}.  This raises the question of whether the
same principle holds for energies and geometries whose natural scale is
different from $r^p$.

Regular strongly local $p$-energy forms provide a robust newly developed framework for this
question.  For $1<p<\infty$, such a form $(\E,\F)$ consists of a Sobolev
domain $\F$ and a $p$-energy $\E$; see
\cite{KajinoShimizuContraction,KajinoShimizuKS,Sasaya2026}.
This framework includes the usual gradient energies on Riemannian
manifolds and Carnot groups, as well as energies on fractals.  In the
classical examples the natural scale is $\Psi(r)=r^p$, whereas on fractals
it may be a different power or even a nonpower function.  Moreover, an
energy measure may be singular with respect to the underlying measure,
so that there need not be a gradient density to integrate.  These features
make it useful to formulate Poincar\'e inequalities directly in terms of
energy measures and a scale function $\Psi$.

Our main result is a macroscopic Poincar\'e inequality for $p$-energy forms
at general scales.  We work on a complete doubling metric measure space,
with a doubling scale function $\Psi$ and an energy form satisfying the
standing Assumption~\ref{ass:E} of Section~\ref{sec:framework}.  We assume the usual
$\Psi$-Poincar\'e inequality and a capacity upper bound, which says that
cutoff functions for a ball can be chosen with energy controlled by the
volume of the ball divided by its scale.  For $f\in L^p_{\mathrm{loc}}(X;m)$, write
\[
 M_f(x,r):=\frac1{\Psi(r)}\avint_{B(x,r)}|f-f_{B(x,r)}|^p\,dm.
\]
Theorem~\ref{thm:main} gives constants $C_M>0$ and $A_M\ge1$ such that
\[
 M_f(x,s)\le C_M\avint_{B(x,A_Ms)}M_f(y,r)\,dm(y),
 \qquad x\in X,\quad 0<r<s.
\]
Thus normalized oscillation at a given scale is controlled by its average
at every smaller scale, with constants independent of $f,r$, and~$s$.
For $\Psi(r)=r^p$ it is close in spirit to
\cite[Theorem~3.4]{HytonenKorte}, but the two statements are not directly
comparable: the left-hand side there is an $L^1$-mean oscillation,
whereas both sides of our inequality involve $L^p$-oscillations.

Thus an inequality initially involving the energy yields a comparison
that depends only on the metric, the measure, and the scale function.
In particular, the function need not belong to the energy domain for the
macroscopic inequality to apply.  The proof uses partitions of unity
constructed from capacitary cutoffs, allowing the argument to work with
singular energy measures.  Related partitions and discrete approximations
in $p$-Dirichlet spaces appear in
\cite[Lemmas~4.2 and~4.5]{ErikssonBique2026}.
Only doubling of $\Psi$ is assumed at this stage; no lower power bound is
required as an input.  A lower $p$-scaling bound is derived afterwards as
a consequence (Corollary~\ref{cor:forward-LS}).

Our first application is a uniform comparison of the Korevaar--Schoen functionals~\cite{KorevaarSchoen1993}.  For~$f\in L^p(X;m)$, set
\[
 J_r(f):=\frac1{\Psi(r)}\int_X\avint_{B(x,r)}
 |f(x)-f(y)|^p\,dm(y)\,dm(x).
\]
Integrating the macroscopic inequality gives the two-scale estimate
$J_s(f)\lesssim J_r(f)$ whenever $0<r<s$, with a constant independent of
both scales.  Consequently, control along any sequence of radii tending to
zero gives control at every radius.  This strengthens the weak
monotonicity property
\[
 \sup_{r>0}J_r(f)\lesssim\liminf_{r\downarrow0}J_r(f).
\]
A version of the latter for $p=1$ appears in the study of $BV$ functions
in \cite{ABCRST3}; for general $p$ it was formulated as property
$P(p,\alpha)$ in \cite[Definition~4.5]{BaudoinKSS}.  The terminology
weak monotonicity is used in \cite{GaoYuZhang2026} and
\cite[Definition~6.3.5]{KajinoShimizuKS}.
Related results on trees appear in \cite{BaudoinChenYangTrees}, and on
self-similar Laakso-type spaces in
\cite[Theorem~7.1]{AnttilaErikssonBiqueShimizu2026}.
To our knowledge, the available
results on fractals and in Dirichlet-form or $p$-energy settings, for
instance \cite{GaoYuZhang2026,ChangGaoYuZhang2024,ChenGaoYuZhang2026,
KajinoShimizuKS} and \cite[Theorem~1.4]{MuruganShimizuCarpet}, establish
weak monotonicity in this $\sup$--$\liminf$ form.  For $\Psi(r)=r^p$ on
doubling Poincar\'e spaces, a uniform two-scale comparison of the
corresponding $L^1$-oscillation functionals follows by integrating
\cite[Theorem~3.4]{HytonenKorte}; the two-scale comparison of the
Korevaar--Schoen functionals at general scales~$\Psi$ appears to be new.

A second application is geometric.  Proposition~\ref{prop:automatic-LS} shows that volume doubling and the macroscopic Poincar\'e inequality imply that~$X$ is connected and force the scale function to satisfy
\[
 \frac{\Psi(r)}{\Psi(R)}\lesssim\left(\frac rR\right)^p,
 \qquad 0<r\le R\le\diam X.
\]
When $X$ is unbounded, there is no upper restriction on $R$.
The proposition requires no energy form, chain condition, or doubling assumption on~$\Psi$.  It obtains the lower $p$-scaling restriction of~\cite{Yang2026} directly from an inequality involving only oscillations.  Together with doubling of~$\Psi$, this bound shows that every scale function admissible in our forward results is of two-sided power type (Remark~\ref{rem:O-regular}); such scale functions need not be powers, as the tree and Laakso-type examples show.  The lower bound will also be used in the weak-type characterization below.

Under our hypotheses, the domain characterization of Shimizu~\cite[Corollaries~1.14 and~1.15]{ShimizuBesov} gives
\[
 \F
 =\left\{f\in L^p(X;m):\liminf_{r\downarrow0}J_r(f)<\infty\right\}
 =\B_\Psi,
 \qquad
 \B_\Psi:=\left\{f\in L^p(X;m):\sup_{r>0}J_r(f)<\infty\right\},
\]
and, for $f\in\F$,
\[
 \E(f)\asymp\liminf_{r\downarrow0}J_r(f)
 \asymp\sup_{r>0}J_r(f).
\]
We recall this essentially known characterization without proof in Subsection~\ref{subsec:domain-id}, with comments on its applicability under the present hypotheses because t will be used in the reconstruction of the energy and in the weak-type characterization below. For connections with other Sobolev theories on metric spaces, see
\cite{KoskelaShanmugalingamTyson2004,AlonsoRuizBaudoin2025}.

The converse asks whether an energy can be constructed from the
macroscopic inequality itself.  Starting with a complete doubling metric
measure space and a doubling scale function satisfying this inequality,
we assume that $\B_\Psi\cap C_c(X)$ is uniformly dense in $C_c(X)$.
This assumption cannot be dropped: the macroscopic inequality persists
when $\Psi$ is multiplied by a nondecreasing function
(Remark~\ref{rem:mPI-monotone}), whereas Korevaar--Schoen classes at too
fast a scale contain only constants.
A common subsequential limit of the functionals $J_r$ then yields a
regular strongly local $p$-energy on the domain $\F_{\KS}$ obtained by
closing $\B_\Psi\cap C_c(X)$ in its natural Besov norm.  The energy is comparable
to $\sup_{r>0}J_r$ on this domain and satisfies a
$\Psi$-Poincar\'e inequality with a fixed dilation of balls.
The existence of the form follows from the Korevaar--Schoen theory of
$p$-energy forms \cite{KajinoShimizuKS,Sasaya2026} once weak monotonicity
is available; the new point is the Poincar\'e inequality, which we recover
from the macroscopic inequality.  For related constructions of
$p$-energies as limits of discrete or nonlocal energies, see
\cite{Kigami2023,CaoGuQiu2022,MuruganShimizuCarpet} and, for $p=2$,
\cite{KumagaiSturm2005}.
Under the forward hypotheses, $\F_{\KS}=\F$ and the constructed energy is
equivalent to the original one.
For a preassigned energy form, the converse takes a related form:
Poincar\'e is equivalent to the macroscopic inequality together with a
local bound on small-scale oscillations by that energy, under volume
doubling and the capacity upper bound
(Proposition~\ref{prop:microscopic-control}).

Our final result gives a second description of the same energy domain,
using the distribution of pairwise differences instead of their averages
at a fixed radius.  In Euclidean space, the
Brezis--Van Schaftingen--Yung formula recovers the Sobolev energy from the
weak $L^p$ size of the difference quotient
$|f(x)-f(y)|/|x-y|^{1+n/p}$
\cite{BVSY2021,BSSVY2024,DominguezMilman2022}.
Poliakovsky's characterization further shows that, for an $L^p$ function
on a Euclidean Lipschitz domain, the behavior of the distribution at high
levels already determines Sobolev membership
\cite[Theorem~1.3]{Poliakovsky2022}.
Metric versions for Lipschitz test functions were developed in
\cite{DLYYZ,DaiLiYangYuanZhao2026}, and
Hyt\"onen--Yang--Yuan--Zhao characterize the full homogeneous Sobolev
space defined by $p$-weak upper gradients on complete doubling $p$-Poincar\'e
spaces \cite[Theorem~1.1(ii)]{HytonenYangYuanZhao2026}.
Related nonlocal characterizations of Sobolev and $BV$ functions appear in
\cite{DS,LahtiPinamontiZhou20242,LahtiPinamontiZhou2024,BrenaPinamonti}.

In our setting, the scale and volume determine the appropriate quotient.
Writing $V(x,r)=m(B(x,r))$, set
\[
 K_{\Psi,f}(x,y):=
 \frac{|f(x)-f(y)|^p}
 {\Psi(d(x,y))V(x,d(x,y))},\qquad x\ne y,
\]
and set $K_{\Psi,f}=0$ on the diagonal.
Under the forward hypotheses, Theorem~\ref{thm:pairwise-weak} shows that
an $L^p$ function belongs to $\F$ if and only if either of the following
two quantities is finite; for $f\in\F$, both are equivalent to the energy:
\[
 \E(f)
 \asymp\sup_{\lambda>0}\lambda\,(m\otimes m)
       \bigl(\{K_{\Psi,f}>\lambda\}\bigr)
 \asymp\limsup_{\lambda\to\infty}\lambda\,(m\otimes m)
       \bigl(\{K_{\Psi,f}>\lambda\}\bigr).
\]
For $\Psi(r)=r^p$ and upper-gradient energies, the weak-norm
characterization is covered by the metric theory cited above.
Here the same description extends to doubling scale functions other than
$r^p$ and to possibly singular energy measures.

The link with the preceding results is the macroscopic control of
oscillations.  A bound on the high-level tail of $K_{\Psi,f}$ yields
control of localized Korevaar--Schoen functionals along small scales.
The macroscopic inequality propagates this control, and the domain
characterization then gives membership in $\F$.  In the other direction,
Poincar\'e estimates and a covering argument bound the weak quantity by
the energy, using the energy measure directly even when it is singular.
The Korevaar--Schoen and Brezis--Van Schaftingen--Yung descriptions thus
recover the same first-order structure from two different ways of
measuring pairwise differences.

\vspace{0.8cm}

Section~\ref{sec:framework} gives the standing assumptions, states the
main results, and presents examples.  Section~\ref{sec:macroscopic} proves
the macroscopic Poincar\'e inequality and its relation with microscopic
energy control, derives the two-scale comparison of the Korevaar--Schoen
functionals and lower $p$-scaling, and recalls the identification of the
energy domain.  Section~\ref{sec:converse} constructs a Korevaar--Schoen
$p$-energy from the macroscopic inequality.  Finally,
Section~\ref{sec:weak} proves the Brezis--Van Schaftingen--Yung-type
characterization.

\section{Standing framework and main results}\label{sec:framework}

In this section, we present the framework and our main results. Proofs are given in the later sections.

\subsection{Metric measure space and scale function}\label{sec:metric-framework}

Fix $p\in(1,\infty)$.  Let $(X,d)$ be a nontrivial complete metric space,
that is, a complete metric space with at least two points.
For $x\in X$ and $r>0$ write
\[
 B(x,r)=\{y\in X:d(x,y)<r\}.
\]
Let $m$ be a positive Radon measure on $X$ with full support such that
\[
 0<m(B(x,r))<\infty,\qquad x\in X,\ r>0.
\]
Set $V(x,r)=m(B(x,r))$.
If $B=B(x,r)$ and $\lambda>0$, write $\lambda B=B(x,\lambda r)$.  For a
measurable set $A$ with $0<m(A)<\infty$ put
\[
h_A= \avint_A h\,dm=\frac1{m(A)}\int_A h\,dm.
\]
Since $X=\bigcup_{n\ge1}B(o,n)$ for any $o\in X$, the measure $m$ is
$\sigma$-finite.

The volume doubling condition is
\begin{equation*}\label{eq:VD}\tag*{$\text{VD}$}
 V(x,2r)\le C_DV(x,r),\qquad x\in X,\ r>0,
\end{equation*}
for some $C_D\ge1$.

For $a\ge1$, set
\begin{equation}\label{eq:CD-fixed}
 C_D(a):=C_D^{\lceil\log_2 a\rceil}.
\end{equation}
It is well known that \eqref{eq:VD} implies
$V(x,ar)\le C_D(a)V(x,r)$ whenever $a\ge1$.

By the classical packing argument that we recall in Lemma~\ref{lem:packing}, \eqref{eq:VD}
implies that $(X,d)$ is metrically doubling.  Completeness then implies
properness.  In particular, $X$ is locally compact and separable, and every
ball has compact closure.  We use these consequences whenever
\eqref{eq:VD} is assumed.

Let $\Psi:[0,\infty)\to[0,\infty)$ be a doubling homeomorphism: there is
$C_\Psi\ge1$ such that
\begin{equation}\label{eq:Psi-doubling}
 \Psi(2r)\le C_\Psi\Psi(r),\qquad r>0.
\end{equation}
For $a\ge1$, set $C_\Psi(a):=C_\Psi^{\lceil\log_2a\rceil}$.  Then
\begin{equation}\label{eq:Psi-fixed}
 \Psi(ar)\le C_\Psi(a)\Psi(r),\qquad r>0.
\end{equation}
Throughout, $m\otimes m$ denotes the corresponding product measure.  For
nonnegative quantities $A$ and
$B$, the notation $A\lesssim B$ means $A\le CB$, $A\gtrsim B$ means
$B\lesssim A$, and $A\asymp B$ means that both inequalities hold.  Unless
otherwise stated, $C$ denotes a positive constant that may change from line
to line and depends only on the data explicitly named in the statement
under consideration.

For every positive $\sigma$-finite Radon measure $\mu$ and every Borel,
locally $\mu$-integrable function $h$, parameter-dependent integration
shows that the maps $(x,r)\mapsto\mu(B(x,r))$ and
$(x,r)\mapsto\int_{B(x,r)}h\,d\mu$ are Borel.  The same holds for ball
averages whenever their denominators are positive and finite.

For $u\in L^1_{\mathrm{loc}}(X;m)$ we use the representative-independent
measure-theoretic support
\begin{equation}\label{eq:suppm}
 \suppm u:=\operatorname{supp}(|u|m).
\end{equation}
Thus $u=0$ $m$-a.e. on $X\setminus\suppm u$.  This convention is useful
because the energy measures below need not be absolutely continuous with
respect to $m$.

\subsection{\texorpdfstring{$p$}{p}-Energy forms and analytic assumptions}
\label{sec:energy-framework}

Let $(\E,\F)$ be a $p$-energy form on $L^p(X;m)$ with domain
$\F\subset L^p(X;m)$.  The forward results are stated under the following
standing assumption.

\begin{assumption}\label{ass:E}
\begin{enumerate}[label=(E\arabic*)]
\item\label{E1} $(\E,\F)$ is a strongly local regular $p$-energy form in
the sense of Kajino--Shimizu and Sasaya
\cite{KajinoShimizuContraction,Sasaya2026}: $\F$ is dense in
$L^p(X;m)$, $\E^{1/p}$ is a seminorm, and the form is closed, Markovian,
regular, and strongly local.
\item\label{E2} The $p$-Clarkson inequalities hold: with $q=p/(p-1)$, for
every $f,g\in\F$,
\begin{equation}\label{eq:clarkson-low}
 \begin{aligned}
 2\bigl(\E(f)^{q/p}+\E(g)^{q/p}\bigr)^{p-1}
 &\le \E(f+g)+\E(f-g)\\
 &\le 2\bigl(\E(f)+\E(g)\bigr),
 \end{aligned}
 \qquad 1<p\le2,
\end{equation}
and
\begin{equation}\label{eq:clarkson-high}
 \begin{aligned}
 2\bigl(\E(f)+\E(g)\bigr)
 &\le \E(f+g)+\E(f-g)\\
 &\le 2\bigl(\E(f)^{q/p}+\E(g)^{q/p}\bigr)^{p-1},
 \end{aligned}
 \qquad 2\le p<\infty.
\end{equation}
\item\label{E3} $\F\cap L^\infty(X;m)$ is an algebra and
\begin{equation}\label{eq:algebra}
 \E(fg)^{1/p}
 \le \|f\|_{\infty}\E(g)^{1/p}+\|g\|_{\infty}\E(f)^{1/p},
 \qquad f,g\in\F\cap L^\infty(X;m).
\end{equation}
\item\label{E4} The canonical $p$-energy measures satisfy the localization
property \ref{G4} below.
\end{enumerate}
\end{assumption}

At $p=2$ the two Clarkson inequalities agree.  Under \ref{E1}, the
normal-contraction property
\begin{equation}\label{eq:normal-contraction}
 T\circ f\in\F,\qquad \E(T\circ f)\le \E(f)
\end{equation}
whenever $T:\mathbb R\to\mathbb R$ is $1$-Lipschitz, $T(0)=0$, and
$f\in\F$, follows from the energy-form axioms; see
Sasaya~\cite[Proposition~4.5]{Sasaya2026}.

For $f\in\F$, let $\Gp(f)$ denote its canonical $p$-energy measure, which
exists under \ref{E1} by Sasaya~\cite[Theorem~2.4]{Sasaya2026}.  We also
introduce the local domain
\begin{equation}\label{eq:Floc}
 \Floc=\left\{u\in L^p_{\mathrm{loc}}(X;m):\ \begin{aligned}
 &\text{for every relatively compact open }U\subset X,\\
 &\text{there is }u^\#\in\F\text{ with }u=u^\#\quad m\text{-a.e. on }U
 \end{aligned}\right\}.
\end{equation}
We use the following properties of the energy measures:
\begin{enumerate}[label=(G\arabic*)]
\item\label{G1} $\Gp(f)$ is a positive Radon measure and
$\Gp(f)(X)=\E(f)$ for every $f\in\F$;
\item\label{G2} for every Borel set $A$, the map
$f\mapsto \Gp(f)(A)^{1/p}$ is a seminorm on $\F$;
\item\label{G3} if $f,g\in\F\cap C_c(X)$ and $f-g$ is constant on a Borel
set $A$, then $\Gp(f)(A)=\Gp(g)(A)$;
\item\label{G4} if $f,g\in\F$ agree $m$-a.e. on an open set $U$, then
$\left.\Gp(f)\right|_U=\left.\Gp(g)\right|_U$.  Consequently the map
$f\mapsto\Gp(f)$ localizes consistently to $\Floc$.
\end{enumerate}
The global properties \ref{G1}--\ref{G3} follow from
Sasaya~\cite[Theorem~2.4]{Sasaya2026}.  The localization property
\ref{G4}, that is, \ref{E4}, is imposed separately as the local-domain
compatibility needed to define $\Gp(u)$ for $u\in\Floc$; compare
Yang~\cite[paragraph preceding Lemma~3.4]{Yang2026}.

The Poincar\'e inequality $\mathrm{PI}(\Psi)$ means that there are $C_P>0$
and $A_P\ge1$ such that
\begin{equation*}\label{eq:PI}\tag*{$\text{PI}(\Psi)$}
 \int_B|u-u_B|^p\,dm\le C_P\Psi(R)\Gp(u)(A_PB)
\end{equation*}
for every ball $B=B(x,R)$ and $u\in\F$.  For Borel sets $A_1,A_2\subset X$
define
\[
 \capc(A_1,A_2)
 =\inf\left\{\E(\phi):\begin{array}{l}
 \phi\in\F,\ \phi=1\ m\text{-a.e. on an open neighborhood of }A_1,\\
 \phi=0\ m\text{-a.e. on an open neighborhood of }A_2
 \end{array}\right\}.
\]
The capacity upper bound $\mathrm{cap}(\Psi)_{\le}$ means that there are
$C_c>0$ and $A_c>1$ such that
\begin{equation*}\label{eq:cap}\tag*{$\text{cap}(\Psi)_\le$}
 \capc\bigl(B(x,R),X\setminus B(x,A_cR)\bigr)
 \le C_c\frac{V(x,R)}{\Psi(R)},
 \qquad x\in X,\quad 0<R<\diam X.
\end{equation*}
Thus, when $X$ is bounded, no capacity condition at or above its diameter
is included in $\mathrm{cap}(\Psi)_{\le}$.
At an admissible radius for which $B(x,A_cR)=X$, the estimate is automatic.
Indeed, boundedness and the preceding properness observation make $X$
compact; the regularity and normal-contraction argument used in the proof of
Lemma~\ref{lem:constants} gives $1\in\F$, and \ref{G1} and \ref{G3} give
$\E(1)=0$.

When the corresponding assumptions are in force, the structural data are
\[
 p,\ C_D,\ C_\Psi,\ C_P,\ A_P,\ C_c,\ A_c.
\]
Unless otherwise specified, constants in the forward results depend only
on these data.

\subsection{Macroscopic inequality}

For $f\in L^p_{\mathrm{loc}}(X;m)$ set
\begin{equation}\label{eq:M}
 M_f(x,r)=\frac1{\Psi(r)}\avint_{B(x,r)}|f-f_{B(x,r)}|^p\,dm.
\end{equation}
The preceding measurability observation shows that $(x,r)\mapsto M_f(x,r)$
is Borel.  It is unchanged by altering $f$ on an $m$-null set, so the
averages below are independent of the representative.

We say that the macroscopic Poincar\'e inequality \eqref{eq:mPI} holds if there exist $C_M>0$ and $A_M\ge1$ such that for every $f\in L^p_{\mathrm{loc}}(X;m)$ and $x\in X$,
\begin{equation*}\label{eq:mPI}\tag*{$\text{mPI}(\Psi)$}
 M_f(x,s)\le C_M\avint_{B(x,A_Ms)}M_f(y,r)\,dm(y),\qquad 0<r<s.
\end{equation*}

\begin{theorem}\label{thm:main}
Suppose that Assumption~\ref{ass:E} holds, together with \eqref{eq:VD},
\eqref{eq:PI}, and \eqref{eq:cap}.  Then \eqref{eq:mPI} holds, with
constants depending only on the structural data.
\end{theorem}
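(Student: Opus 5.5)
Our plan is to prove \eqref{eq:mPI} by a discretization at scale $r$, using a partition of unity built from capacitary cutoffs. Fix $f\in L^p_{\mathrm{loc}}(X;m)$, $x\in X$, $0<r<s$.

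\emph{Reduction to $r$ comparable to $s$.} If $r\ge s/K$ for a fixed large $K$, the claim is elementary. Cover $B(x,s)$ by boundedly many balls $B(z_j,r)$, with the number controlled by \eqref{eq:VD}. For every $y\in B(z_j,r/2)$ we have $B(z_j,r/2)\subset B(y,r)$, hence
\[
\avint_{B(z_j,r/2)}|f-f_{B(y,r)}|^p\,dm\lesssim \Psi(r)M_f(y,r).
\]
A chaining argument along overlapping balls connects the local averages. Bounding $\Psi(s)\ge\Psi(r)$ then gives the estimate with constants depending on $K$. The chaining needs no connectivity input, because the overlap terms are themselves averages of $M_f$ over sets of comparable measure. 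So the substantive case is $r\ll s$.

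\emph{Construction for $r\ll s$.} Take a maximal $r$-separated net $\{z_i\}$ in $B(x,2s)$. By \eqref{eq:cap}, with the admissible-radius caveat handled by the remark after its statement, choose cutoffs $\phi_i\in\F$ with $\phi_i=1$ near $B(z_i,r)$, $\phi_i=0$ off $B(z_i,A_cr)$, and $\E(\phi_i)\lesssim V(z_i,r)/\Psi(r)$. Using normal contraction we may assume $0\le\phi_i\le1$. Normalize by $\psi_i=\phi_i/\sum_j\phi_j$. This is legitimate: the sum is at least $1$ on $B(x,s)$, and by bounded overlap (from \eqref{eq:VD}) only boundedly many terms are nonzero at any point. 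Then \ref{E3} gives $\E(\psi_i)\lesssim V(z_i,r)/\Psi(r)$, with the energy measure of $\psi_i$ supported in $B(z_i,A_cr)$ (via \ref{G3}/\ref{G4}).

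Set $a_i=f_{B(z_i,A_cr)}$ and define the discrete approximation $g=\sum_i a_i\psi_i\in\Floc$. Since $\sum\psi_i=1$ on $B(x,s)$, strong locality lets us replace $a_i$ by $a_i-a_k$ near $z_k$. Together with \ref{G2}, this gives for each $k$
\[
\Gp(g)(B(z_k,A_cr))\lesssim\sum_{i\sim k}|a_i-a_k|^p\,\frac{V(z_k,r)}{\Psi(r)}.
\]
The differences $|a_i-a_k|^p$ are controlled by $\Psi(r)\,M_f(y,Cr)$ for every $y$ in $B(z_k,r)$. Averaging over $y$ then gives
\[
\Gp(g)(B(x,A's))\lesssim\int_{B(x,A''s)}M_f(y,Cr)\,dm(y).
\]

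\emph{Main estimate.} Split
\[
\int_{B(x,s)}|f-f_B|^p\lesssim\int_B|f-g|^p+\int_B|g-g_B|^p .
\]
For the first term, on each $B(z_k,r)$, $f-g$ is a convex combination of $f-a_i$, so
\[
\int_B|f-g|^p\lesssim\sum_k\int_{B(z_k,A_cr)}|f-a_k|^p\lesssim\Psi(r)\int M_f(y,Cr)\,dm(y),
\]
after averaging over $y$ as before. This term is bounded because $\Psi(r)\le\Psi(s)$. For the second term apply \eqref{eq:PI} to a cutoff-modified version of $g$ in $\F$, which agrees with $g$ on $A_PB$. This yields $\Psi(s)\Gp(g)(A_PB)$. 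Dividing by $\Psi(s)V(x,s)$ and using \eqref{eq:VD} produces
\[
C\avint_{B(x,A s)}M_f(y,Cr)\,dm(y).
\]
The final step replaces $M_f(\cdot,Cr)$ by $M_f(\cdot,r)$. We do this by running the construction at radius $r/C$ instead, which costs only a factor $C_\Psi(C)$ by doubling of $\Psi$.

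The expected main obstacle is the energy bound for $g$. It requires the partition of unity to lie in $\F$ with controlled, localized energy, and it requires the localized triangle inequality \ref{G2} to be compatible with non-absolutely-continuous energy measures. Near points where $\sum\phi_j$ is small, the algebra estimate \ref{E3} must be applied to $1/\sum\phi_j$. We would handle this by composing with a Lipschitz function on $[1,N]$, which is valid only on a neighborhood of $B(x,2s)$, with localization \ref{G4} used to discard the exterior.
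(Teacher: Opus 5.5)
Your main construction is essentially the paper's argument. It uses a capacitary partition of unity at a scale that is a fixed small fraction of $r$; the paper takes $t=\theta r$ with $\theta=1/(16(L+2))$, which is your ``run the construction at radius $r/C$''. It then takes coefficients given by ball averages, uses convexity for $\int_B|f-g|^p$, and applies \eqref{eq:PI} to the approximant, whose energy on $A_PB$ is bounded through coefficient differences.

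\textbf{The gap: the preliminary reduction.} The regime $r\ge s/K$ is not elementary, and your claim that the chaining needs no connectivity input is false. A chain of overlapping balls of radius comparable to $r$, joining any two parts of $B(x,s)$ inside a fixed dilate, is exactly a connectivity hypothesis, and volume doubling does not supply it. For instance, $X=[0,1]\cup[2,3]$ with Lebesgue measure is complete and doubling with $C_D=4$. Take $f=\mathbf 1_{[0,1]}$, $x=1/2$, $s=5/2$ and $r=9/10$. Then $M_f(y,r)=0$ for every $y\in X$, because each $B(y,r)$ lies in a single component, whereas $M_f(x,s)>0$. So \eqref{eq:mPI} fails at ratio $s/r<3$. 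This matches Proposition~\ref{prop:automatic-LS}: \eqref{eq:mPI} forces connectedness, so at comparable scales it must come from \eqref{eq:PI}, not from doubling.

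\textbf{Repair.} Drop the reduction; nothing in your construction requires $r\ll s$. Run at scale $\theta r$, it works for every $0<r<s$ with $r\le\diam X$, since it uses only $\Psi(\theta r)\le\Psi(r)\le\Psi(s)$ and fixed-ratio doubling. This choice also keeps the cutoff radius below $\frac12\diam X$, so \eqref{eq:cap} is actually available there; the remark after \eqref{eq:cap} does not cover radii at or above $\diam X$. The only leftover case, $\diam X<\infty$ and $r>\diam X$, is trivial, because then every ball of radius $r$ or $s$ equals $X$.

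\textbf{Two technical points.} First, a net in $B(x,2s)$ is too small. The energy bound for $g$ relies on $\sum_i\psi_i=1$, and you need this on a neighborhood of $A_PB$, where \eqref{eq:PI} measures $\Gp(g)$, not merely on $B(x,s)$. Near the edge of the covered region, $g-a_k$ contains the extra term $a_k\bigl(\sum_i\psi_i-1\bigr)$. Its energy is of order $|a_k|^p$ and is not controlled by oscillations of $f$. Take the net in all of $X$, as the paper does, so that $g\in\Floc$ and Lemma~\ref{lem:PI-local} applies; alternatively, take it at least in $B(x,(A_P+2)s)$.

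Second, the algebra estimate \eqref{eq:algebra} is global, so applying it to $\phi_i\cdot\bigl(\sum_j\phi_j\bigr)^{-1}$ brings in the energy of the entire sum. No localization of the Leibniz rule is needed, however. The paper replaces the full sum by the finite local sum $S_i$ over those $j$ whose cutoffs can meet the support of $\phi_i$; this agrees with the full sum on $\{\phi_i\ne0\}$. It then sets $\psi_i=\phi_iH(S_i)$, where $H$ is the global normal contraction with $H(s)=s$ on $[0,1]$ and $H(s)=1/s$ on $[1,\infty)$. This gives the bound $\E(\psi_i)\lesssim V(z_i,t)/\Psi(t)$ directly.
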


\subsection{Consequences of the macroscopic inequality}

Define the Korevaar--Schoen functional
\begin{equation}\label{eq:J}
 J_r(f):=\frac1{\Psi(r)}\int_X\avint_{B(x,r)}|f(x)-f(y)|^p\,dm(y)\,dm(x),
\end{equation}
and the Besov--Korevaar--Schoen space
\begin{equation}\label{eq:Bpsi}
 \begin{aligned}
 \B_\Psi&:=\{f\in L^p(X;m):\sup_{r>0}J_r(f)<\infty\},\\
 \|f\|_{\B_\Psi}&:=\|f\|_{L^p}+\bigl(\sup_{r>0}J_r(f)\bigr)^{1/p}.
 \end{aligned}
\end{equation}

The macroscopic inequality integrates to a uniform two-scale comparison of
the Korevaar--Schoen functionals; the proof is given in
Subsection~\ref{sec:two-scale}.

\begin{theorem}\label{thm:two-scale}
Assume \eqref{eq:VD}, \eqref{eq:Psi-doubling}, and \eqref{eq:mPI}.  Then
for every $f\in L^p(X;m)$ and $0<r<s$,
\begin{equation}\label{eq:J-WM}
 J_s(f)\le CJ_r(f),
\end{equation}
where $C$ depends only on $p,C_D,C_\Psi,C_M$, and $A_M$.  In particular,
\begin{equation}\label{eq:WM}
 \sup_{s>0}J_s(f)\le C\liminf_{r\downarrow0}J_r(f).
\end{equation}
\end{theorem}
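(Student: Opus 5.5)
The plan is to compare $J_r$ with the integrated macroscopic quantity $\int_X M_f(x,r)\,dm(x)$ in both directions, and then integrate \eqref{eq:mPI} over $x$.

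\emph{Step 1: upper bound for $J_s$.} For $x\in X$ and $y\in B(x,s)$,
\[
|f(x)-f(y)|^p\le 2^{p-1}\bigl(|f(x)-f_{B(x,s)}|^p+|f(y)-f_{B(x,s)}|^p\bigr).
\]
Averaging in $y$ over $B(x,s)$ gives
\[
\Psi(s)J_s(f)\le 2^{p-1}\int_X|f(x)-f_{B(x,s)}|^p\,dm(x)+2^{p-1}\Psi(s)\int_X M_f(x,s)\,dm(x).
\]
The first term is controlled by a standard swap. For $y\in B(x,s)$ we have $B(x,s)\subset B(y,2s)$, and by \eqref{eq:VD} $V(y,2s)\le C_D^2V(x,s)$. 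Hence
\[
|f(x)-f_{B(x,s)}|^p\le\avint_{B(x,s)}|f(x)-f(z)|^p\,dm(z)\lesssim\avint_{B(x,2s)}|f(x)-f(z)|^p\,dm(z).
\]
I would rather avoid scale changes here. The direct bound $|f(x)-f_{B(x,s)}|^p\le\avint_{B(x,s)}|f(x)-f(y)|^p\,dm(y)$ (Jensen) is circular for this purpose, so instead I bound the first term via Lebesgue points. At a Lebesgue point $x$, write $f(x)-f_{B(x,s)}$ as a telescoping sum over $B(x,2^{-k}s)$. This is messy, so I prefer the cleaner route: directly, $\int_X\avint_{B(x,s)}|f(x)-f(y)|^p\,dm(y)\,dm(x)\lesssim\int_X\avint_{B(x,2s)}|f-f_{B(x,2s)}|^p\,dm\,dm(x)$. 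This follows by covering $X$ with balls $B_i=B(x_i,s)$ from a maximal $s$-separated net, which have bounded overlap of $2B_i$ by Lemma~\ref{lem:packing}. For $x\in B_i$ we have $B(x,s)\subset 2B_i$, and
\[
|f(x)-f(y)|^p\lesssim|f(x)-f_{2B_i}|^p+|f(y)-f_{2B_i}|^p,
\]
with $V(x,s)\asymp m(2B_i)$. Thus $J_s(f)\lesssim\Psi(s)^{-1}\sum_i\int_{2B_i}|f-f_{2B_i}|^p\,dm$. Each summand is bounded by $\int_{B_i}\avint_{B(z,4s)}|f-f_{B(z,4s)}|^p\,dm\,dm(z)$ up to constants, using $2B_i\subset B(z,4s)$ for $z\in B_i$, volume comparability, and the fact that the mean minimizes $\int|f-c|^p$ up to $2^p$. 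Bounded overlap and \eqref{eq:Psi-doubling} then give
\[
J_s(f)\lesssim\int_X M_f(z,4s)\,dm(z).
\]

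\emph{Step 2: lower bound at scale $r$.} Conversely, $\avint_{B(y,r)}|f-f_{B(y,r)}|^p\,dm\le 2^p\avint_{B(y,r)}\avint_{B(y,r)}|f(u)-f(v)|^p\,dm\,dm$. Integrating in $y$ and using Fubini, each pair $(u,v)$ with $d(u,v)<2r$ is counted with weight at most $\int_{\{y:\,u,v\in B(y,r)\}}V(y,r)^{-2}\,dm(y)\lesssim V(u,2r)^{-1}$ by \eqref{eq:VD}. Hence
\[
\int_X M_f(y,r)\,dm(y)\lesssim\frac{\Psi(2r)}{\Psi(r)}J_{2r}(f).
\]
This produces $J_{2r}$ rather than $J_r$. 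To fix it, I apply the bound with $r/2$ in place of $r$: $\int_X M_f(y,r/2)\,dm(y)\lesssim J_r(f)$, using \eqref{eq:Psi-doubling}.

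\emph{Step 3: integrate the macroscopic inequality.} Apply \eqref{eq:mPI} with radii $r/2<4s$ and integrate in $x$. Fubini together with $V(x,A_Ms)\asymp V(y,A_Ms)$ for $y\in B(x,A_Ms)$ gives
\[
\int_X\avint_{B(x,A_Ms)}M_f(y,r/2)\,dm(y)\,dm(x)\lesssim\int_X M_f(y,r/2)\,dm(y).
\]
Combining Steps 1--3,
\[
J_s(f)\lesssim\int_X M_f(\cdot,4s)\,dm\lesssim\int_X M_f(\cdot,r/2)\,dm\lesssim J_r(f).
\]
Finally, \eqref{eq:WM} follows by taking $\liminf_{r\downarrow0}$ and then $\sup_s$. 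The main obstacle is Step 1, the local comparison of $J_s$ with integrated $M_f$ at a comparable scale, which needs the net and bounded-overlap bookkeeping. The scale shifts ($4s$ and $r/2$) are harmless because \eqref{eq:mPI} holds for all pairs $0<r<s$.
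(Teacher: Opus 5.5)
Your proposal is correct and follows the paper's route. You compare $J$ with $\mathcal O_\rho(f)=\int_X M_f(\cdot,\rho)\,dm$ in both directions and integrate \eqref{eq:mPI} via Tonelli, and your Step~2 is exactly the paper's kernel bound $\mathcal O_r\lesssim J_{2r}$. The only real difference is Step~1: you prove $J_s\lesssim\mathcal O_{4s}$ by a net-and-bounded-overlap argument, whereas the paper gets $J_r\lesssim\mathcal O_{2r}$ without nets, from the lower bound $H_{2r}(y,z)\gtrsim V(y,r)^{-1}$ for $d(y,z)<r$ on the same double-average kernel. One harmless typo: in Step~3 the averaging ball should be $B(x,4A_Ms)$, not $B(x,A_Ms)$.
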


The macroscopic inequality also has a geometric consequence, which
requires neither an energy form nor doubling of~$\Psi$.  For $\beta>0$,
we say that $\Psi$ satisfies $\mathrm{LS}(\beta)$ if there exists
$C_L\ge1$ such that
\begin{equation}\label{eq:Psi-lower}
 \frac{\Psi(r)}{\Psi(R)}
 \le C_L\left(\frac rR\right)^\beta,
 \qquad 0<r\le R<\infty.
\end{equation}
When $\diam X<\infty$, the condition is required only for
$R\le\diam X$.  The proof of the following proposition is given in
Subsection~\ref{sec:scaling}.

\begin{proposition}
\label{prop:automatic-LS}
Let $p\in(1,\infty)$, let $(X,d)$ be a nontrivial complete metric space,
and let $m$ be a positive Radon measure of full support satisfying
\[
 0<m(B(x,r))<\infty,\qquad x\in X,\ r>0.
\]
Let $\Psi:[0,\infty)\to[0,\infty)$ be an increasing homeomorphism.
Assume
\eqref{eq:VD} and \eqref{eq:mPI}.  Then $X$ is connected and $\Psi$
satisfies $\mathrm{LS}(p)$:
\begin{equation}\label{eq:automatic-LS}
 \frac{\Psi(r)}{\Psi(R)}
 \le C\left(\frac rR\right)^p
\end{equation}
whenever $0<r\le R<\infty$ and, if $\diam X<\infty$,
$R\le\diam X$.  The constant depends only on $p,C_D,C_M$, and $A_M$.
\end{proposition}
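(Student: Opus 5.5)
The plan is to test \eqref{eq:mPI} on two explicit functions: an indicator function to get connectedness, and a distance function to get $\mathrm{LS}(p)$. In both cases the right-hand side is bounded from above through a pairwise-difference estimate, and the left-hand side from below using \eqref{eq:VD}. The basic upper bound is the following. Since $\avint_B|f-f_B|^p\,dm\le\avint_B\avint_B|f(z)-f(w)|^p\,dm(w)\,dm(z)$, any $f$ with $|f(z)-f(w)|\le L\,d(z,w)$ satisfies $M_f(y,r)\le (2Lr)^p/\Psi(r)$ for every $y$.

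\emph{Connectedness.} Suppose $X=U\cup V$ with $U,V$ disjoint, open and nonempty, and put $f=\mathbf 1_U\in L^p_{\mathrm{loc}}(X;m)$.
\begin{enumerate}[label=(\roman*)]
\item Choose $x\in X$ and $s>0$ such that $B(x,s)$ meets both $U$ and $V$. By full support, $m(B(x,s)\cap U)>0$ and $m(B(x,s)\cap V)>0$, so $f$ is not a.e.\ constant on $B(x,s)$ and $M_f(x,s)>0$.
\item Let $K$ be the closure of $B(x,(A_M+1)s)$, which is compact by properness. In $K$, the sets $U\cap K$ and $V\cap K$ are complementary relatively open sets, hence disjoint compact sets, so their distance $\delta$ is positive.
\item For $r<\min(s,\delta/2)$ and $y\in B(x,A_Ms)$, the ball $B(y,r)$ lies in $K$ and cannot contain points of both $U$ and $V$. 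Hence $f$ is constant on $B(y,r)$ and $M_f(y,r)=0$.
\item The right-hand side of \eqref{eq:mPI} therefore vanishes while the left-hand side is positive, a contradiction. So $X$ is connected.
\end{enumerate}

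\emph{Lower scaling.} Fix $x\in X$ and $0<r<R$, with $R\le\diam X$ if $X$ is bounded; the case $r=R$ is trivial.
\begin{enumerate}[label=(\roman*)]
\item Since $\sup_y d(x,y)\ge \diam X/2\ge R/2$, connectedness and continuity of $d(x,\cdot)$ give a point $z$ with $d(x,z)=R/4$.
\item Take $f=d(x,\cdot)$, which is $1$-Lipschitz. The upper bound above gives $M_f(y,r)\le (2r)^p/\Psi(r)$ for all $y$.
\item For the lower bound, $f<R/16$ on $B(x,R/16)$ and $f>3R/16$ on $B(z,R/16)\subset B(x,R)$, so $|f(z')-f(w')|\ge R/8$ for $z'\in B(x,R/16)$, $w'\in B(z,R/16)$. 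By \eqref{eq:VD}, both balls have measure at least $c(C_D)V(x,R)$, since $B(x,R)\subset B(z,2R)$. Using $\avint_B|f-f_B|^p\,dm\ge 2^{-p}\avint_B\avint_B|f(z')-f(w')|^p\,dm\,dm$ and restricting the double integral to these two balls gives $\avint_{B(x,R)}|f-f_{B(x,R)}|^p\,dm\ge cR^p$.
\item Thus $M_f(x,R)\ge cR^p/\Psi(R)$. Applying \eqref{eq:mPI} with $s=R$ yields $cR^p/\Psi(R)\le C_M(2r)^p/\Psi(r)$, which rearranges to \eqref{eq:automatic-LS}.
\end{enumerate}

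The only delicate point is the lower bound for the oscillation at scale $R$. This is where connectedness is indispensable: it guarantees points at the intermediate distance $R/4$ from $x$, and without it $\Psi$ cannot be constrained. This is also why the restriction $R\le\diam X$ appears. Doubling of $\Psi$ is never used.
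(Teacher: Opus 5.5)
Your proof is correct and follows essentially the same route as the paper: for connectedness, an indicator of a clopen set is combined with the positive distance between the two pieces inside a compact ball; for $\mathrm{LS}(p)$, a $1$-Lipschitz test function is bounded above by the Lipschitz estimate at scale $r$ and below by two separated balls at scale $R$. The differences are cosmetic: you use $\mathbf 1_U$ and $d(x,\cdot)$ directly, which is legitimate because \eqref{eq:mPI} is stated for $L^p_{\mathrm{loc}}$ functions, where the paper uses compactly supported truncations in $L^p$; and you treat the endpoint $R=\diam X$ directly rather than by a continuity limit in $R$.
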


Under the hypotheses of Theorem~\ref{thm:main}, the domain of the given energy form is identified by~\eqref{eq:domain}, with the energy comparison~\eqref{eq:energy-equiv}.  This is the characterization of Shimizu~\cite[Corollaries~1.14 and~1.15]{ShimizuBesov}; its applicability under the present hypotheses is discussed in Subsection~\ref{subsec:domain-id}.

\subsection{Converse construction}

We now assume only the metric-measure hypotheses and construct an energy
from \eqref{eq:mPI}.  The construction is subsequential and does not assert
uniqueness of the resulting form.

\begin{theorem}\label{thm:converse}
Assume the standing metric-measure hypotheses, \eqref{eq:VD},
\eqref{eq:Psi-doubling}, and \eqref{eq:mPI}.  Fix admissible
constants $C_M$ and $A_M$ in \eqref{eq:mPI}.
Suppose in addition that
\begin{equation}\label{eq:Core}
 \B_\Psi\cap C_c(X)\quad\text{is dense in }(C_c(X),\|\cdot\|_\infty).
\end{equation}
Then there exist a sequence $r_n\downarrow0$, a functional
$\widetilde{\E}_{\KS}$ on $\B_\Psi$, and a regular strongly local
$p$-energy form $(\E_{\KS},\F_{\KS})$ such that
\begin{equation}\label{eq:KS-energy}
 \widetilde{\E}_{\KS}(f)=\lim_{n\to\infty}J_{r_n}(f)
 \asymp\sup_{r>0}J_r(f),\qquad f\in\B_\Psi,
\end{equation}
and
\[
 \F_{\KS}=\overline{\B_\Psi\cap C_c(X)}^{\,\|\cdot\|_{\B_\Psi}}.
\]
Here $\E_{\KS}$ is the restriction of $\widetilde{\E}_{\KS}$ to
$\F_{\KS}$.  The form satisfies the full $p$-Clarkson inequalities and the generalized
$p$-contraction property.  Its canonical
$p$-energy measure is denoted by $\GpKS(u)$ for $u\in\F_{\KS}$.  All
constants in this theorem depend only on
\[
 p,\ C_D,\ C_\Psi,\ C_M,\ A_M.
\]
Moreover, with $A:=A_M+2$,
\begin{equation}\label{eq:converse-PI}
 \int_{B(x,R)}|u-u_{B(x,R)}|^p\,dm
 \le C\Psi(R)\GpKS(u)(B(x,AR))
\end{equation}
for every $u\in\F_{\KS}$, $x\in X$, and $R>0$.
\end{theorem}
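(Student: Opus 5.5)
The plan has three stages. First construct the form by the Korevaar--Schoen machinery, using Theorem~\ref{thm:two-scale}. Then identify its properties. Finally derive \eqref{eq:converse-PI} from \eqref{eq:mPI}, which is the genuinely new part.

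\textbf{Construction.} By Theorem~\ref{thm:two-scale}, \eqref{eq:J-WM} gives for every $f\in\B_\Psi$
\[
\sup_{r>0}J_r(f)\le C\liminf_{r\downarrow0}J_r(f).
\]
So $J_r(f)\asymp\sup_t J_t(f)$ uniformly in $r$, and this is exactly the weak monotonicity hypothesis of the Korevaar--Schoen theory \cite{KajinoShimizuKS,Sasaya2026}.
\begin{enumerate}[label=(\roman*)]
\item Since $X$ is separable and proper, $\B_\Psi$ is separable in a suitable sense. A diagonal argument over a countable family of functions dense in $\B_\Psi\cap C_c(X)$ gives $r_n\downarrow0$ such that $\lim_n J_{r_n}(f)$ exists on that family.
\item Using $|J_r(f)^{1/p}-J_r(g)^{1/p}|\le J_r(f-g)^{1/p}\lesssim\|f-g\|_{\B_\Psi}$, the limit extends to all of $\B_\Psi$. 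Alternatively, one can pass to a Banach limit along $r_n$, as in \cite{KajinoShimizuKS}.
\item Each $J_r$ satisfies the Clarkson inequalities and generalized $p$-contraction pointwise in $(x,y)$, and these pass to the limit.
\item Closedness of $\F_{\KS}$ follows from the comparability $\widetilde\E_{\KS}\asymp\sup_r J_r$.
\item Regularity comes from the density hypothesis \eqref{eq:Core}.
\item Strong locality follows from the fact that $J_r$ only sees pairs with $d(x,y)<r$. For $f,g$ with compact supports at positive distance, or with $g$ constant near $\suppm f$, the cross terms vanish once $r$ is small.
\end{enumerate}
The energy measure $\GpKS(u)$ is then provided by \cite[Theorem~2.4]{Sasaya2026}. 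For nice $u$ it should be identified as the weak limit of the localized functionals
\[
\Gamma_r(u)(dx):=\Psi(r)^{-1}\avint_{B(x,r)}|u(x)-u(y)|^p\,dm(y)\,dm(x),
\]
at least in the sense that $\GpKS(u)(U)\le\liminf_n\Gamma_{r_n}(u)(U)$ and $\GpKS(u)(K)\ge\limsup_n\Gamma_{r_n}(u)(K)$ for open $U$ and compact $K$.

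\textbf{Poincar\'e inequality.} Fix $B=B(x,R)$ and $u\in\F_{\KS}$. Apply \eqref{eq:mPI} with $s=R$ and $r=r_n<R$:
\[
\frac1{\Psi(R)}\avint_B|u-u_B|^p\,dm\le C_M\avint_{B(x,A_MR)}M_u(y,r_n)\,dm(y).
\]
Next bound $M_u(y,r_n)$ by a Korevaar--Schoen integrand. By Jensen,
\[
M_u(y,r)\le\frac1{\Psi(r)}\avint_{B(y,r)}\avint_{B(y,r)}|u(z)-u(w)|^p\,dm(w)\,dm(z).
\]
For $z,w\in B(y,r)$ we have $w\in B(z,2r)$, and doubling gives $V(y,r)\asymp V(z,2r)$. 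Applying Fubini to $\int_{B(x,A_MR)}\cdots\,dm(y)$, the points $z$ range over $B(x,(A_M+1)R)$. This yields
\[
\int_{B(x,A_MR)}M_u(y,r)\,dm(y)\lesssim\frac1{\Psi(2r)}\int_{B(x,(A_M+1)R)}\avint_{B(z,2r)}|u(z)-u(w)|^p\,dm(w)\,dm(z),
\]
where $\Psi(2r)\le C_\Psi\Psi(r)$ is used. Multiplying by $V(x,A_MR)\asymp V(x,R)$ and $\Psi(R)$, we obtain
\[
\int_B|u-u_B|^p\,dm\le C\,\Psi(R)\,\Gamma_{2r_n}(u)\bigl(B(x,(A_M+1)R)\bigr).
\]

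\textbf{Main obstacle.} The difficulty is passing $n\to\infty$ to reach $\GpKS(u)(B(x,AR))$. Two issues arise: the limit is taken at radii $2r_n$ rather than $r_n$, and localized limits must be matched with the abstract energy measure. I would handle both with cutoffs.
\begin{enumerate}[label=(\alph*)]
\item Choose $\varphi\in\B_\Psi\cap C_c(X)$ with $0\le\varphi\le1$, $\varphi=1$ on $B(x,(A_M+1)R)$ and $\suppm\varphi\subset B(x,(A_M+2)R)$. Such a $\varphi$ exists by \eqref{eq:Core} and composition with a Lipschitz truncation.
\item For small $r$, estimate $\Gamma_{2r}(u)(B(x,(A_M+1)R))\le\widetilde J_{2r}(\varphi\text{-localized }u)$.
\item Replace $2r_n$ by $r_n$ via the localized two-scale comparison. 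Alternatively, first reduce to $u\in\B_\Psi\cap C_c(X)$ by density, where one extracts a further subsequence so that the measures $\Gamma_{2r_n}(u)$ converge vaguely to a measure comparable to $\GpKS(u)$.
\item Conclude with the upper semicontinuity on closed sets $\overline{B(x,(A_M+1)R)}\subset B(x,AR)$, which accounts for the $+2$ in $A=A_M+2$.
\item Extend to general $u\in\F_{\KS}$ by density in the $\|\cdot\|_{\B_\Psi}$ norm, using \ref{G2}-type seminorm continuity of $u\mapsto\GpKS(u)(B(x,AR))^{1/p}$.
\end{enumerate}
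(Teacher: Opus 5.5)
Your reduction follows the paper's proof. You apply \eqref{eq:mPI}, then Jensen and a kernel comparison (this is Lemma~\ref{lem:localized-comparison}), then a cutoff, a limit along $r_n$, and a final density argument via the seminorm property of $\GpKS$. The radius mismatch is not a real obstacle: \eqref{eq:mPI} holds for every $r<s$, so apply it at $r=r_n/2$, as the paper does, and $J_{r_n}$ appears exactly. Your cutoff $\varphi\in\B_\Psi\cap C_c(X)$, built from \eqref{eq:Core} and a Lipschitz truncation, is also fine.

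\textbf{The gap.} The step you only say ``should'' hold is never justified: that along the construction sequence $\int\varphi\,d\Gamma_{r_n}(u)\to\int\varphi\,d\GpKS(u)$, or at least $\limsup_n\Gamma_{r_n}(u)(K)\le C\,\GpKS(u)(U)$ for compact $K\subset U$. This is the load-bearing input, and nothing in your proposal supplies it.
\begin{itemize}
\item The canonical measure of \cite[Theorem~2.4]{Sasaya2026} is built intrinsically from the limit form $\E_{\KS}$, not as a limit of your nonlocal densities.
\item Convergence of the total masses $J_{r_n}(f)\to\widetilde{\E}_{\KS}(f)$ does not by itself identify the limits of the localized masses $\Gamma_{r_n}(u)(E)$.
\item Cutoffs only localize; they do not perform this identification.
\end{itemize}
In the paper this identification is exactly \cite[Theorem~6.4.2, formula~(6.4.7)]{KajinoShimizuKS}. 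It is valid along the same $r_n$, for $u\in\B_\Psi\cap C_c(X)$ and test functions in $\B_\Psi\cap C_c(X)$, and is combined with \cite[Theorem~8.3 and Corollary~8.4]{Sasaya2026}. This is also why the reduction to $u\in\B_\Psi\cap C_c(X)$ is forced rather than one option among several. Your fallback has the same hole: if you extract a $u$-dependent subsequence with $\Gamma_{2r_n}(u)\to\mu$ vaguely, you still need $\mu\le C\,\GpKS(u)$ on open sets, and you give no reason for it.

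\textbf{A smaller point in the construction.} Step (ii) does not produce a limit on all of $\B_\Psi$. The bound $|J_r(f)^{1/p}-J_r(g)^{1/p}|\lesssim\|f-g\|_{\B_\Psi}$ only propagates convergence to the $\|\cdot\|_{\B_\Psi}$-closure of your countable family. That closure is at most $\F_{\KS}$, since $\B_\Psi\cap C_c(X)$ is not known to be dense in $\B_\Psi$ under these hypotheses. Separability of $\B_\Psi\cap C_c(X)$ in that norm is also not justified by separability of $X$. The paper takes the existence of $r_n$ and of $\widetilde{\E}_{\KS}$ on $\B_\Psi$ directly from \cite[Theorem~8.3]{Sasaya2026}, and you should do the same rather than rederive it.
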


The existence of the form follows from Sasaya's Korevaar--Schoen
construction \cite[Theorem~8.3]{Sasaya2026}, based on
\cite[Theorems~6.3.8 and~6.4.2]{KajinoShimizuKS}.
The new conclusion is the Poincar\'e inequality~\eqref{eq:converse-PI},
which we recover from~\eqref{eq:mPI} in Section~\ref{sec:converse}.
No capacity upper bound $\mathrm{cap}(\Psi)_{\le}$ is asserted for the
reconstructed form; see Remark~\ref{rem:converse-capacity}.

\subsection{Weak-type characterizations}

For $f\in L^p(X;m)$ choose a Borel representative and define, off the
diagonal,
\begin{equation}\label{eq:pairwise-quotients}
 K_{\Psi,f}(x,y)
 :=\frac{|f(x)-f(y)|^p}
 {\Psi(d(x,y))V(x,d(x,y))},
\end{equation}
and set $K_{\Psi,f}=0$ on the diagonal.  Its $m\otimes m$-equivalence class and the quantities below are independent
of the representative: a change on an $m$-null set $N$ affects only
$(N\times X)\cup(X\times N)$.  Put
\begin{align}
 \mathfrak W_\Psi(f)
 &:=\|K_{\Psi,f}\|_{L^{1,\infty}(m\otimes m)}
 =\sup_{\lambda>0}\lambda\,
 (m\otimes m)(\{K_{\Psi,f}>\lambda\}),                 \label{eq:pairwise-W}\\
 \mathfrak W_{\Psi,\infty}(f)
 &:=\limsup_{\lambda\to\infty}\lambda\,
 (m\otimes m)(\{K_{\Psi,f}>\lambda\}).                 \label{eq:pairwise-tail}
\end{align}
Under \eqref{eq:VD}, if $R=d(x,y)>0$, then
$C_D^{-1}V(x,R)\le V(y,R)\le C_DV(x,R)$.  Thus replacing the
first-coordinate volume in \eqref{eq:pairwise-quotients} by the
second-coordinate volume changes the functionals by factors depending only
on $C_D$.

\begin{theorem}\label{thm:pairwise-weak}
Suppose that Assumption~\ref{ass:E}, \eqref{eq:VD}, \eqref{eq:PI}, and
\eqref{eq:cap} hold.  Then, for every $f\in L^p(X;m)$,
\begin{equation}\label{eq:pairwise-domain}
 f\in\F
 \quad\Longleftrightarrow\quad
 \mathfrak W_{\Psi,\infty}(f)<\infty
 \quad\Longleftrightarrow\quad
 \mathfrak W_\Psi(f)<\infty.
\end{equation}
Moreover, for $f\in\F$,
\begin{equation}\label{eq:pairwise-energy}
 \E(f)\asymp
 \mathfrak W_{\Psi,\infty}(f)
 \asymp\mathfrak W_\Psi(f).
\end{equation}
The constants depend only on the structural data.
\end{theorem}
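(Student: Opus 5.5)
The plan is to prove the three comparisons through a cycle:
\[
 \mathfrak W_{\Psi,\infty}(f)\le\mathfrak W_\Psi(f)\lesssim\E(f)\quad(f\in\F),
 \qquad
 \mathfrak W_{\Psi,\infty}(f)<\infty\ \Longrightarrow\ f\in\F,\ \E(f)\lesssim\mathfrak W_{\Psi,\infty}(f).
\]
The first inequality is trivial. The other two directions use different tools: Poincar\'e for the upper bound, and \eqref{eq:mPI} together with the domain identification of Subsection~\ref{subsec:domain-id} for the lower bound.

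\emph{Upper bound $\mathfrak W_\Psi(f)\lesssim\E(f)$.} Fix $\lambda>0$ and decompose dyadically in the distance: $E_k=\{(x,y):2^{k-1}\le d(x,y)<2^k\}$. On $E_k$ we have
\[
 K_{\Psi,f}(x,y)\asymp \frac{|f(x)-f(y)|^p}{\Psi(2^k)V(x,2^k)}.
\]
Write $|f(x)-f(y)|\le|f(x)-f_{B_x}|+|f(y)-f_{B_x}|$ with $B_x=B(x,2^{k+1})$, which reduces the problem to level sets of the one-point quantity
\[
 g_k(x,y):=\frac{|f(y)-f_{B_x}|^p}{\Psi(2^k)V(x,2^k)}.
\]
Rather than summing over $k$ (which would lose a logarithm), I would use a telescoping/maximal-function argument. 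Using Lebesgue points and chains of balls $B(x,2^{-j})$, $j\ge -k$, together with \ref{eq:PI}, bound
\[
 |f(x)-f_{B(x,2^k)}|^p\lesssim \Psi(2^k)\,\mathcal M_k(x),
 \qquad
 \mathcal M_k(x):=\sup_{r\le 2^k}\frac{\Gp(f)(B(x,A_Pr))}{V(x,r)}\cdot(\text{geometric weights}).
\]
The weights are summable precisely because of the lower scaling $\mathrm{LS}(p)$ from Proposition~\ref{prop:automatic-LS} and doubling of $\Psi$. Then $\{K>\lambda\}\cap E_k$ is contained in pairs with $d(x,y)<2^k$ and $\Psi(2^k)V(x,2^k)\lambda\lesssim\Psi(2^k)\mathcal M(x)$, where $\mathcal M$ is a Hardy--Littlewood-type maximal function of the measure $\Gp(f)$ with respect to $m$. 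Summing over $k$ in $y$ first gives
\[
 \sum_k V(x,2^k)\,\mathbf 1\{V(x,2^k)\lesssim \mathcal M(x)/\lambda\}\lesssim \mathcal M(x)/\lambda,
\]
by geometric growth of $V$ in $k$, which follows from \ref{eq:VD} plus connectedness (reverse doubling). The weak $(1,1)$ bound for the maximal function of a Radon measure on a doubling space then gives measure $\lesssim\Gp(f)(X)/\lambda=\E(f)/\lambda$. I expect this to be the main obstacle: making the pointwise chain estimate work with a singular $\Gp(f)$ and avoiding the double counting. If the direct route fails, I would replace the pointwise estimate by a covering argument. Fix $k$ and cover $X$ by balls of radius $2^k$, then use Chebyshev with \ref{eq:PI} on each ball and a Vitali-type selection of the balls where the energy density exceeds $\lambda$.

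\emph{Lower bound.} Assume $\mathfrak W_{\Psi,\infty}(f)=:W<\infty$, so that $(m\otimes m)(\{K>\lambda\})\le 2W/\lambda$ for $\lambda\ge\lambda_0$. The aim is to show $\liminf_{r\downarrow0}J_r(f)\lesssim W$, since the domain identification \eqref{eq:domain}, \eqref{eq:energy-equiv} then yields $f\in\F$ and $\E(f)\lesssim W$. A direct layer-cake bound on $J_r$ does not close, because $J_r$ is a strong-type quantity. This is where \eqref{eq:mPI} enters, in the spirit of Poliakovsky and Hyt\"onen--Yang--Yuan--Zhao. Fix $s$ and a small $r$ with $r\ll s$. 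On a ball $B(x,r)$, split pairs according to whether $K>\lambda$ with $\lambda:=c\,M_f(x,r)$. Since
\[
 M_f(x,r)\lesssim \avint_{B(x,r)}\avint_{B(x,r)}\frac{|f(y)-f(z)|^p}{\Psi(r)}\,dm\,dm,
\]
at least half of this double average comes from pairs with $K\gtrsim M_f(x,r)$. One obtains a weak-type inequality of the form
\[
 m\bigl(\{x: M_f(x,r)>t\}\bigr)\lesssim \frac{1}{t}\,\lambda\,(m\otimes m)\bigl(\{K>\lambda\}\cap\{d<2r\}\bigr)\Big|_{\lambda\asymp t},
\]
which for $t\ge\lambda_0$ is $\lesssim W/t$. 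Integrating \eqref{eq:mPI} over $x$ in a small-scale form and choosing $r\downarrow0$ along a sequence converts this weak control into strong control at scale $s$: the averaging in \eqref{eq:mPI} over $B(x,A_Ms)$ together with Kolmogorov/truncation at a level $t$ tied to $s$ (so that high levels are governed by $W$ and low levels cost $t\,m(\cdot)$, which vanishes as the cutoff is tuned) bounds $J_s(f)\lesssim W$ uniformly in $s$. Theorem~\ref{thm:two-scale} then gives $\sup_sJ_s(f)\lesssim W$. I would first try to establish this truncation step for bounded $f$ and then remove the restriction by normal contraction $T_N(f)=(f\wedge N)\vee(-N)$, since $\mathfrak W$ does not increase under contractions.
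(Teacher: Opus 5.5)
\textbf{Upper bound.} Your upper bound does not close at its last step. After summing in $y$, you have bounded $(m\otimes m)(\{K_{\Psi,f}>\lambda\})$ by $\lambda^{-1}\int_X\mathcal M\,dm$. The weak-$(1,1)$ inequality $m(\{\mathcal M>t\})\lesssim\E(f)/t$ says nothing about $\int_X\mathcal M\,dm$, and this integral is in general infinite, for instance whenever $X$ is unbounded and $\Gp(f)\ne0$.

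In the Euclidean case with $p>1$, this step is saved by $L^p$-boundedness of the maximal operator applied to $|\nabla f|$. Here $\Gp(f)$ may be singular and has no density, so you are effectively in the $p=1$ regime, where a covering argument is unavoidable. If you keep the truncation $r\le 2^k$ and the weights in $\mathcal M_k$, the sum over $k$ no longer collapses to a single maximal function; what remains is a multi-scale covering problem. Your fallback, a Chebyshev-plus-$\mathrm{PI}(\Psi)$ covering at each fixed distance $2^k$, gives $\lesssim\E(f)/\lambda$ per dyadic scale. It therefore reintroduces exactly the divergent sum over $k$ you wanted to avoid.

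The missing idea (Proposition~\ref{prop:pairwise-upper}) has two parts. First, extract from the weighted telescoping sum $\mathcal S_f(x,R)$ an index $j$ with $\Gp(f)(B(x,A2^{-j}R))>c_1\lambda2^{j\delta}V(x,2^{-j}R)V(x,R)$. Second, for fixed $j$, run one Vitali covering in $X\times X$ over all distances $R$ simultaneously, using the anisotropic metric $\rho_j$ whose balls are $B(x,A2^{-j}R)\times B(x,C_0R)$. Disjointness of the selected product balls forces disjointness of their first-coordinate balls, via the diagonal point. Hence the $\Gp(f)$-masses sum to at most $\E(f)$, and the factor $2^{-j\delta}$ makes the sum over $j$ converge.

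\textbf{Lower bound.} The lower bound has a similar gap at the weak-to-strong conversion. Your threshold $\lambda=cM_f(x,r)$ is missing a factor $V(x,r)^{-1}$. More seriously, knowing that most of the double average on $B(x,r)^2$ comes from pairs with large $K_{\Psi,f}$ bounds an integral of $|f(y)-f(z)|^p$ over $\{K_{\Psi,f}>\lambda\}$, not the $m\otimes m$-measure of that set. So the claimed weak-type bound for $M_f(\cdot,r)$ does not follow.

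Even granting $m(\{M_f(\cdot,r)>t\})\lesssim W/t$ for $t\ge\lambda_0$, recovering $\mathcal O_r(f)=\int_XM_f(x,r)\,dm(x)$ up to a cap $T$ costs $W\log(T/\lambda_0)$. Since \eqref{eq:mPI} only averages $L^1$ quantities, it cannot remove this logarithm. Kolmogorov's inequality does not help either, since it controls only $L^q$ norms with $q<1$.

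The paper removes the logarithm by averaging over scales rather than levels (Lemma~\ref{lem:pairwise-extraction}). For $g=f_k$ and a ball $U$, it integrates $J_r(g;U)$ against $d\Psi(r)/\Psi(r)$ on $(\varepsilon,\delta]$. By Tonelli, this equals the integral of a capped kernel $L_{\varepsilon,\delta}$ with $0\le L_{\varepsilon,\delta}\le K_{\Psi,f}$ and $\|L_{\varepsilon,\delta}\|_\infty\lesssim(\Psi(\varepsilon)\varepsilon^{Q_D})^{-1}$. The layer-cake logarithm $b\log^+(M_\varepsilon/\lambda_0)$ is then comparable, via $\mathrm{LS}(p)$ from Corollary~\ref{cor:forward-LS}, to the length $\log(\Psi(\delta)/\Psi(\varepsilon))$ of the scale window. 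This yields $\liminf_{r\downarrow0}J_r(f_k;U)\lesssim\mathfrak W_{\Psi,\infty}(f)$ on every ball. Only after that do \eqref{eq:mPI}, Fatou's lemma, and the domain identification \eqref{eq:domain}--\eqref{eq:energy-equiv} enter (Lemma~\ref{lem:local-global}), essentially as in your outline.
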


\begin{remark}\label{rem:pairwise-regimes}
Under the hypotheses of Theorem~\ref{thm:pairwise-weak}, suppose that
$V(x,r)\asymp r^Q$ and $\Psi(r)\asymp r^\alpha$ for
$0<r\le\diam X$, with the upper restriction omitted when
$\diam X=\infty$.  Then
Corollary~\ref{cor:forward-LS} forces $\alpha\ge p$.
For $X=\mathbb R^Q$ and
$\alpha=p$, \eqref{eq:pairwise-energy} is the Brezis--Van Schaftingen--Yung formula, up to the
normalization of ball volume \cite{BVSY2021,BSSVY2024}.  See also
\cite[Theorem~1.3]{DominguezEtAl2023} for Besov--Sobolev
characterizations, including the weak-type endpoint with Lorentz index~$\infty$.
\end{remark}

\subsection{Examples}\label{sec:examples}

We now describe several classes of spaces satisfying Assumption~\ref{ass:E}
together with \eqref{eq:VD}, \eqref{eq:PI}, and \eqref{eq:cap}.
In each case, the scale function is understood up to multiplication by
 constants.

For the Riemannian, Carnot, tree, and Laakso energies below, linearity of
the gradient operators and the classical scalar or Hilbert-valued $L^p$
Clarkson inequalities give~\ref{E2}.
The chain and Leibniz rules give normal contractions and~\ref{E3},
while the energy-measure formulas give~\ref{G1}--\ref{G2}.  Locality,
regularity, and strong locality are justified in the individual examples.

\paragraph{Riemannian and sub-Riemannian spaces.}
Let $(X,d,m)$ be complete, essentially nonbranching and infinitesimally Hilbertian, and satisfy the measure contraction property $\mathrm{MCP}(0,N)$ for some $1<N<\infty$. For $1<p<\infty$, set
\[
 \F=W^{1,p}(X),\qquad
 \E_p(f)=\int_X|Df|^p\,dm,\qquad
 \Gp(f)=|Df|^p m,
\]
where $|Df|$ denotes the minimal $p$-weak upper gradient. The MCP condition gives volume doubling and, under essential nonbranching, a scale-invariant weak $(1,1)$-Poincar\'e inequality; see~\cite[Section~7]{HanMilman2021}. Hence~\eqref{eq:PI} holds with $\Psi(r)=r^p$, and Lipschitz distance cutoffs give~\eqref{eq:cap}. The weak gradient is independent of $p$, and the Hilbertian differential gives the Clarkson inequalities~\cite[Theorem~3.3 and Proposition~4.4]{GigliNobili2022}; Sobolev locality and Lipschitz density give the remaining parts of Assumption~\ref{ass:E}~\cite{HKST}. This class includes complete Riemannian manifolds with nonnegative Ricci curvature and Riemannian volume, and complete Sasakian manifolds with nonnegative Tanaka--Webster Ricci curvature, equipped with the Carnot--Carath\'eodory metric and Popp measure~\cite[Theorem~1.1]{Lee2015}, including Heisenberg groups and standard CR spheres~\cite{LeeLiZelenko2016}. Sub-Riemannian manifolds are infinitesimally Hilbertian~\cite{LeDonneLucicPasqualetto2023}, and $|Df|$ is the horizontal gradient norm for smooth reference measures. The same standing assumptions also hold on complete doubling Riemannian manifolds supporting the scale-invariant $L^p$-Poincar\'e inequality, including compact Riemannian manifolds~\cite{SaloffCoste2001}, and on arbitrary Carnot groups with their Carnot--Carath\'eodory metric and Haar measure~\cite{Jerison1986}.

\paragraph{Post-critically finite self-similar sets.}
Let $\mathcal L=(K,S,\{F_i\}_{i\in S})$ be a connected p.c.f. self-similar
structure carrying a regular strongly local self-similar $p$-resistance form
$(\E_p,\F_p)$ with weights $(\rho_{p,i})_{i\in S}\in(1,\infty)^S$.  Set
\[
 \widehat R_p(x,y)=R_{\E_p}(x,y)^{1/(p-1)},
 \qquad
 \sum_{i\in S}\rho_{p,i}^{-d_{f,p}/(p-1)}=1,
\]
where $R_{\E_p}$ denotes the effective $p$-resistance, and let $m$ be the
self-similar probability measure with weights
$\rho_{p,i}^{-d_{f,p}/(p-1)}$.  Then $m$ is $d_{f,p}$-Ahlfors regular with
respect to $\widehat R_p$, and the standing Poincar\'e and capacity
assumptions hold with
\[
 \Psi(r)=r^{d_{f,p}+p-1},
\]
as follows from \cite[Assumption~6.6.1, Lemma~6.6.8, Corollary~6.6.11,
and Propositions~6.6.9, 6.6.12, and~6.6.14]{KajinoShimizuKS}.  The associated
canonical energy measures satisfy \ref{G1}--\ref{G3}, while the open-set
locality property \ref{G4} follows from
\cite[Lemma~5.19]{KajinoShimizuContraction}.  The generalized
$p$-contraction property yields the remaining Clarkson, normal-contraction,
and algebra properties by
\cite[Proposition~2.3]{KajinoShimizuContraction}.  For the cited self-similar
constructions, strong locality follows from
\cite[Proposition~5.24]{KajinoShimizuContraction}; in the Cao--Gu--Qiu
construction it is proved in \cite[Theorem~5.2]{CaoGuQiu2022}.  Such
forms exist on strongly symmetric p.c.f. structures, including affine
nested fractals and the standard Sierpi\'nski gasket; see
\cite[Theorem~8.43, Proposition~8.49, and Theorems~8.50--8.51]{KajinoShimizuContraction}
and
\cite[Theorems~4.2, 5.1, 5.2, and~6.3]{CaoGuQiu2022}.  This statement uses the $p$-resistance metric and the corresponding
self-similar measure.  It does not assert the same power scale for an
arbitrary ambient Euclidean metric.

\paragraph{The planar Sierpi\'nski carpet.}
For every $p\in(1,\infty)$, Murugan and Shimizu construct on the standard
planar carpet, with its Euclidean metric and uniform self-similar measure,
a regular strongly local self-similar $p$-energy and its energy measures,
which satisfy \ref{G1}--\ref{G3}; see
\cite[Theorems~1.1 and~1.2]{MuruganShimizuCarpet}, building on the
construction of $p$-energies and energy measures on carpets in
\cite{Shimizu2024}.  If $\rho(p)$ is
their renormalization constant, then the standing Poincar\'e and capacity
estimates hold with
\[
 \Psi(r)=r^{d_w(p)},
 \qquad d_w(p)=\frac{\log(8\rho(p))}{\log 3};
\]
see \cite[Theorem~1.5]{MuruganShimizuCarpet}.  The generalized
$p$-contraction property, and hence the required Clarkson,
normal-contraction, and algebra properties, is established in
\cite[Remark~8.20]{KajinoShimizuContraction}; the open-set locality
property \ref{G4} follows from
\cite[Lemma~5.19]{KajinoShimizuContraction}.

\paragraph{Metric trees and cable systems.}
Let $(T,d)$ be a complete real tree, let $\nu$ be its length measure, and let
$m$ be a full-support Radon measure such that
\[
 m(B(x,r))\asymp\Phi(r),\qquad x\in T,\ r>0,
\]
where $\Phi$ is a doubling homeomorphism satisfying, for some $\alpha\ge1$,
\[
 c\frac Rr\le\frac{\Phi(R)}{\Phi(r)}
 \le C\left(\frac Rr\right)^\alpha,
 \qquad 0<r\le R.
\]
Let $\F_p$ be the space of functions in $L^p(T;m)$ that are absolutely
continuous along compact geodesic arcs and whose arc derivative $\partial f$
belongs to $L^p(T;\nu)$, and consider the length-gradient energy
\[
 \E_p(f)=\int_T|\partial f|^p\,d\nu,
 \qquad \Gp(f)=|\partial f|^p\nu.
\]
Then the standing assumptions hold with
\[
 \Psi(r)=r^{p-1}\Phi(r).
\]
Locality of arc derivatives gives~\ref{G3}--\ref{G4}; compactly supported
arcwise absolutely continuous functions provide a regular core, and the
energy is strongly local.
The geodesic Morrey estimate gives $\mathrm{PI}(\Psi)$, while tree-adapted
cutoffs have energy $\lesssim r^{1-p}\asymp V(x,r)/\Psi(r)$; see
\cite[Lemma~2.10, Definitions~2.11 and~2.13, Theorem~2.12,
Propositions~2.14--2.15, Corollary~2.16, Lemma~3.3, Theorem~3.5, and
Proposition~3.6]{BaudoinChenYangTrees}.  The global volume setting is
treated in \cite[Section~5]{BaudoinChenYangTrees}.  Yang's scale-irregular
real trees give explicit unbounded instances with
$V(x,r)\asymp V_b(r)$ and $\Psi(r)=r^{p-1}V_b(r)$; see
\cite[Section~4, the paragraph preceding Lemma~4.1, Lemma~4.2, and
Propositions~4.3--4.4]{Yang2026}.  Finite connected
metric graphs, regarded as cable systems with length measure and edgewise
$p$-energy, give compact examples with $\Psi(r)=r^p$.  The same edgewise
calculus verifies Assumption~\ref{ass:E}.  Discrete edge forms
on simplicial trees are not strongly local and hence do not satisfy the
standing assumptions; the cable realization is the appropriate model in this setting.

\paragraph{Laakso-type spaces with prescribed scale functions.}
Let $\Phi$ and $\Psi$ be doubling
homeomorphisms satisfying
\[
 C^{-1}\left(\frac Rr\right)^p
 \le\frac{\Psi(R)}{\Psi(r)}
 \le C\left(\frac Rr\right)^{p-1}\frac{\Phi(R)}{\Phi(r)},
 \qquad 0<r\le R.
\]

Yang~\cite[Theorem~2.3]{Yang2026} constructs an unbounded geodesic
Laakso-type metric measure space with $V(x,r)\asymp\Phi(r)$ and a regular
strongly local $p$-energy satisfying $\mathrm{PI}(\Psi)$ and the cutoff
Sobolev inequality at scale $\Psi$; the latter implies
$\mathrm{cap}(\Psi)_{\le}$.  These constructions also allow scale functions other than powers.  By \cite[Lemma~6.1]{Yang2026}, the form and its energy measure
have the gradient representations
$\E_p(f)=\int|\nabla_{\mathcal L}f|^p\,d\lambda$ and
$\Gp(f)=|\nabla_{\mathcal L}f|^p\lambda$, with $\lambda$ and
$\nabla_{\mathcal L}$ as in that construction.  The local-domain compatibility in
\cite[paragraph preceding Lemma~3.4]{Yang2026} gives~\ref{G4}.  In the
Ahlfors-regular specialization
$\Phi(r)=r^{d_h}$ and $\Psi(r)=r^{\beta_p}$, every exponent in the sharp
range $p\le\beta_p\le d_h+p-1$ occurs
\cite[Theorem~2.3 and Corollary~2.6]{Yang2026}.
Self-similar Laakso-type energies are also constructed in
\cite{AnttilaErikssonBiqueShimizu2026}; those constructions are distinct
from the prescribed-scale spaces used here.

\section{Macroscopic inequalities and their consequences}\label{sec:macroscopic}

We first prove Theorem~\ref{thm:main} by capacitary discrete approximation.  We then prove the two-scale comparison of the Korevaar--Schoen functionals (Theorem~\ref{thm:two-scale}) and the connectedness and lower scaling of~$\Psi$ (Proposition~\ref{prop:automatic-LS}), before recalling the essentially known identification of the energy domain.

\subsection{Packing and localization}\label{sec:prelim}

We first recall the following lemma.

\begin{lemma}\label{lem:packing}
Assume \eqref{eq:VD}.  For every $\lambda\ge1$ there exists
$N_D(\lambda)<\infty$, depending only on $C_D$ and $\lambda$, with the
following property.  If $\{x_i\}_{i\in I}$ is $t$-separated, i.e.
$d(x_i,x_j)\ge t$ for $i\ne j$, then
\[
 \#\{i:x_i\in B(x,\lambda t)\}\le N_D(\lambda)
\]
for every $x\in X$ and $t>0$.
\end{lemma}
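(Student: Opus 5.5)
The plan is the standard volume-comparison packing argument. Fix $x\in X$, $t>0$, $\lambda\ge1$, and let $F=\{i:x_i\in B(x,\lambda t)\}$. It suffices to bound $\#F'$ for every finite subset $F'\subset F$ by a constant depending only on $C_D$ and $\lambda$; this also rules out $F$ being infinite. The balls $B(x_i,t/2)$, $i\in F'$, are pairwise disjoint by $t$-separation and the triangle inequality. Each of them is contained in $B(x,(\lambda+1/2)t)\subset B(x,2\lambda t)$.

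The key step is a lower bound on each small ball's volume in terms of the big ball. For $i\in F'$ we have $d(x,x_i)<\lambda t$, so
\[
B(x,2\lambda t)\subset B(x_i,3\lambda t)=B\bigl(x_i,6\lambda\cdot\tfrac t2\bigr).
\]
Iterating \eqref{eq:VD} $\lceil\log_2(6\lambda)\rceil$ times gives
\[
V(x,2\lambda t)\le C_D(6\lambda)\,V(x_i,t/2).
\]

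Summing over the disjoint balls then yields
\[
\#F'\cdot\frac{V(x,2\lambda t)}{C_D(6\lambda)}\le\sum_{i\in F'}V(x_i,t/2)\le V(x,2\lambda t).
\]
Since $0<V(x,2\lambda t)<\infty$, we may divide and obtain $\#F'\le C_D(6\lambda)$. Hence we can take $N_D(\lambda):=C_D(6\lambda)=C_D^{\lceil\log_2(6\lambda)\rceil}$.

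There is no real obstacle here. The only points needing care are the finiteness and positivity of ball volumes, which is part of the standing hypotheses, and the handling of a possibly infinite index set, which is covered by passing to finite subsets.
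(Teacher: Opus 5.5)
Your proposal is correct and follows the same standard packing argument as the paper: the disjoint balls $B(x_i,t/2)$ lie in a fixed enlargement of $B(x,\lambda t)$, iterated doubling is applied from each center $x_i$, and the measures are summed. Using $B(x,2\lambda t)$ in place of $B(x,(\lambda+1/2)t)$ and passing to finite subfamilies are harmless refinements, and they give the explicit constant $N_D(\lambda)=C_D(6\lambda)$.
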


\begin{proof}
The balls $B(x_i,t/2)$ are disjoint and lie in
$B(x,(\lambda+1/2)t)$.  Iterated doubling compares the measure of this
ball with that of every $B(x_i,t/2)$, uniformly in $i$; summing the
disjoint measures proves the claim.  This is the standard packing
argument; see, e.g., \cite{Heinonen2001,HKST}.
\end{proof}

For $u\in\Floc$ and a relatively compact open set $U$, define
\begin{equation}\label{eq:local-Gamma}
 \left.\Gp(u)\right|_U:=\left.\Gp(u^\#)\right|_U,
\end{equation}
where $u^\#\in\F$ is any representative agreeing with $u$ on $U$.
Property \ref{G4} makes this definition independent of the representative
and yields a positive Radon measure on $X$.  This localization is also used for locally finite sums of cutoffs in
Yang~\cite[paragraph preceding Lemma~3.4]{Yang2026}.

We first record invariance under addition of constants.

\begin{lemma}\label{lem:constants}
Let $u\in\Floc$, let $U\Subset X$ be open, and let $c\in\mathbb R$.  Then
\[
 \left.\Gp(u+c)\right|_U=\left.\Gp(u)\right|_U.
\]
\end{lemma}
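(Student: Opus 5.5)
The plan is to reduce the statement to the global properties \ref{G3} and \ref{G4} by means of a suitable representative that is bounded near $U$. Since $U$ is relatively compact and $X$ is proper, the closure $\overline U$ is compact.

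We choose a relatively compact open set $W\supset\overline U$; for example, $W$ may be a finite union of balls covering $\overline U$. Since $u\in\Floc$, there is $u^\#\in\F$ with $u=u^\#$ $m$-a.e. on $W$.

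We next need a function $\phi\in\F\cap C_c(X)$ with $\phi=1$ on a neighborhood of $\overline U$. Regularity \ref{E1} provides such cutoffs in the standard way. One takes $g\in\F\cap C_c(X)$ with $\|g-\chi\|_\infty<1/4$, where $\chi$ is an Urysohn function equal to $1$ near $\overline U$ and supported in $W$. Then one applies the $1$-Lipschitz map $T(t)=\min(\max(2t-1/2,0),1)$, rescaled so that $T(0)=0$. The normal-contraction property \eqref{eq:normal-contraction}, applied to $T$ divided by its Lipschitz constant, gives $\phi=T\circ g\in\F\cap C_c(X)$. This $\phi$ equals $1$ where $g>3/4$, and hence on a neighborhood of $\overline U$.

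With $\phi$ in hand, $u^\#+c\phi\in\F$ agrees with $u+c$ $m$-a.e. on $U$. It is therefore an admissible representative in \eqref{eq:local-Gamma}, which gives
\[
\Gp(u+c)|_U=\Gp(u^\#+c\phi)|_U .
\]
It remains to show $\Gp(u^\#+c\phi)|_U=\Gp(u^\#)|_U$. Property \ref{G3} only applies to functions in $\F\cap C_c(X)$, so we approximate. By regularity, choose $v_n\in\F\cap C_c(X)$ with $v_n\to u^\#$ in the norm $\|\cdot\|_{L^p}+\E^{1/p}$. Then $v_n+c\phi\in\F\cap C_c(X)$, and $(v_n+c\phi)-v_n=c$ is constant on $U$. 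Property \ref{G3} gives $\Gp(v_n+c\phi)(A)=\Gp(v_n)(A)$ for every Borel $A\subset U$.

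To pass to the limit we use \ref{G1} and \ref{G2}:
\[
\bigl|\Gp(f)(A)^{1/p}-\Gp(g)(A)^{1/p}\bigr|\le\Gp(f-g)(A)^{1/p}\le\E(f-g)^{1/p}.
\]
Hence $\Gp(v_n)(A)\to\Gp(u^\#)(A)$ and $\Gp(v_n+c\phi)(A)\to\Gp(u^\#+c\phi)(A)$, and the two limits coincide. This proves the identity on $U$.

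As a variant avoiding the approximation, one could apply \ref{G4} directly when $u^\#$ and $u^\#+c\phi$ differ by a constant on $U$. However, \ref{G4} concerns functions that agree, not functions differing by a constant, so the density argument above is the route we take.

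The main obstacle is producing the cutoff $\phi$ from regularity alone. We handle it via uniform approximation of an Urysohn function followed by a normal contraction, as described above. When $X$ is compact, one can alternatively use $1\in\F$ directly, which is exactly the argument referred to in the discussion after \eqref{eq:cap}.
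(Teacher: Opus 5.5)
Your argument is correct, and up to the last step it coincides with the paper's. You both take a representative $u^\#\in\F$ near $\overline U$, build a cutoff in $\F\cap C_c(X)$ equal to $1$ near $\overline U$ from regularity and a rescaled normal contraction, and use $u^\#+c\phi$ as the representative of $u+c$ on $U$.

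The difference is in how you compare $\Gp(u^\#+c\phi)$ with $\Gp(u^\#)$ on $U$. You approximate $u^\#$ by $v_n\in\F\cap C_c(X)$, apply \ref{G3} to the pair $v_n+c\phi$, $v_n$, and then pass to the limit. The paper needs no approximation: it applies \ref{G3} to the pair $c\phi$ and $0$. Both already lie in $\F\cap C_c(X)$ and differ by the constant $c$ on every Borel $A\subset U$, so $\Gp(c\phi)(A)=0$. The triangle inequality from \ref{G2}, used in both directions (with $\Gp(-c\phi)(A)=\Gp(c\phi)(A)$ by homogeneity), then gives $\Gp(u^\#+c\phi)(A)=\Gp(u^\#)(A)$ directly.

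Your limiting step already relies on exactly this triangle inequality, so the detour through $v_n$ buys nothing. It costs an extra appeal to graph-norm density of $\F\cap C_c(X)$ in $\F$ and to the bound $\Gp(\cdot)(A)\le\E(\cdot)$ from \ref{G1}, without weakening any hypothesis.
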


\begin{proof}
Choose $u^\#\in\F$ agreeing with $u$ near $\overline U$.  Regularity,
together with normal contraction, provides
$\chi\in\F\cap C_c(X)$ with $\chi=1$ near $\overline U$: choose
$h_0\in C_c(X)$ with $h_0=1$ near $\overline U$ and $0\le h_0\le1$,
then use regularity to find $h\in\F\cap C_c(X)$ with
$\|h-h_0\|_\infty<1/4$.  If $T$ is zero on $(-\infty,1/4]$, linear on
$[1/4,3/4]$, and one on $[3/4,\infty)$, then $T/2$ is a normal
contraction.  Hence $\chi:=T\circ h\in\F\cap C_c(X)$ by
\eqref{eq:normal-contraction} and homogeneity, and $\chi=1$ near
$\overline U$.  The function $u^\#+c\chi$ represents $u+c$ on $U$.  For every Borel $A\subset U$, \ref{G3} gives
$\Gp(c\chi)(A)=0$, and the two triangle inequalities furnished by
\ref{G2} yield
$\Gp(u^\#+c\chi)(A)^{1/p}=\Gp(u^\#)(A)^{1/p}$.  Hence the restrictions
coincide.
\end{proof}

\begin{lemma}\label{lem:PI-local}
Under \eqref{eq:PI}, the same inequality holds for $u\in\Floc$:
for every ball $B=B(x,R)$,
\[
 \int_B|u-u_B|^p\,dm
 \le C_P\Psi(R)\Gp(u)(A_PB).
\]
\end{lemma}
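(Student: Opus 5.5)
The plan is to reduce to the global inequality \eqref{eq:PI} by replacing $u$ with a global representative that agrees with it near the dilated ball. Fix $B=B(x,R)$ and $u\in\Floc$. The set $U:=B(x,A_PR+1)$ is open and relatively compact, because under \eqref{eq:VD} and completeness every ball has compact closure. By the definition \eqref{eq:Floc} of $\Floc$ there is $u^\#\in\F$ with $u=u^\#$ $m$-a.e. on $U$.

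Since $B\subset U$, the left-hand side does not change when $u$ is replaced by $u^\#$. Indeed, $u_B=(u^\#)_B$ and $|u-u_B|^p=|u^\#-(u^\#)_B|^p$ $m$-a.e. on $B$. Applying \eqref{eq:PI} to $u^\#$ then gives
\[
 \int_B|u-u_B|^p\,dm=\int_B|u^\#-(u^\#)_B|^p\,dm\le C_P\Psi(R)\Gp(u^\#)(A_PB).
\]

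It remains to identify the energy term. Because $A_PB\subset U$, the definition \eqref{eq:local-Gamma} gives $\Gp(u^\#)(A_PB)=\left.\Gp(u)\right|_U(A_PB)=\Gp(u)(A_PB)$. By \ref{G4}, this value does not depend on which representative $u^\#$ we chose, nor on which relatively compact open set containing $A_PB$ we used. So the measure $\Gp(u)$ assigns a consistent value to $A_PB$, and this proves the claim.

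The only point needing care is this consistency of the localized energy measure, and \ref{G4} handles it as already recorded after \eqref{eq:local-Gamma}. Everything else is bookkeeping with a.e. equalities. Relative compactness of the enlarged ball is exactly where the properness from \eqref{eq:VD} is used.
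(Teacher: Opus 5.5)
Your proposal is correct and follows essentially the same route as the paper. Both arguments pick $u^\#\in\F$ agreeing with $u$ on a relatively compact open neighborhood of $A_PB$ (available because balls have compact closure), apply the global Poincar\'e inequality to $u^\#$, and identify $\Gp(u^\#)(A_PB)$ with $\Gp(u)(A_PB)$ through the localization definition, whose consistency is guaranteed by \ref{G4}.
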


\begin{proof}
Because all balls are relatively compact, choose $u^\#\in\F$ agreeing
with $u$ on a relatively compact neighborhood of $A_PB$.  Apply
\eqref{eq:PI} to $u^\#$ and use the definition
\eqref{eq:local-Gamma}.
\end{proof}

\subsection{A controlled partition of unity}\label{sec:partition}

The next lemma supplies the capacitary partition used in the proof.  Controlled partitions of this
kind were used by Murugan~\cite[proof of Lemma~2.5]{Murugan2020},
Baudoin~\cite[proof of Theorem~5.2]{BaudoinKSS}, and
Shimizu~\cite[Lemma~3.10]{ShimizuBesov}; see also
Yang~\cite[Lemma~3.2]{Yang2026} and
Eriksson-Bique~\cite[Lemma~4.2]{ErikssonBique2026}.  For Lipschitz
partitions of unity subordinate to separated nets in doubling spaces, see
Hyt\"onen--Korte~\cite[Lemma~3.1]{HytonenKorte}.
We give the construction under the present assumptions, using the
normal-contraction property \eqref{eq:normal-contraction} and the algebra
estimate \eqref{eq:algebra}.

Note that since $X$ is separable, for every $t>0$ there is a countable maximal
$t$-separated set $\{x_i\}_{i\in I}$.  By maximality,
$X=\bigcup_i B(x_i,t)$, and by Lemma~\ref{lem:packing} every
fixed enlargement $\{B(x_i,\lambda t)\}_i$ has uniformly bounded
multiplicity.  In particular, such families are locally finite.

\begin{lemma}\label{lem:partition}
Assume \eqref{eq:VD} and \eqref{eq:cap}.  Set
\[
 L:=2A_c.
\]
There is $C_{\mathrm{cut}}<\infty$, depending only on
$p,C_D,C_\Psi,C_c,A_c$, such that the following holds.
For every $t>0$ satisfying
\[
 Lt<\frac12\diam X\qquad\text{if }\diam X<\infty,
\]
and every maximal $t$-separated set $\{x_i\}_{i\in I}$, there are functions
$\psi_i\in\F\cap L^\infty(X;m)$ such that
\begin{align}
 &0\le\psi_i\le1,\label{eq:part-pos}\\
 &\suppm\psi_i\subset B(x_i,Lt),\label{eq:part-supp}\\
 &\sum_{i\in I}\psi_i=1\quad m\text{-a.e. on }X,\label{eq:part-sum}\\
 &\E(\psi_i)\le C_{\mathrm{cut}}\frac{V(x_i,t)}{\Psi(t)}.
 \label{eq:part-energy}
\end{align}
The sum in \eqref{eq:part-sum} is locally finite, and every family
$\{B(x_i,\lambda t)\}_i$, for fixed $\lambda$, has uniformly bounded
multiplicity.
\end{lemma}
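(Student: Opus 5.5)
The plan is the usual normalization of capacitary cutoffs.

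\emph{Cutoffs.} For each $i$, apply \eqref{eq:cap} with $R=2t$. This is admissible, since $2t<Lt<\frac12\diam X$ when $X$ is bounded. It gives $\phi_i\in\F$ with $\phi_i=1$ $m$-a.e.\ near $B(x_i,2t)$, $\phi_i=0$ $m$-a.e.\ near $X\setminus B(x_i,2A_ct)$, and
\[
\E(\phi_i)\le 2C_c\,\frac{V(x_i,2t)}{\Psi(2t)}\le C\,\frac{V(x_i,t)}{\Psi(t)} ,
\]
using \eqref{eq:VD} and the monotonicity of $\Psi$.

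Replace $\phi_i$ by $(\phi_i\vee0)\wedge1$. This is a normal contraction of $\phi_i$ (both truncations are $1$-Lipschitz and fix $0$), so the energy bound survives by \eqref{eq:normal-contraction}. Now $0\le\phi_i\le1$, $\suppm\phi_i\subset\overline{B(x_i,2A_ct)}$ (in fact inside a slightly smaller closed set), and $\suppm\phi_i\subset B(x_i,Lt)$ because $L=2A_c$ and $\phi_i$ vanishes on a neighborhood of the complement of $B(x_i,2A_ct)$.

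\emph{Sum and division.} Set $\Phi:=\sum_j\phi_j$. By Lemma~\ref{lem:packing} the sum is locally finite with multiplicity at most $N:=N_D(L+1)$ on each ball $B(x,t)$. Since the $B(x_i,t)$ cover $X$ and $\phi_i=1$ on $B(x_i,2t)$, we get $1\le\Phi\le N$ $m$-a.e. Define $\psi_i:=\phi_i/\Phi$. Then \eqref{eq:part-pos}, \eqref{eq:part-supp} and \eqref{eq:part-sum} are immediate.

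\emph{Membership in $\F$ and the energy bound.} This is the main obstacle, because $\Phi$ is only in $\Floc$ and $1/\Phi$ is not literally in $\F$. The plan is to work with a finite sum. Let $I_i:=\{j: B(x_j,Lt)\cap B(x_i,Lt)\ne\emptyset\}$, which has $\#I_i\le N_D(2L+1)$, and set $\Phi_i:=\sum_{j\in I_i}\phi_j\in\F\cap L^\infty$. Then $\Phi_i=\Phi$ $m$-a.e.\ on $B(x_i,Lt)$, and $\Phi_i\ge\phi_i$ everywhere.

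Choose a Lipschitz map $T:\mathbb R\to\mathbb R$ with $T(s)=1/s$ for $s\ge1$, extended so that it is bounded and $1$-Lipschitz on the range. Then $\psi_i=\phi_i\cdot(T\circ\Phi_i)$ $m$-a.e.: on $B(x_i,Lt)$ this holds because $\Phi_i\ge1$ there, and off $\suppm\phi_i$ both sides vanish.

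Three points remain.
\begin{enumerate}
\item $T\circ\Phi_i$ is not in $\F$ (it does not decay), so instead write $\psi_i=\phi_i\,\bigl(T\circ\Phi_i-T(0)\bigr)+T(0)\phi_i$. Here $T-T(0)$ is Lipschitz and vanishes at $0$, so $T\circ\Phi_i-T(0)\in\F\cap L^\infty$ with energy at most $\mathrm{Lip}(T)^p\E(\Phi_i)$, by \eqref{eq:normal-contraction} and homogeneity.
\item The Leibniz estimate \eqref{eq:algebra} then gives $\psi_i\in\F$ and
\[
\E(\psi_i)^{1/p}\lesssim \E(\phi_i)^{1/p}+\E(\Phi_i)^{1/p},
\]
since all sup norms are at most $N$.
\item The triangle inequality for $\E^{1/p}$ gives $\E(\Phi_i)^{1/p}\le\sum_{j\in I_i}\E(\phi_j)^{1/p}$.
\end{enumerate}

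Finally, $V(x_j,t)\le C_D(2L+2)\,V(x_i,t)$ for $j\in I_i$, because $d(x_i,x_j)<2Lt$. Combining this with the bounded cardinality of $I_i$ yields \eqref{eq:part-energy}, with $C_{\mathrm{cut}}$ depending only on $p,C_D,C_\Psi,C_c,A_c$. The multiplicity statement is Lemma~\ref{lem:packing}.
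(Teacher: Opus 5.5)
Your proposal is correct and follows essentially the same route as the paper: capacitary near-minimizers at radius $2t$ truncated to $[0,1]$, a finite local sum over the indices with $d(x_i,x_j)<2Lt$, and the Leibniz estimate \eqref{eq:algebra} applied to $\phi_i$ times a Lipschitz function of that sum equal to $1/s$ on $[1,\infty)$. The differences are cosmetic. The paper's $H$ satisfies $H(s)=s$ on $[0,1]$, so $H(0)=0$ and no subtraction of $T(0)$ is needed. The paper also makes explicit the choice of representatives on one common null set, which your pointwise definition of $\Phi$ uses implicitly.
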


\begin{proof}
The scale restriction gives
$2t<\diam X$, so \eqref{eq:cap} is available at radius $2t$; moreover,
$B(x_i,Lt)$ is a proper subset of $X$.  This is immediate when
$\diam X=\infty$; when $\diam X<\infty$, it follows from
$\sup_{y\in X}d(x_i,y)\ge\frac12\diam X>Lt$.  Choose an admissible
near-minimizer at radius $2t$ and truncate it by normal contractions.
Doubling then gives $\eta_i\in\F$, $0\le\eta_i\le1$, such that
\[
 \eta_i=1\quad m\text{-a.e. on }B(x_i,2t),\qquad
 \suppm\eta_i\subset B(x_i,Lt),\qquad
 \E(\eta_i)\lesssim\frac{V(x_i,t)}{\Psi(t)}.
\]
The net is countable.  After changing representatives on one common null
set, we may assume that every $\eta_i$ vanishes off $\suppm\eta_i$ and has
the stated value on its inner ball.
Put $J_i=\{j:d(x_i,x_j)<2Lt\}$ and
$S_i=\sum_{j\in J_i}\eta_j$.  Lemma~\ref{lem:packing}, nearby-ball
volume comparison, and the triangle inequality for $\E^{1/p}$ give
\[
 \#J_i\lesssim1,\qquad
 \E(S_i)^{1/p}\lesssim
 \left(\frac{V(x_i,t)}{\Psi(t)}\right)^{1/p}.
\]
Define the $1$-Lipschitz normal contraction $H:\mathbb R\to[0,1]$ by
\[
 H(s)=
 \begin{cases}
 0,&s\le0,\\
 s,&0\le s\le1,\\
 s^{-1},&s\ge1.
 \end{cases}
\]
Set $\psi_i=\eta_iH(S_i)$.  Normal contraction and
\eqref{eq:algebra} give
\[
 \E(\psi_i)^{1/p}
 \lesssim \E(S_i)^{1/p}+\E(\eta_i)^{1/p}
 \lesssim\left(\frac{V(x_i,t)}{\Psi(t)}\right)^{1/p}.
\]
The family $\{\eta_i\}$ is locally finite.  Hence
\[
 S(x):=\sum_i\eta_i(x)
\]
is defined off the common null set and satisfies $S\ge1$, because the
$B(x_i,t)$ cover $X$.  On $\{\eta_i\ne0\}$ every nonzero $\eta_j$ has
$j\in J_i$;
therefore $S_i=S$ and $\psi_i=\eta_i/S$.  Summation proves
\eqref{eq:part-sum}; the other assertions follow from the support bounds
and Lemma~\ref{lem:packing}.
\end{proof}

\subsection{Discrete approximation at a fixed scale}\label{sec:approximation}

Fix $f\in L^p_{\mathrm{loc}}(X;m)$ and $0<r<s$.  If $\diam X<\infty$ and $r>\diam X$, then every ball $B(y,r)$ and $B(x,s)$ is $X$, so
\[
 M_f(x,s)=\frac{\Psi(r)}{\Psi(s)}M_f(y,r)\le M_f(y,r)
\]
for all $x,y$.  We therefore construct the approximation below whenever
$r\le\diam X$.

With $L=2A_c$ as in Lemma~\ref{lem:partition}, let
\begin{equation}\label{eq:L-theta}
 \theta:=\frac{1}{16(L+2)},
 \qquad
 t:=\theta r.
\end{equation}
If $\diam X<\infty$, then
\[
 2A_ct=Lt=L\theta r<\frac r{16}
 \le\frac1{16}\diam X<\frac12\diam X.
\]
Hence the scale hypothesis of Lemma~\ref{lem:partition} is satisfied.
Choose a maximal $t$-separated set $\{x_i\}$ and let
$\{\psi_i\}$ be the partition furnished by
Lemma~\ref{lem:partition}.  Put
\begin{equation}\label{eq:B-D-ai}
 B_i:=B(x_i,t),
 \qquad
 D_i:=B(x_i,r/4),
 \qquad
 a_i:=f_{B_i}.
\end{equation}
Define the locally finite discrete approximation
\begin{equation}\label{eq:Pr}
 P_rf:=\sum_i a_i\psi_i.
\end{equation}
Because the sum is locally finite and every $\psi_i\in\F$,
$P_rf\in\Floc$.

This is the capacitary counterpart of formula~(3.2) in
Hyt\"onen--Korte~\cite{HytonenKorte}.  Related discrete approximations appear
in Shimizu~\cite[Lemma~3.11]{ShimizuBesov} and
Yang~\cite[Section~3]{Yang2026}; compare also the discrete convolutions in
Eriksson-Bique~\cite[proof of Lemma~4.5]{ErikssonBique2026}.
The coefficient estimate below parallels the one in the proof of
Hyt\"onen--Korte~\cite[Lemma~3.3]{HytonenKorte}, with localized energy
measures in place of pointwise Lipschitz constants.

The choice of $\theta$ gives
\begin{equation}\label{eq:theta-geometry}
 \frac14+(L+2)\theta
 =\frac14+\frac1{16}
 =\frac5{16}<1.
\end{equation}

In this subsection, constants depend only on
$p,C_D,C_\Psi,C_c$, and $A_c$, unless an additional dependence is
specified.  The constants in Lemma~\ref{lem:energy} may also depend on
$A_P$.

\begin{lemma}\label{lem:coeff}
Let
\[
 K_i:=\{j:B_i\cap\suppm\psi_j\ne\varnothing\}.
\]
Then $\#K_i\le N_1$ uniformly in $i$, and for every $j\in K_i$,
\begin{equation}\label{eq:coeff-energy}
 |a_i-a_j|^p\E(\psi_j)
 \le C\int_{D_i}M_f(y,r)\,dm(y),
\end{equation}
where $N_1$ and $C$ depend only on
$p,C_D,C_\Psi,C_c$, and $A_c$.
\end{lemma}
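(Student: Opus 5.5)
The plan has two parts: a packing count for $\#K_i$, and a pointwise-to-average estimate for the coefficients.

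\emph{Step 1: bound $\#K_i$.} If $j\in K_i$, then $B(x_i,t)$ meets $\suppm\psi_j\subset B(x_j,Lt)$ by \eqref{eq:part-supp}. Hence $d(x_i,x_j)<(L+1)t$, and Lemma~\ref{lem:packing} with $\lambda=L+1$ gives $\#K_i\le N_D(L+1)=:N_1$.

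\emph{Step 2: energy side.} By \eqref{eq:part-energy}, $\E(\psi_j)\lesssim V(x_j,t)/\Psi(t)$. Since $d(x_i,x_j)<(L+1)t$, doubling gives $V(x_j,t)\asymp V(x_i,t)$. Since $t=\theta r$ with $\theta$ fixed, \eqref{eq:Psi-fixed} gives $\Psi(t)\asymp\Psi(r)$. Therefore
\[
|a_i-a_j|^p\E(\psi_j)\lesssim \frac{V(x_i,t)}{\Psi(r)}\,|a_i-a_j|^p ,
\]
and it remains to show
\[
V(x_i,t)\,|a_i-a_j|^p\lesssim \int_{D_i}\avint_{B(y,r)}|f-f_{B(y,r)}|^p\,dm\,dm(y),
\]
where $D_i=B(x_i,r/4)$.

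\emph{Step 3: coefficient estimate.} Fix any $y\in B(x_i,t)$ and write $B_y:=B(y,r)$. Both $B_i$ and $B_j$ lie in $B_y$. For $B_i$ this follows from $d(y,x_i)+t<2t<r$. For $B_j$, every $z\in B_j$ satisfies $d(y,z)<t+(L+1)t+t=(L+3)t$, and $(L+3)\theta<1$ by \eqref{eq:L-theta}. Applying H\"older on $B_i$ and $B_j$ and the triangle inequality,
\[
|a_i-a_j|^p\le 2^{p-1}\Bigl(\avint_{B_i}|f-f_{B_y}|^p\,dm+\avint_{B_j}|f-f_{B_y}|^p\,dm\Bigr)\le C\,\avint_{B_y}|f-f_{B_y}|^p\,dm .
\]
The last inequality uses $m(B_y)\lesssim m(B_i)$ and $m(B_y)\lesssim m(B_j)$, which hold because $r/t=\theta^{-1}$ is fixed and the centers are within $(L+2)t$. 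Averaging this bound over $y\in B_i$ gives
\[
|a_i-a_j|^p\le \frac{C\,\Psi(r)}{V(x_i,t)}\int_{B_i}M_f(y,r)\,dm(y).
\]
Since $B_i\subset D_i$ (because $t<r/4$), combining with Step 2 yields \eqref{eq:coeff-energy}.

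No step is genuinely hard. The only care needed is the geometric bookkeeping: checking that $B_i,B_j\subset B(y,r)$ for all $y\in B_i$, which uses the smallness of $\theta$, and that all volume comparisons depend only on $C_D$ and $L$.
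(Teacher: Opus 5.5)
Your proof is correct and follows the paper's argument: packing for $\#K_i$, comparison of $a_i$ and $a_j$ with $f_{B(y,r)}$ via Jensen and volume doubling, averaging over the centers $y$, and then combining with the cutoff energy bound and $\Psi(r)\le C\Psi(t)$. The only difference is bookkeeping: you average over $y\in B_i$ and then enlarge to $D_i$ using $B_i\subset D_i$ and $M_f\ge0$, whereas the paper averages directly over $y\in D_i$ and uses the inclusion $B(x_j,t)\subset D_i$ to compare $V(x_j,t)$ with $m(D_i)$.
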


\begin{proof}
If $j\in K_i$, choose $z\in B_i\cap\suppm\psi_j$.  By
\eqref{eq:part-supp},
\[
 d(x_i,x_j)
 \le d(x_i,z)+d(z,x_j)
 <(L+1)t.
\]
Lemma~\ref{lem:packing} therefore gives a uniform bound on $\#K_i$.

Fix $j\in K_i$ and $y\in D_i$, and set $Q_y=B(y,r)$.  If
$z\in B_j$, then by \eqref{eq:theta-geometry}
\[
 d(z,y)
 \le d(z,x_j)+d(x_j,x_i)+d(x_i,y)
 <t+(L+1)t+\frac r4
 = (L+2)t+\frac r4<r.
\]
Thus $B_j\subset Q_y$.  The same argument, with $j=i$, gives
$B_i\subset Q_y$.

By Jensen's inequality,
\begin{align*}
 |a_i-f_{Q_y}|^p
 &\le \frac1{V(x_i,t)}\int_{B_i}|f-f_{Q_y}|^p\,dm\\
 &\le \frac{V(y,r)}{V(x_i,t)}\Psi(r)M_f(y,r).
\end{align*}
Since $y\in B(x_i,r/4)$ and $r/t=\theta^{-1}$ is a fixed structural
constant, volume doubling gives
\begin{equation}\label{eq:vol-r-t}
 \frac{V(y,r)}{V(x_i,t)}\le C.
\end{equation}
Similarly,
$|a_j-f_{Q_y}|^p\le C\Psi(r)M_f(y,r)$.
Therefore
\begin{equation}\label{eq:coeff-pointwise}
 |a_i-a_j|^p\le C\Psi(r)M_f(y,r),
 \qquad y\in D_i.
\end{equation}
Averaging \eqref{eq:coeff-pointwise} over $D_i$ gives
\begin{equation}\label{eq:coeff-avg}
 |a_i-a_j|^p
 \le C\frac{\Psi(r)}{V(x_i,r/4)}
          \int_{D_i}M_f(y,r)\,dm(y).
\end{equation}

By \eqref{eq:part-energy},
\[
 \E(\psi_j)\le C_{\mathrm{cut}}
 \frac{V(x_j,t)}{\Psi(t)}.
\]
Since $j\in K_i$,
\[
 B(x_j,t)\subset B(x_i,(L+2)t)
 \subset B(x_i,r/4)=D_i,
\]
where the second inclusion follows from
$(L+2)t=r/16<r/4$.  Hence
$V(x_j,t)\le V(x_i,r/4)$.  Finally, because
$r=\theta^{-1}t$ and $\theta$ is fixed, the doubling property of $\Psi$
gives
\begin{equation}\label{eq:Psi-r-t}
 \Psi(r)\le C_\theta\Psi(t),
\end{equation}
where $C_\theta$ depends only on $C_\Psi$ and $A_c$.
Multiplying \eqref{eq:coeff-avg} by the energy bound for $\psi_j$ and
using these two observations proves \eqref{eq:coeff-energy}.
\end{proof}

\begin{lemma}\label{lem:approx}
For every ball $B=B(x,s)$ with $s>r$,
\begin{equation}\label{eq:approx}
 \int_B|f-P_rf|^p\,dm
 \le C\Psi(r)\int_{B(x,2s)}M_f(y,r)\,dm(y).
\end{equation}
\end{lemma}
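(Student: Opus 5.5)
Since $\sum_i\psi_i=1$ $m$-a.e. by \eqref{eq:part-sum}, we write, $m$-a.e.,
\[
 f-P_rf=\sum_i(f-a_i)\psi_i ,
\]
and the plan is to localize this identity to the cover $\{B_i\}$ and then apply Jensen on balls of radius $r$. The argument uses only the partition properties of Lemma~\ref{lem:partition} and volume doubling; no energy enters.

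First, cover $B=B(x,s)$ by the balls $B_k$ with $B_k\cap B\neq\varnothing$; these cover $X$ by maximality of the net. Fix such a $k$ and a point $z\in B_k$. The only nonzero terms at $z$ come from indices $i$ with $z\in\suppm\psi_i$, that is, from $i\in K_k$ in the notation of Lemma~\ref{lem:coeff}. Since $0\le\psi_i\le1$, $\sum_i\psi_i=1$, and $\#K_k\le N_1$, we get
\[
 |f(z)-P_rf(z)|^p\le \max_{i\in K_k}|f(z)-a_i|^p\le C\Bigl(|f(z)-a_k|^p+\max_{i\in K_k}|a_k-a_i|^p\Bigr).
\]
Alternatively, convexity of $t\mapsto|t|^p$ with weights $\psi_i$ gives $\sum_i\psi_i|f-a_i|^p$, which is equally good.

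Second, bound the two pieces for $y\in D_k$, with $Q_y=B(y,r)$ as in the proof of Lemma~\ref{lem:coeff}. There we saw $B_k\subset Q_y$, so
\[
 \int_{B_k}|f-a_k|^p\,dm\le 2^{p}\int_{B_k}|f-f_{Q_y}|^p\,dm\le 2^{p}V(y,r)\Psi(r)M_f(y,r).
\]
Here we used $|a_k-f_{Q_y}|^p\le\avint_{B_k}|f-f_{Q_y}|^p\,dm$ by Jensen. For the second piece, \eqref{eq:coeff-pointwise} gives $|a_k-a_i|^p\le C\Psi(r)M_f(y,r)$, and multiplying by $m(B_k)=V(x_k,t)$ yields the same bound. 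Averaging over $y\in D_k$ and using \eqref{eq:vol-r-t} (the quantities $V(y,r)$, $V(x_k,t)$, $V(x_k,r/4)$ are all comparable) gives
\[
 \int_{B_k}|f-P_rf|^p\,dm\le C\Psi(r)\int_{D_k}M_f(y,r)\,dm(y).
\]

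Third, sum over all $k$ with $B_k\cap B\ne\varnothing$. The balls $D_k=B(x_k,r/4)$ with $x_k$ in a $t$-separated set have multiplicity bounded by $N_D$ from Lemma~\ref{lem:packing}, since $r/4=t/(4\theta)$ is a fixed multiple of $t$. This gives
\[
 \int_B|f-P_rf|^p\,dm\le\sum_k\int_{B_k}|f-P_rf|^p\,dm\le C\Psi(r)\int_{\bigcup_k D_k}M_f(y,r)\,dm(y).
\]
Finally, each such $D_k$ lies within $s+t+r/4<2s$ of $x$, because $r<s$ and $t<r/16$. Hence $\bigcup_kD_k\subset B(x,2s)$, which proves \eqref{eq:approx}.

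The main point to handle carefully is the pointwise localization in the first step: the identity $f-P_rf=\sum_i(f-a_i)\psi_i$ must hold off one common null set. The countable family and the a.e. statement \eqref{eq:part-sum} make this routine. Beyond that, the only work is the bookkeeping of geometric constants through $\theta$.
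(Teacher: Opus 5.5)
Your proof is correct and uses the same ingredients as the paper: Jensen on $Q_y=B(y,r)$, averaging over $D_i$, bounded multiplicity of $\{D_i\}$, and containment in $B(x,2s)$. The only difference is bookkeeping. You localize to the cover $\{B_k\}$, which requires the neighbor-coefficient bound \eqref{eq:coeff-pointwise}; the paper instead integrates the convexity bound $\sum_i\psi_i|f-a_i|^p$ over each $\suppm\psi_i$ and compares $a_i$ directly with $f_{Q_y}$ via $\suppm\psi_i\cup B_i\subset Q_y$, which avoids both Lemma~\ref{lem:coeff} and the pointwise localization to $K_k$.
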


\begin{proof}
By \eqref{eq:part-sum}, non-negativity, and convexity,
\begin{equation}\label{eq:convex-error}
 |f-P_rf|^p
 =\left|\sum_i\psi_i(f-a_i)\right|^p
 \le \sum_i\psi_i|f-a_i|^p.
\end{equation}
Fix $i$ and $y\in D_i$.  By \eqref{eq:part-supp} and
\eqref{eq:theta-geometry},
\[
 \suppm\psi_i\cup B_i\subset B(y,r)=Q_y.
\]
Therefore
\begin{align}
 \int_{\suppm\psi_i}|f-a_i|^p\,dm
 &\le 2^{p-1}\int_{Q_y}|f-f_{Q_y}|^p\,dm
 \nonumber\\
 &\quad
 +2^{p-1}m(\suppm\psi_i)|a_i-f_{Q_y}|^p.
 \label{eq:error-one-i-1}
\end{align}
The first term on the right is
$2^{p-1}\Psi(r)V(y,r)M_f(y,r)$.  For the second term, Jensen's
inequality and \eqref{eq:vol-r-t} yield
\[
 |a_i-f_{Q_y}|^p
 \le C\Psi(r)M_f(y,r).
\]
Furthermore,
\[
 m(\suppm\psi_i)\le V(x_i,Lt)
 \le C_{D,L}V(x_i,t)
 \le C'_{D,L}V(y,r),
\]
so \eqref{eq:error-one-i-1} gives
\begin{equation}\label{eq:error-one-i-point}
 \int_{\suppm\psi_i}|f-a_i|^p\,dm
 \le C\Psi(r)V(y,r)M_f(y,r).
\end{equation}
Since $y\in D_i$, volume doubling gives
$V(y,r)\le CV(x_i,r/4)=Cm(D_i)$.  Averaging
\eqref{eq:error-one-i-point} over $y\in D_i$ therefore yields
\begin{equation}\label{eq:error-one-i}
 \int_{\suppm\psi_i}|f-a_i|^p\,dm
 \le C\Psi(r)\int_{D_i}M_f(y,r)\,dm(y).
\end{equation}

Integrating \eqref{eq:convex-error} over $B$ and using
\eqref{eq:error-one-i},
\begin{equation}\label{eq:error-sum}
 \int_B|f-P_rf|^p\,dm
 \le C\Psi(r)
 \sum_{i:\suppm\psi_i\cap B\ne\varnothing}
 \int_{D_i}M_f(y,r)\,dm(y).
\end{equation}
The family $\{D_i\}$ has uniformly bounded multiplicity by
Lemma~\ref{lem:packing}, because $r/4=(4\theta)^{-1}t$ is a fixed
multiple of $t$.  If $\suppm\psi_i\cap B(x,s)\ne\varnothing$, then
$d(x_i,x)<s+Lt$; hence, for $y\in D_i$,
\[
 d(y,x)<s+Lt+r/4
 \le \left(1+L\theta+\frac14\right)s<2s.
\]
Thus all the relevant $D_i$ are contained in $B(x,2s)$.
Using the bounded multiplicity in \eqref{eq:error-sum} proves
\eqref{eq:approx}.
\end{proof}

\begin{lemma}\label{lem:energy}
For every ball $B=B(x,s)$ with $s>r$,
\begin{equation}\label{eq:energy}
 \Gp(P_rf)(A_PB)
 \le C\int_{B(x,(A_P+1)s)}M_f(y,r)\,dm(y).
\end{equation}
\end{lemma}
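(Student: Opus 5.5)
The idea is to localize the energy measure of $P_rf$ to the sets $B_i$, where $P_rf$ differs from a constant by a finite sum of partition functions. Fix $B=B(x,s)$ with $s>r$. By \eqref{eq:part-sum}, $m$-a.e.\ on the open set $B_i$ we have
\[
 P_rf-a_i=\sum_j (a_j-a_i)\psi_j=\sum_{j\in K_i}(a_j-a_i)\psi_j ,
\]
because $\psi_j=0$ a.e.\ on $B_i$ whenever $j\notin K_i$. The right-hand side is a finite sum of elements of $\F$, hence lies in $\F$. By \eqref{eq:local-Gamma} and \ref{G4}, $\Gp(P_rf-a_i)|_{B_i}$ is therefore the restriction of the energy measure of this finite sum. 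Lemma~\ref{lem:constants} removes the constant $a_i$. Then \ref{G2} and \ref{G1} give
\[
 \Gp(P_rf)(B_i)^{1/p}\le\sum_{j\in K_i}|a_j-a_i|\,\Gp(\psi_j)(B_i)^{1/p}
 \le\sum_{j\in K_i}|a_j-a_i|\,\E(\psi_j)^{1/p}.
\]
Using $\#K_i\le N_1$, convexity, and Lemma~\ref{lem:coeff}, this yields
\[
 \Gp(P_rf)(B_i)\le C\int_{D_i}M_f(y,r)\,dm(y).
\]

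Next I cover $A_PB$. The balls $B_i$ cover $X$ by maximality of the net, so
\[
 \Gp(P_rf)(A_PB)\le\sum_{i:\,B_i\cap A_PB\ne\varnothing}\Gp(P_rf)(B_i).
\]
This uses countable subadditivity of the positive Radon measure $\Gp(P_rf)$, localized to a relatively compact neighborhood of $\overline{A_PB}$. If $B_i$ meets $A_PB$ and $y\in D_i$, then
\[
 d(y,x)<A_Ps+t+r/4<(A_P+1)s,
\]
since $t+r/4<r<s$. Hence every relevant $D_i$ lies in $B(x,(A_P+1)s)$. The family $\{D_i\}$ has uniformly bounded multiplicity by Lemma~\ref{lem:packing}, because $r/4$ is a fixed multiple of $t$. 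Summing the previous bound over $i$ therefore gives \eqref{eq:energy}.

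The main obstacle is the justification of the localization step, which has two parts. First, the identity $P_rf=a_i+\sum_{j\in K_i}(a_j-a_i)\psi_j$ must hold $m$-a.e.\ on the open set $B_i$, so that \ref{G4} applies. Second, the representative $u^\#$ of $P_rf$ used in \eqref{eq:local-Gamma} on a large relatively compact set must be compatible with this finite sum on $B_i$. For the second part I will apply \ref{G4} to the open set $B_i$ inside that larger set, comparing $u^\#-a_i\chi$ with $\sum_{j\in K_i}(a_j-a_i)\psi_j$, where $\chi$ is the cutoff from Lemma~\ref{lem:constants}. The subadditivity over the cover also needs the measure to be well defined on a neighborhood of $A_PB$, which holds because balls are relatively compact.
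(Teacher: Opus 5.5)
Your proposal is correct and follows the paper's proof essentially step for step. You rewrite $P_rf-a_i$ on each $B_i$ as a finite sum over $K_i$, remove the constant via Lemma~\ref{lem:constants}, bound the result with the seminorm property \ref{G2} and Lemma~\ref{lem:coeff}, and sum over the $B_i$ meeting $A_PB$ using the bounded multiplicity of $\{D_i\}$ inside $B(x,(A_P+1)s)$; the only cosmetic difference is that you invoke countable subadditivity where the paper notes that the relevant index set is finite.
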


\begin{proof}
The balls $B_i$ cover $X$.  Let
\[
 I_B:=\{i:B_i\cap A_PB\ne\varnothing\}.
\]
By local finiteness and relative compactness, $I_B$ is finite, and
positivity of $\Gp(P_rf)$ gives
\begin{equation}\label{eq:energy-cover}
 \Gp(P_rf)(A_PB)
 \le\sum_{i\in I_B}\Gp(P_rf)(B_i).
\end{equation}

Fix $i\in I_B$.  Recall
$K_i=\{j:B_i\cap\suppm\psi_j\ne\varnothing\}$.  On $B_i$,
local finiteness and the partition identity give
\begin{equation}\label{eq:local-rewrite}
 P_rf-a_i
 =\sum_{j\in K_i}(a_j-a_i)\psi_j.
\end{equation}
The right-hand side is a finite sum belonging to $\F$.  By the definition of the
localized energy measure and Lemma~\ref{lem:constants},
\begin{equation}\label{eq:Gamma-local-rewrite}
 \Gp(P_rf)(B_i)
 =\Gp(P_rf-a_i)(B_i)
 =\Gp\left(\sum_{j\in K_i}(a_j-a_i)\psi_j\right)(B_i).
\end{equation}
Using the seminorm property \ref{G2}, followed by
$(\sum_{j=1}^N b_j)^p\le N^{p-1}\sum_{j=1}^N b_j^p$, we obtain
\begin{align}
 \Gp(P_rf)(B_i)
 &\le (\#K_i)^{p-1}
 \sum_{j\in K_i}|a_j-a_i|^p\Gp(\psi_j)(B_i)
 \nonumber\\
 &\le N_1^{p-1}
 \sum_{j\in K_i}|a_j-a_i|^p\E(\psi_j).
 \label{eq:Gamma-local-bound}
\end{align}
Lemma~\ref{lem:coeff} and $\#K_i\le N_1$ give
\begin{equation}\label{eq:Gamma-one-i}
 \Gp(P_rf)(B_i)
 \le C\int_{D_i}M_f(y,r)\,dm(y).
\end{equation}

The family $\{D_i\}$ has uniformly bounded multiplicity.  Moreover, if
$i\in I_B$, choose $z\in B_i\cap B(x,A_Ps)$.  Then
$d(x_i,x)<A_Ps+t$, and for $y\in D_i$,
\[
 d(y,x)<A_Ps+t+r/4
 \le (A_P+\theta+1/4)s<(A_P+1)s.
\]
Thus
\[
 \bigcup_{i\in I_B}D_i\subset B(x,(A_P+1)s).
\]
Summing \eqref{eq:Gamma-one-i} in \eqref{eq:energy-cover} and using the
bounded multiplicity proves \eqref{eq:energy}.
\end{proof}

\subsection{Proof of the macroscopic inequality}\label{sec:main-proof}

The proof follows the decomposition into approximation error and
Poincar\'e oscillation used by
Hyt\"onen--Korte~\cite[proof of Theorem~3.4]{HytonenKorte}.  Lemmas~\ref{lem:approx} and~\ref{lem:energy} provide the corresponding
estimates in terms of the energy measure.

\begin{proof}[Proof of Theorem~\ref{thm:main}]
Fix $f\in L^p_{\mathrm{loc}}(X;m)$, $x\in X$, and $0<r<s$.  When
$X$ has finite diameter, the case $r>\diam X$ was settled above.  In the
remaining cases, let $u=P_rf$ be the approximation constructed in
Subsection~\ref{sec:approximation}.

Set $B=B(x,s)$.  We first compare the oscillation of $f$ with the
approximation error and the oscillation of $u$.  Since
\[
 |f_B-u_B|=|(f-u)_B|
 \le V(x,s)^{-1/p}\|f-u\|_{L^p(B)},
\]
we have
\[
 \|f-f_B\|_{L^p(B)}
 \le \|f-u\|_{L^p(B)}+\|u-u_B\|_{L^p(B)}
      +V(x,s)^{1/p}|u_B-f_B|
\]
and therefore
\begin{equation}\label{eq:osc-decomp}
 \int_B|f-f_B|^p\,dm
 \le C_p\left(
 \int_B|f-u|^p\,dm+
 \int_B|u-u_B|^p\,dm\right),
\end{equation}
where $C_p$ depends only on~$p$.

By Lemma~\ref{lem:approx},
\begin{equation}\label{eq:final-error}
 \int_B|f-u|^p\,dm
 \le C\Psi(r)\int_{B(x,2s)}M_f(y,r)\,dm(y).
\end{equation}
By Lemmas~\ref{lem:PI-local} and~\ref{lem:energy},
\begin{align}
 \int_B|u-u_B|^p\,dm
 &\le C_P\Psi(s)\Gp(u)(A_PB)\nonumber\\
 &\le C\Psi(s)
 \int_{B(x,(A_P+1)s)}M_f(y,r)\,dm(y).
 \label{eq:final-energy}
\end{align}
Since $r<s$ and $\Psi$ is increasing,
$\Psi(r)\le\Psi(s)$, and since $A_P\ge1$, $B(x,2s)\subset B(x,(A_P+1)s)$.  Combining
\eqref{eq:osc-decomp}--\eqref{eq:final-energy} and setting
\[
 A_M:=A_P+1,
\]
we obtain
\begin{equation}\label{eq:pre-final}
 \int_{B(x,s)}|f-f_{B(x,s)}|^p\,dm
 \le C\Psi(s)
 \int_{B(x,A_Ms)}M_f(y,r)\,dm(y).
\end{equation}
Divide by $\Psi(s)V(x,s)$.  Using volume doubling once more at the fixed
scale $A_M$,
\[
 \frac{V(x,A_Ms)}{V(x,s)}\le C_D(A_M),
\]
we obtain from \eqref{eq:pre-final}
\[
 M_f(x,s)
 \le CC_D(A_M)
 \avint_{B(x,A_Ms)}M_f(y,r)\,dm(y).
\]
This is \eqref{eq:mPI} with
$C_M=CC_D(A_M)$.

\end{proof}

\begin{remark}\label{rem:no-reverse}
The capacity bound, normal contractions, and the algebra property enter
only through the controlled partition of unity.  The remaining argument
uses volume doubling, $\mathrm{PI}(\Psi)$, the localized properties
\ref{G1}, \ref{G2}, \ref{G4}, invariance under addition of constants, and
fixed-ratio upper comparisons of~$\Psi$.
\end{remark}

\begin{remark}\label{rem:Yang-alternative}
An alternative proof of Theorem~\ref{thm:main} may be obtained by adapting Yang~\cite[Proposition~4.5]{Yang2026nonlocal}. We sketch the argument.   The result \cite[Proposition~4.5]{Yang2026nonlocal} bounds the oscillation on a ball by localized pairwise differences at an arbitrary smaller scale.  Applying it at scale~$r/2$ and averaging the pairwise differences over nearby centers yields~\eqref{eq:mPI}, by volume doubling and doubling of~$\Psi$.  Yang works on unbounded spaces with a chain condition and two-sided power control of~$\Psi$; however, for this estimate, his argument extends to our setting upon using Lemma~\ref{lem:partition}, and only fixed-ratio comparisons of~$\Psi$ are needed.  The passage from~$L^p(X;m)$ to~$L^p_{\mathrm{loc}}(X;m)$ follows by localization. We did not follow that route to keep the paper self-contained. 
\end{remark}

\subsection{Microscopic control}\label{sec:microscopic-control}

We say that the microscopic control condition \eqref{eq_MC} holds if there
exist $C_{MC}>0$ and $A_{MC}\ge1$ such that, for every ball $B(x_0,R)$
and every $f\in\F$,
\begin{equation*}\label{eq_MC}\tag*{$\text{MC}(\Psi)$}
 \liminf_{r\downarrow0}\int_{B(x_0,R)}M_f(x,r)\,dm(x)
 \le C_{MC}\Gp(f)(B(x_0,A_{MC}R)).
\end{equation*}

The following proposition relates the macroscopic inequality to the
energy measure of the given form.

\begin{proposition}\label{prop:microscopic-control}
Suppose that Assumption~\ref{ass:E} holds.
\begin{enumerate}[label=(\roman*)]
\item If \eqref{eq:VD} and \eqref{eq:PI} hold, then \eqref{eq_MC} holds
with $A_{MC}=2A_P$ and $C_{MC}=C_PC_D(2A_P)$.  In fact, the estimate
holds with $\limsup$ in place of $\liminf$.
\item If \eqref{eq:mPI} and \eqref{eq_MC} hold, then \eqref{eq:PI} holds
with $C_P'=C_MC_{MC}$ and $A_P'=A_MA_{MC}$, without assuming
\eqref{eq:VD} or \eqref{eq:cap}.
\end{enumerate}
Consequently, under Assumption~\ref{ass:E}, \eqref{eq:VD}, and
\eqref{eq:cap},
\[
 \eqref{eq:PI}
 \quad\Longleftrightarrow\quad
 \eqref{eq:mPI}\ \text{and}\ \eqref{eq_MC}.
\]
\end{proposition}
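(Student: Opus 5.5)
For (i), I will apply \eqref{eq:PI} directly on the small balls $B(x,r)$. For $f\in\F$, $x\in X$ and $r>0$,
\[
 M_f(x,r)=\frac1{\Psi(r)V(x,r)}\int_{B(x,r)}|f-f_{B(x,r)}|^p\,dm
 \le C_P\frac{\Gp(f)(B(x,A_Pr))}{V(x,r)}.
\]
Next I integrate over $x\in B(x_0,R)$ with $r$ small, say $A_Pr<R$. Fubini (measurability was recorded above) gives
\[
 \int_{B(x_0,R)}\frac{\Gp(f)(B(x,A_Pr))}{V(x,r)}\,dm(x)
 =\int_{B(x_0,R+A_Pr)}\int_{B(x_0,R)\cap B(z,A_Pr)}\frac{dm(x)}{V(x,r)}\,d\Gp(f)(z).
\]
For $x\in B(z,A_Pr)$ we have $B(z,A_Pr)\subset B(x,2A_Pr)$. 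Doubling then yields $m(B(z,A_Pr))\le V(x,2A_Pr)\le C_D(2A_P)V(x,r)$, so the inner integral is at most $C_D(2A_P)$. Since $R+A_Pr<2A_PR$ for small $r$ (recall $A_P\ge1$), the whole expression is bounded by $C_PC_D(2A_P)\Gp(f)(B(x_0,2A_PR))$ uniformly in small $r$. This bound holds for the $\limsup$, and hence for the $\liminf$.

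For (ii), fix $f\in\F$ and a ball $B(x,s)$. For every $r<s$, \eqref{eq:mPI} gives
\[
 \int_{B(x,s)}|f-f_B|^p\,dm=\Psi(s)V(x,s)M_f(x,s)\le C_M\Psi(s)\frac{V(x,s)}{V(x,A_Ms)}\int_{B(x,A_Ms)}M_f(y,r)\,dm(y).
\]
Because $A_M\ge1$, the volume ratio is at most $1$. The left-hand side does not depend on $r$, so I can take $\liminf_{r\downarrow0}$ on the right. Applying \eqref{eq_MC} on the ball $B(x,A_Ms)$ then gives the bound $C_MC_{MC}\Psi(s)\Gp(f)(B(x,A_MA_{MC}s))$, which is \eqref{eq:PI} with the stated constants. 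This part uses no doubling and no capacity estimate.

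For the equivalence, assume Assumption~\ref{ass:E}, \eqref{eq:VD} and \eqref{eq:cap}. If \eqref{eq:PI} holds, Theorem~\ref{thm:main} gives \eqref{eq:mPI} and part (i) gives \eqref{eq_MC}. The converse direction is part (ii).

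The only real obstacle is the Fubini step in (i): it needs the volume comparison that makes the inner integral uniformly bounded, and the restriction $A_Pr<R$ so that all the energy is captured in $B(x_0,2A_PR)$.
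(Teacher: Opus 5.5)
Your proposal is correct and follows the paper's proof essentially verbatim. In (i) you apply $\mathrm{PI}(\Psi)$ on $B(x,r)$, interchange the integrals by Tonelli, and bound the inner integral by $C_D(2A_P)$ via $V(z,A_Pr)\le V(x,2A_Pr)\le C_D(2A_P)V(x,r)$; in (ii) you integrate \eqref{eq:mPI}, drop the volume ratio using $A_M\ge1$, and take the lower limit before applying \eqref{eq_MC} on $B(x,A_Ms)$, exactly as the paper does (the paper restricts to $r<R$ rather than $A_Pr<R$, which equally gives $B(x_0,R+A_Pr)\subset B(x_0,2A_PR)$).
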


\begin{proof}[Proof of Proposition~\ref{prop:microscopic-control}]
Assume \eqref{eq:VD} and \eqref{eq:PI}.  Fix $B=B(x_0,R)$,
$f\in\F$, and $0<r<R$.  By \eqref{eq:PI},
\[
 M_f(x,r)\le C_P\frac{\Gp(f)(B(x,A_Pr))}{V(x,r)}.
\]
If $x\in B(z,A_Pr)$, then
\[
 V(z,A_Pr)\le V(x,2A_Pr)\le C_D(2A_P)V(x,r).
\]
Tonelli's theorem therefore gives
\begin{align*}
 \int_B M_f(x,r)\,dm(x)
 &\le C_P\int_X
 \left(\int_{B\cap B(z,A_Pr)}\frac{dm(x)}{V(x,r)}\right)d\Gp(f)(z)\\
 &\le C_PC_D(2A_P)\Gp(f)(B(x_0,R+A_Pr))\\
 &\le C_PC_D(2A_P)\Gp(f)(B(x_0,2A_PR)).
\end{align*}
Taking the upper limit as $r\downarrow0$ proves the stated stronger
estimate, and hence \eqref{eq_MC}.

Conversely, assume \eqref{eq:mPI} and \eqref{eq_MC}.  Fix $B=B(x_0,R)$
and $f\in\F$.  For every $0<r<R$, \eqref{eq:mPI} and $A_M\ge1$ give
\begin{align*}
 \int_B|f-f_B|^p\,dm
 &\le C_M\Psi(R)\frac{V(x_0,R)}{V(x_0,A_MR)}
       \int_{B(x_0,A_MR)}M_f(x,r)\,dm(x)\\
 &\le C_M\Psi(R)\int_{B(x_0,A_MR)}M_f(x,r)\,dm(x).
\end{align*}
Taking the lower limit as $r\downarrow0$ and applying \eqref{eq_MC}
on $B(x_0,A_MR)$ yields
\[
 \int_B|f-f_B|^p\,dm
 \le C_MC_{MC}\Psi(R)\Gp(f)(B(x_0,A_MA_{MC}R)).
\]
This is \eqref{eq:PI} with $C_P'=C_MC_{MC}$ and $A_P'=A_MA_{MC}$.
No doubling or capacity estimate is used in this implication.
The final equivalence follows from these two implications and
Theorem~\ref{thm:main}.
\end{proof}

\subsection{Two-scale comparison of the Korevaar--Schoen functionals}
\label{sec:two-scale}
We compare integrated ball oscillations with the Korevaar--Schoen functionals~\eqref{eq:J}, then integrate~\eqref{eq:mPI} to prove Theorem~\ref{thm:two-scale}.

For $f\in L^p(X;m)$ and $r>0$, set
\begin{equation}\label{eq:O}
 \mathcal O_r(f):=\int_XM_f(x,r)\,dm(x).
\end{equation}

\begin{lemma}\label{lem:comparison}
Assume \eqref{eq:VD} and \eqref{eq:Psi-doubling}.  There is $C\ge1$,
depending only on $p,C_D,C_\Psi$, such that for every
$f\in L^p(X;m)$ and $r>0$,
\begin{equation}\label{eq:O-to-J}
 \mathcal O_r(f)\le CJ_{2r}(f),
\end{equation}
and
\begin{equation}\label{eq:J-to-O}
 J_r(f)\le C\mathcal O_{2r}(f).
\end{equation}
\end{lemma}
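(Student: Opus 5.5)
We prove both inequalities by elementary pointwise comparisons. They rest on two facts: the standard identity bounding mean oscillation by pairwise differences, and volume doubling at comparable radii. Tonelli's theorem (all integrands are nonnegative and Borel) lets us exchange the order of integration.

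For \eqref{eq:O-to-J}, fix $x$ and write $B=B(x,r)$. Jensen's inequality gives
\[
 \avint_B|f-f_B|^p\,dm\le\avint_B\avint_B|f(z)-f(y)|^p\,dm(y)\,dm(z).
\]
For $z\in B(x,r)$ we have $B(x,r)\subset B(z,2r)$, and doubling gives $V(z,2r)\le V(x,3r)\le C_D^2V(x,r)$. Hence
\[
 \avint_B|f(z)-f(y)|^p\,dm(y)\le C\avint_{B(z,2r)}|f(z)-f(y)|^p\,dm(y)=:Cg(z).
\]
This yields
\[
 \mathcal O_r(f)\le\frac{C}{\Psi(r)}\int_X\frac{1}{V(x,r)}\int_{B(x,r)}g(z)\,dm(z)\,dm(x).
\]
Applying Tonelli, the inner $x$-integral becomes $\int_{B(z,r)}V(x,r)^{-1}\,dm(x)$. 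Since $V(z,r)\le V(x,2r)\le C_DV(x,r)$ when $d(x,z)<r$, this integral is bounded by $C_D$. Therefore
\[
 \mathcal O_r(f)\le\frac{C}{\Psi(r)}\int_X g\,dm=C\frac{\Psi(2r)}{\Psi(r)}J_{2r}(f)\le CC_\Psi J_{2r}(f).
\]

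For \eqref{eq:J-to-O}, fix $x$ and $y\in B(x,r)$. We insert the average over a ball containing both points. For any $w$ with $d(w,x)<r$, the ball $Q_w:=B(w,2r)$ contains both $x$ and $y$. However, pointwise values are not controlled by averages, so we keep $f(x)-f(y)$ inside the integral and argue as follows. We use
\[
 |f(x)-f(y)|^p\le2^{p-1}\bigl(|f(x)-f_{Q_w}|^p+|f(y)-f_{Q_w}|^p\bigr)
\]
and average over $w\in B(x,r)$. This gives
\[
 \Psi(r)J_r(f)\le 2^{p-1}\int_X\avint_{B(x,r)}\avint_{B(x,r)}\bigl(|f(x)-f_{Q_w}|^p+|f(y)-f_{Q_w}|^p\bigr)\,dm(w)\,dm(y)\,dm(x).
\]

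We handle the two terms by Tonelli with kernel bounds, where doubling makes every volume factor comparable to $V(w,2r)$.

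For the first term, the $y$-integral is trivial. We then integrate over pairs $(x,w)$ with $d(x,w)<r$, with weight $V(x,r)^{-1}$. Exchanging the order, each $w$ contributes
\[
 \int_{B(w,r)}|f(x)-f_{Q_w}|^p\,\frac{dm(x)}{V(x,r)}\le C\avint_{Q_w}|f-f_{Q_w}|^p\,dm\,\frac{V(w,2r)}{V(w,r)},
\]
which after integrating in $w$ is bounded by $C\Psi(2r)\mathcal O_{2r}(f)$.

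For the second term, we integrate over triples with $d(x,y),d(x,w)<r$, so $y\in Q_w$. The weight is $V(x,r)^{-2}\lesssim V(w,2r)^{-2}$. Integrating out $x$ over $B(w,r)$ contributes a factor of at most $V(w,r)$, and we are left with $C\int_X\avint_{Q_w}|f-f_{Q_w}|^p\,dm\,dm(w)$ again. Finally, $\Psi(2r)\le C_\Psi\Psi(r)$ gives $J_r\le C\mathcal O_{2r}$.

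The only delicate point is tracking these volume-comparison factors. Each one is a fixed-ratio doubling comparison between centers at distance at most $2r$, so the constants depend only on $p$, $C_D$, and $C_\Psi$.
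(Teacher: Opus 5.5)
Your proof is correct and follows essentially the paper's route: Jensen plus Tonelli with fixed-ratio doubling gives \eqref{eq:O-to-J}, and averaging over auxiliary centers whose $2r$-balls contain both points gives \eqref{eq:J-to-O}. The paper packages this through the double-average functional $\mathcal D_r$ and two-sided bounds on its kernel $H_r$; you instead insert the mean $f_{B(w,2r)}$ directly, which merges the paper's steps $J_r\lesssim\mathcal D_{2r}$ and $\mathcal D_{2r}\le 2^p\mathcal O_{2r}$ into one.
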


\begin{proof}
Set
\[
 \mathcal D_r(f):=\frac1{\Psi(r)}\int_X\avint_{B(x,r)}\avint_{B(x,r)}
 |f(y)-f(z)|^p\,dm(z)\,dm(y)\,dm(x).
\]
Jensen's inequality and the inequality
$|a+b|^p\le 2^{p-1}(|a|^p+|b|^p)$ give
\begin{equation}\label{eq:O-D}
 \mathcal O_r(f)\le\mathcal D_r(f)\le2^p\mathcal O_r(f).
\end{equation}
By Tonelli's theorem,
\[
 \mathcal D_r(f)=\frac1{\Psi(r)}\iint_{X\times X}|f(y)-f(z)|^p
 H_r(y,z)\,dm(z)\,dm(y),
\]
where
\[
 H_r(y,z):=\int_X
 \frac{\mathbf1_{B(x,r)}(y)\mathbf1_{B(x,r)}(z)}{V(x,r)^2}\,dm(x).
\]
The kernel vanishes when $d(y,z)\ge2r$.  For $x\in B(y,r)$, volume
doubling gives $V(x,r)\asymp V(y,r)$, and hence
\[
 H_r(y,z)\lesssim\frac{\mathbf1_{B(y,2r)}(z)}{V(y,2r)}.
\]
Together with $\Psi(2r)\lesssim\Psi(r)$, this proves
$\mathcal D_r(f)\lesssim J_{2r}(f)$ and therefore \eqref{eq:O-to-J}.
Conversely, if $d(y,z)<r$, then every $x\in B(y,r/2)$ satisfies
$y,z\in B(x,2r)$, and fixed-scale volume comparison gives
$H_{2r}(y,z)\gtrsim V(y,r)^{-1}$.  Therefore
\[
 J_r(f)\lesssim\frac{\Psi(2r)}{\Psi(r)}\mathcal D_{2r}(f)
 \lesssim\mathcal O_{2r}(f),
\]
which proves \eqref{eq:J-to-O}.
\end{proof}

\begin{proof}[Proof of Theorem~\ref{thm:two-scale}]
Let $f\in L^p(X;m)$ and $0<r<s$.  Integrating \eqref{eq:mPI} in $x$ and
applying Tonelli's theorem gives
\[
 \mathcal O_s(f)\le C_M\int_XM_f(y,r)
 \left(\int_{B(y,A_Ms)}\frac{dm(x)}{V(x,A_Ms)}\right)dm(y).
\]
For $x\in B(y,A_Ms)$, volume doubling gives
$V(y,A_Ms)\lesssim V(x,A_Ms)$, so the inner integral is uniformly
bounded.  This proves
\begin{equation}\label{eq:O-WM}
 \mathcal O_s(f)\le C\mathcal O_r(f),
\end{equation}
with $C$ depending only on $C_D,C_M$, and $A_M$.  Lemma~\ref{lem:comparison},
applied at scales $s$ and $r/2$, now yields
\[
 J_s(f)\lesssim\mathcal O_{2s}(f)
 \lesssim\mathcal O_{r/2}(f)\lesssim J_r(f),
\]
which is \eqref{eq:J-WM}.  For each fixed $s>0$, let $r\downarrow0$ in
\eqref{eq:J-WM}, and then take the supremum over $s$ to obtain
\eqref{eq:WM}.
\end{proof}

\subsection{Connectedness and lower scaling}\label{sec:scaling}

We now prove Proposition~\ref{prop:automatic-LS}, which requires no
energy form, chain condition, or doubling assumption on~$\Psi$.
Yang~\cite[Proposition~2.1 and Remark~2.2]{Yang2026} proved the lower
$p$-scaling \eqref{eq:automatic-LS} directly from a chain condition,
volume doubling, $\mathrm{PI}(\Psi)$, and the capacity bound.
Proposition~\ref{prop:automatic-LS} derives the same conclusion from~\eqref{eq:mPI}, with connectedness also obtained as a consequence.  To our knowledge, this implication from the macroscopic Poincar\'e inequality is new in this generality.  The proof first establishes connectedness and then applies~\eqref{eq:mPI} to a Lipschitz tent function.

\begin{proof}[Proof of Proposition~\ref{prop:automatic-LS}]
By the metric-measure observation in
Subsection~\ref{sec:metric-framework}, $X$ is proper.
We first show that $X$ is connected.  If
$X=U\mathbin{\dot\cup}W$ were a nontrivial clopen separation, choose
$x\in U$, $y\in W$, and $s>d(x,y)$.  Put
\[
 g:=\mathbf1_{U\cap B(x,(A_M+2)s)}\in L^p(X;m).
\]
The sets
\[
 U\cap\overline{B(x,(A_M+1)s)}
 \quad\text{and}\quad
 W\cap\overline{B(x,(A_M+1)s)}
\]
are nonempty disjoint compact sets.  Let $\Delta>0$ be their distance.
For $0<r<\min\{s,\Delta/2\}$, we have
\[
 M_g(z,r)=0,\qquad z\in B(x,A_Ms).
\]
On the other hand, $B(x,s)$ meets both $U$ and $W$ in sets of positive
measure, by full support, so $M_g(x,s)>0$.  This contradicts
\eqref{eq:mPI}.  Thus $X$ is connected.

Fix $0<r<R<\diam X$.  Choose $x,y\in X$ with $d(x,y)>R$.
Since the image of $w\mapsto d(x,w)$ is connected, there is a point $z$
with $d(x,z)=R/2$.  Consider the compactly supported $1$-Lipschitz function
\[
 f_R(w):=\bigl(R/8-d(x,w)\bigr)_+\in L^p(X;m).
\]
On $E=B(x,R/32)$ one has $f_R\ge3R/32$, whereas $f_R=0$ on
$F=B(z,R/32)$.  Both balls lie in $B=B(x,R)$, and volume doubling gives
\begin{equation}\label{eq:two-ball-mass}
 m(E),m(F)\ge cV(x,R).
\end{equation}
If $(f_R)_B\le3R/64$, integrate over $E$; otherwise, integrate over $F$.
In either case, \eqref{eq:two-ball-mass} gives
\begin{equation}\label{eq:tent-oscillation-lower}
 M_{f_R}(x,R)\ge c\frac{R^p}{\Psi(R)}.
\end{equation}
On the other hand, the Lipschitz bound implies, for every $v\in X$,
\begin{equation}\label{eq:tent-oscillation-upper}
 M_{f_R}(v,r)\le C\frac{r^p}{\Psi(r)}.
\end{equation}
Indeed, the oscillation of $f_R$ on a ball of radius $r$ is at most $2r$.
Applying \eqref{eq:mPI} to $f_R$ at scales $r<R$ and using
\eqref{eq:tent-oscillation-lower}--\eqref{eq:tent-oscillation-upper}, we
obtain
\[
 \frac{R^p}{\Psi(R)}\le C\frac{r^p}{\Psi(r)},
\]
which is \eqref{eq:automatic-LS} for $R<\diam X$.  If
$\diam X<\infty$, let $R\uparrow\diam X$ and use continuity of $\Psi$.
The case $r=R$ is immediate.
\end{proof}

\begin{corollary}
\label{cor:forward-LS}
Under the hypotheses of Theorem~\ref{thm:main},
\[
 \frac{\Psi(r)}{\Psi(R)}
 \le C\left(\frac rR\right)^p
\]
whenever $0<r\le R<\infty$ and, if $\diam X<\infty$,
$R\le\diam X$.  The constant depends only on
$p,C_D,C_\Psi,C_P,A_P,C_c$, and $A_c$.
\end{corollary}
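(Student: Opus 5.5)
This is a direct combination of Theorem~\ref{thm:main} and Proposition~\ref{prop:automatic-LS}. Under Assumption~\ref{ass:E}, \eqref{eq:VD}, \eqref{eq:PI} and \eqref{eq:cap}, Theorem~\ref{thm:main} gives \eqref{eq:mPI}. Its constants $C_M=CC_D(A_M)$ and $A_M=A_P+1$ depend only on $p,C_D,C_\Psi,C_P,A_P,C_c,A_c$.

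Next I would check the hypotheses of Proposition~\ref{prop:automatic-LS}.
\begin{itemize}
\item $(X,d)$ is a nontrivial complete metric space.
\item $m$ is a full-support Radon measure with balls of positive finite measure.
\item $\Psi$ is an increasing homeomorphism of $[0,\infty)$. A homeomorphism of $[0,\infty)$ onto itself is strictly monotone and fixes $0$, so it is increasing.
\item \eqref{eq:VD} holds.
\end{itemize}
These are exactly the standing hypotheses of Subsection~\ref{sec:metric-framework}, so all of them are in force under the hypotheses of Theorem~\ref{thm:main}.

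Applying Proposition~\ref{prop:automatic-LS} then gives $\Psi(r)/\Psi(R)\le C(r/R)^p$ for $0<r\le R<\infty$, with $R\le\diam X$ when $X$ is bounded. Its constant depends only on $p,C_D,C_M,A_M$, hence only on the listed structural data. The doubling of $\Psi$ is used only inside Theorem~\ref{thm:main}, through the dependence on $C_\Psi$.

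The only point that needs care is bookkeeping the constants through the two results. I expect no genuine obstacle there.
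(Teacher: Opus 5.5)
Your proposal is correct and matches the paper's proof, which likewise combines Theorem~\ref{thm:main} (giving \eqref{eq:mPI} with $C_M,A_M$ depending only on the structural data) with Proposition~\ref{prop:automatic-LS}. Your check that the standing metric-measure hypotheses and monotonicity of $\Psi$ meet that proposition's assumptions is exactly the bookkeeping the corollary requires.
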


\begin{proof}
Combine Theorem~\ref{thm:main} with
Proposition~\ref{prop:automatic-LS}.
\end{proof}

\begin{remark}\label{rem:O-regular}
Under the hypotheses of Theorem~\ref{thm:main},
Corollary~\ref{cor:forward-LS} and \eqref{eq:Psi-fixed} give
\[
 C^{-1}\left(\frac Rr\right)^p
 \le\frac{\Psi(R)}{\Psi(r)}
 \le C_\Psi\left(\frac Rr\right)^{\log_2C_\Psi}
\]
whenever $0<r\le R<\infty$ and, if $\diam X<\infty$, $R\le\diam X$.  The
upper bound follows from
$C_\Psi(R/r)\le C_\Psi^{1+\log_2(R/r)}$.  Thus every scale function
admissible in the forward results is of two-sided power type, with lower
exponent $p$ and upper exponent $\log_2C_\Psi$; letting $r\downarrow0$
with $R$ fixed shows in particular that $C_\Psi\ge2^p$.  These bounds allow nonpower scales, including the tree and Laakso-type
examples of Subsection~\ref{sec:examples}.  For $\Psi(r)=r^\beta$, combining
Shimizu~\cite[Theorem~4.6]{ShimizuBesov} with
Baudoin~\cite[Theorem~4.1]{BaudoinKSS} also gives $\beta\ge p$;
Proposition~\ref{prop:automatic-LS} gives relative scaling for a general
$\Psi$.
\end{remark}

\subsection{Identification of the energy domain}\label{subsec:domain-id}

We recall the domain characterization of Shimizu~\cite[Corollaries~1.14 and~1.15]{ShimizuBesov}, whose argument applies under the hypotheses of Theorem~\ref{thm:main} with the adjustments described below.  It gives
\begin{equation}\label{eq:domain}
 \F=\{f\in L^p(X;m):\liminf_{r\downarrow0}J_r(f)<\infty\}=\B_\Psi,
\end{equation}
and
\begin{equation}\label{eq:energy-equiv}
 \E(f)\asymp\liminf_{r\downarrow0}J_r(f)\asymp\sup_{r>0}J_r(f),\qquad f\in\F.
\end{equation}
The comparison constants depend only on~$p,C_D,C_\Psi,C_P,A_P,C_c$, and~$A_c$.

Shimizu's framework allows noncompact spaces, singular energy measures, and nonpower scale functions.  His two-sided power control of~$\Psi$ follows here from doubling and Corollary~\ref{cor:forward-LS}, at scales not exceeding~$\diam X$; see Remark~\ref{rem:O-regular}.  Proposition~\ref{prop:automatic-LS} gives connectedness, and hence uniform perfectness, while~$\mathrm{PI}(\Psi)$ gives~$\E^{-1}(0)\subset\mathbb R\mathbf1$.  Closedness and the Clarkson inequalities give reflexivity of the energy domain~\cite[Proposition~3.13]{KajinoShimizuContraction}.

For the discrete convolution argument in~\cite[Lemmas~3.10--3.11 and proof of Corollary~1.15]{ShimizuBesov}, Lemma~\ref{lem:partition} supplies the required partition under our normal-contraction and algebra assumptions.  The localized seminorm property and~\ref{G4} permit its use with measure-theoretic supports; continuity of the partition functions is not needed for the approximation and weak-compactness argument.  If~$X$ is bounded, only small radii enter the domain identification, and~$\mathrm{PI}(\Psi)$ controls the supremum over the remaining radii.  We use~\eqref{eq:domain}--\eqref{eq:energy-equiv} without repeating the proof.  On the Sierpi\'nski carpet, the corresponding identification is also given in~\cite[Theorem~1.4]{MuruganShimizuCarpet}.

\section{Construction of a $p$-energy from the macroscopic inequality}
\label{sec:converse}

For the construction we assume the metric-measure hypotheses of
Theorem~\ref{thm:converse}, volume doubling, doubling of $\Psi$,
\eqref{eq:mPI}, and the core condition \eqref{eq:Core}.  We construct an
energy form from these data.

\subsection{The ball kernel and the Korevaar--Schoen energy}

For $r>0$, define
\begin{equation}\label{eq:kernel}
 k_r(x,y):=\frac{\mathbf1_{B(x,r)}(y)}{\Psi(r)V(x,r)}.
\end{equation}
The measurability observation in Subsection~\ref{sec:metric-framework}
shows that $k_r$ is Borel.  The standing volume assumptions give
$0<V(x,r)<\infty$.  The associated nonlocal functional is $J_r$, and the
kernel is asymptotically local with
$\delta(r)=r$.

By Theorem~\ref{thm:two-scale},
\[
 \sup_{s>0}J_s(f)\le C\liminf_{r\downarrow0}J_r(f),\qquad f\in\B_\Psi.
\]
This is the weak monotonicity condition in
Sasaya~\cite[Definition~8.1]{Sasaya2026}.
By Sasaya~\cite[Theorem~8.3]{Sasaya2026}, based on the construction of
Kajino--Shimizu~\cite[Theorems~6.3.8 and~6.4.2]{KajinoShimizuKS}, whose
hypotheses (local compactness and separability, a full-support Radon
measure, asymptotic locality of the kernel, and weak monotonicity) are
satisfied here, there exists a sequence $r_n\downarrow0$ for which
\[
 \widetilde{\E}_{\KS}(f):=\lim_{n\to\infty}J_{r_n}(f),
 \qquad f\in\B_\Psi,
\]
is well defined and satisfies
\begin{equation}\label{eq:KS-equiv}
 C^{-1}\sup_{r>0}J_r(f)
 \le\widetilde{\E}_{\KS}(f)
 \le C\liminf_{r\downarrow0}J_r(f).
\end{equation}
Under \eqref{eq:Core}, Sasaya's theorem shows that
\[
 \F_{\KS}:=\overline{\B_\Psi\cap C_c(X)}^{\,\|\cdot\|_{\B_\Psi}}
\]
and
\[
 \E_{\KS}:=\left.\widetilde{\E}_{\KS}\right|_{\F_{\KS}}
\]
form a regular strongly local $p$-energy with the full $p$-Clarkson
properties \cite[Theorem~8.3]{Sasaya2026}.  The generalized
$p$-contraction property follows from
\cite[Theorem~6.3.8(b)]{KajinoShimizuKS}.  The Markov and algebra properties
follow from Kajino--Shimizu
\cite[Proposition~6.2.3, items~(2) and~(4)]{KajinoShimizuKS}.  The
associated measure $\GpKS(u)$ is identified in
\cite[Corollary~8.4]{Sasaya2026}.

\begin{lemma}\label{lem:localized-comparison}
For a measurable set $U\subset X$ and $f\in L^p(X;m)$, define, for $r>0$,
\[
 J_r(f;U):=\frac1{\Psi(r)}\int_U\avint_{B(y,r)}|f(y)-f(z)|^p\,dm(z)\,dm(y),
\]
and, for $q>0$, put $U_q:=\{y\in X:d(y,U)<q\}$.  Then for every $q>0$,
\begin{equation}\label{eq:localized-comparison}
 \int_UM_f(x,q)\,dm(x)\le CJ_{2q}(f;U_q),
\end{equation}
where $C$ depends only on $p,C_D,C_\Psi$.
\end{lemma}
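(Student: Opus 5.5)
The plan is to repeat the first half of the proof of Lemma~\ref{lem:comparison}, this time keeping track of where the integration variables lie. For $x\in U$, Jensen's inequality gives
\[
 M_f(x,q)\le\frac1{\Psi(q)}\avint_{B(x,q)}\avint_{B(x,q)}|f(y)-f(z)|^p\,dm(z)\,dm(y).
\]
Integrating over $x\in U$ and using Tonelli's theorem, we obtain
\[
 \int_UM_f(x,q)\,dm(x)\le\frac1{\Psi(q)}\iint_{X\times X}|f(y)-f(z)|^p H^U_q(y,z)\,dm(z)\,dm(y),
\]
where
\[
 H^U_q(y,z):=\int_U\frac{\mathbf 1_{B(x,q)}(y)\mathbf 1_{B(x,q)}(z)}{V(x,q)^2}\,dm(x).
\]

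Next we bound the kernel $H^U_q$. It vanishes unless $y\in U_q$, since $y\in B(x,q)$ with $x\in U$ forces $d(y,U)<q$. It also vanishes unless $d(y,z)<2q$. For $x\in B(y,q)$, volume doubling gives $V(y,2q)\le V(x,3q)\le C_D(3)V(x,q)$, so the integrand is at most $C\,V(y,2q)^{-2}$. Moreover the domain of integration lies in $B(y,q)$, which has measure at most $V(y,2q)$. Together these give
\[
 H^U_q(y,z)\le C\,\mathbf 1_{U_q}(y)\,\frac{\mathbf 1_{B(y,2q)}(z)}{V(y,2q)}.
\]

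Inserting this bound and using $\Psi(2q)\le C_\Psi\Psi(q)$ to replace $1/\Psi(q)$ by $C_\Psi/\Psi(2q)$ yields exactly $C\,J_{2q}(f;U_q)$, with $C$ depending only on $p$, $C_D$ and $C_\Psi$.

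Measurability of $H^U_q$ follows from the Borel observation in Subsection~\ref{sec:metric-framework}. The only step needing care is attaching the localization indicator $\mathbf 1_{U_q}$ to the $y$-variable, because that is the outer variable in $J_{2q}(f;U_q)$. This is handled by the support observation above. No real obstacle is expected beyond that bookkeeping.
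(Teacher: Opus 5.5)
Your proposal is correct and is the paper's proof: you restrict the outer variable of the kernel argument from Lemma~\ref{lem:comparison} to $U$, note that the kernel is supported where $y\in U_q$ and $d(y,z)<2q$, bound it by $C\,\mathbf 1_{U_q}(y)\mathbf 1_{B(y,2q)}(z)/V(y,2q)$ by volume comparison, and conclude with the doubling of $\Psi$. Your constant, $C_D(3)^2C_\Psi$, actually does not depend on $p$, since the Jensen step costs no constant.
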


\begin{proof}
Restrict the outer variable in the kernel argument of
Lemma~\ref{lem:comparison} to $U$.
The resulting kernel vanishes unless $y\in U_q$ and $d(y,z)<2q$.
Volume comparison bounds it by
\[
 C\frac{\mathbf1_{U_q}(y)\mathbf1_{B(y,2q)}(z)}{V(y,2q)}.
\]
The doubling of $\Psi$ gives \eqref{eq:localized-comparison}.
\end{proof}

\subsection{Recovery of the Poincar\'e inequality}

\begin{proof}[Proof of Theorem~\ref{thm:converse}]
It remains to prove \eqref{eq:converse-PI}.  First let
$u\in\B_\Psi\cap C_c(X)$, fix $x\in X$, $s>0$, and put
$U=B(x,A_Ms)$, where $A_M$ is the dilation in \eqref{eq:mPI}.  Set
$q_n=r_n/2$.  For all sufficiently large $n$, $q_n<s$, hence
\begin{equation}\label{eq:converse-step1}
 M_u(x,s)
 \le\frac{C}{V(x,A_Ms)}\int_UM_u(y,q_n)\,dm(y)
 \le\frac{C}{V(x,A_Ms)}J_{r_n}(u;U_{q_n}),
\end{equation}
by \eqref{eq:mPI} and Lemma~\ref{lem:localized-comparison}.  For large $n$,
$U_{q_n}\subset B(x,(A_M+1)s)$.  Since this ball has compact closure, choose
$\phi\in C_c(X)$ with $0\le\phi\le1$,
\[
 \phi=1\text{ on }\overline{B(x,(A_M+1)s)},\qquad
 \operatorname{supp}\phi\subset B(x,(A_M+2)s).
\]
Then
\begin{equation}\label{eq:converse-step2}
 J_{r_n}(u;U_{q_n})
 \le\frac1{\Psi(r_n)}\int_X\phi(y)\avint_{B(y,r_n)}
 |u(y)-u(z)|^p\,dm(z)\,dm(y).
\end{equation}
Define the measures
\[
 d\nu_n^u(y):=\frac1{\Psi(r_n)}
 \avint_{B(y,r_n)}|u(y)-u(z)|^p\,dm(z)\,dm(y).
\]
Kajino--Shimizu~\cite[Theorem~6.4.2, formula~(6.4.7)]{KajinoShimizuKS}
show that, along the same sequence $(r_n)$ as in the construction of
$\widetilde{\E}_{\KS}$, the integrals $\int_X\varphi\,d\nu_n^u$ converge
whenever $u\in\B_\Psi\cap C_b(X)$, $\varphi\in\B_\Psi\cap L^\infty(X;m)$,
and one of $u,\varphi$ has compact support; for
$\varphi\in\B_\Psi\cap C_c(X)$, the limit is the integral of $\varphi$
against the $p$-energy measure of $u$.  Since $u\in\B_\Psi\cap C_c(X)$,
this gives
\[
 \int_X\varphi\,d\nu_n^u
 \longrightarrow
 \int_X\varphi\,d\GpKS(u),
 \qquad \varphi\in\B_\Psi\cap C_c(X),
\]
where the identification of the limiting measure with the canonical
energy measure $\GpKS(u)$ is \cite[Theorem~8.3 and Corollary~8.4]{Sasaya2026}.
By \eqref{eq:Core}, choose $\phi_\ell\in\B_\Psi\cap C_c(X)$ with
$\|\phi_\ell-\phi\|_\infty\to0$.  Since
\[
 \sup_n\nu_n^u(X)=\sup_nJ_{r_n}(u)<\infty,
 \qquad \GpKS(u)(X)=\E_{\KS}(u)<\infty,
\]
uniform approximation extends the preceding convergence to $\phi$.
Passing to the limit in \eqref{eq:converse-step1} and \eqref{eq:converse-step2}
gives
\[
 M_u(x,s)\le\frac{C}{V(x,A_Ms)}
 \GpKS(u)(B(x,(A_M+2)s)).
\]
Multiplying by $\Psi(s)V(x,s)$ and using
$V(x,s)\le V(x,A_Ms)$ yields
\begin{equation}\label{eq:converse-core-PI}
 \int_{B(x,s)}|u-u_{B(x,s)}|^p\,dm
 \le C\Psi(s)\GpKS(u)(B(x,(A_M+2)s)).
\end{equation}

Finally, let $u\in\F_{\KS}$ and choose
$u_j\in\B_\Psi\cap C_c(X)$ converging to $u$ in $\|\cdot\|_{\B_\Psi}$.
By \eqref{eq:KS-equiv}, $u_j\to u$ in $L^p$ and
$\E_{\KS}(u_j-u)\to0$.  The left-hand side of
\eqref{eq:converse-core-PI} converges because
\[
 \|(u_j-u)-(u_j-u)_{B(x,s)}\|_{L^p(B(x,s))}
 \le2\|u_j-u\|_{L^p(B(x,s))}.
\]
For any Borel set $E$, the seminorm property of the canonical energy measure
gives
\[
 \left|\GpKS(u_j)(E)^{1/p}-\GpKS(u)(E)^{1/p}\right|
 \le\GpKS(u_j-u)(E)^{1/p}
 \le\E_{\KS}(u_j-u)^{1/p}\to0.
\]
Thus \eqref{eq:converse-core-PI} passes to the limit and proves
\eqref{eq:converse-PI} with $A=A_M+2$.
\end{proof}

\subsection{The core condition and reconstruction}

The macroscopic inequality is monotone in the scale function.

\begin{remark}\label{rem:mPI-monotone}
Let $\Phi:[0,\infty)\to[0,\infty)$ be nondecreasing with $\Phi>0$ on
$(0,\infty)$, and write $M_f^{\Psi}$ for the quantity \eqref{eq:M}
computed with the scale $\Psi$.  If \eqref{eq:mPI} holds for $\Psi$, then
it holds for $\Psi\Phi$ with the same constants $C_M$ and $A_M$.  Indeed,
for $0<r<s$, since $\Phi(r)\le\Phi(s)$,
\[
 M_f^{\Psi\Phi}(x,s)=\frac{M_f^{\Psi}(x,s)}{\Phi(s)}
 \le C_M\avint_{B(x,A_Ms)}\frac{M_f^{\Psi}(y,r)}{\Phi(s)}\,dm(y)
 \le C_M\avint_{B(x,A_Ms)}M_f^{\Psi\Phi}(y,r)\,dm(y).
\]
Hence \eqref{eq:mPI} alone does not determine the scale: it only forces
$\Psi$ to grow at least like $r^p$, by
Proposition~\ref{prop:automatic-LS}.  In Theorem~\ref{thm:converse}, it is
the density condition \eqref{eq:Core} that excludes scale functions which
grow too fast.
\end{remark}

In particular, the condition \eqref{eq:Core} does not follow from
\eqref{eq:mPI} alone.  On $[0,1]$ with Lebesgue measure,
$\mathrm{mPI}(r^p)$ holds by Theorem~\ref{thm:main} applied to the
standard $p$-energy, and Remark~\ref{rem:mPI-monotone} gives
$\mathrm{mPI}(r^\beta)$ for every $\beta\ge p$.  For $\beta>p$, however,
the corresponding supercritical Korevaar--Schoen class contains only
constants, since $[0,1]$ is a connected Poincar\'e space; see
Kajino--Shimizu~\cite[Introduction]{KajinoShimizuKS} and
Baudoin~\cite{BaudoinKSS}.
Thus its continuous finite-energy core is not uniformly dense in
$C([0,1])$.

\begin{remark}
\label{rem:cutoff-core}
Note that the core condition~\eqref{eq:Core} holds if, for every $x\in X$ and every relatively compact open
neighborhood $U$ of $x$ there exists $\eta\in C_c(X)$ such that
\begin{equation}\label{eq:continuous-KS-cutoff}
\begin{aligned}
 0&\le \eta\le 1,\\
 \eta&=1\ \text{on an open neighborhood of }x,
 \qquad \operatorname{supp}\eta\subset U,\\
 &\sup_{r>0}J_r(\eta)<\infty.
\end{aligned}
\end{equation}
Indeed, for every $r>0$, Minkowski's inequality gives
\[
J_r(fg)^{1/p}
\le \|f\|_\infty J_r(g)^{1/p}
+\|g\|_\infty J_r(f)^{1/p},\qquad f,g\in\B_\Psi\cap C_c(X).
\]
Thus $\B_\Psi\cap C_c(X)$ is a real algebra.  The cutoffs make it vanish nowhere and separate points: for distinct $x,y$, choose $U$ with $y\notin U$, so that $\eta(x)=1$ and $\eta(y)=0$.  The locally compact real Stone--Weierstrass theorem gives~\eqref{eq:Core}.  Under the hypotheses of Theorem~\ref{thm:two-scale}, the last condition in~\eqref{eq:continuous-KS-cutoff} may be replaced by $\liminf_{r\downarrow0}J_r(\eta)<\infty$, by~\eqref{eq:WM}.
\end{remark}

\begin{corollary}
\label{cor:reconstruction}
Under the hypotheses of Theorem~\ref{thm:main}, the core condition
\eqref{eq:Core} holds and the energy in Theorem~\ref{thm:converse} may be
constructed on the original domain $\F$.  Moreover,
\[
 \E_{\KS}(f)\asymp\E(f),\qquad f\in\F.
\]
\end{corollary}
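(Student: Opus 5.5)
Under the hypotheses of Theorem~\ref{thm:main}, the plan has three steps: verify \eqref{eq:Core}, identify the constructed domain $\F_{\KS}$ with $\F$, and then compare the two energies.

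\emph{Step 1: the core condition.} By Theorem~\ref{thm:main}, \eqref{eq:mPI} holds. By \eqref{eq:Psi-doubling}, the hypotheses of Theorem~\ref{thm:two-scale} and Theorem~\ref{thm:converse} are met except for \eqref{eq:Core}. For \eqref{eq:Core} we use the domain identification \eqref{eq:domain}, which gives $\F=\B_\Psi$. Regularity of $(\E,\F)$ says that $\F\cap C_c(X)$ is uniformly dense in $C_c(X)$. Since $\F\cap C_c(X)=\B_\Psi\cap C_c(X)$, this is exactly \eqref{eq:Core}. Alternatively, one can build cutoffs as in Lemma~\ref{lem:constants} and apply Remark~\ref{rem:cutoff-core}, but the direct route suffices.

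\emph{Step 2: identification of domains.} Theorem~\ref{thm:converse} now provides $r_n\downarrow0$ and $(\E_{\KS},\F_{\KS})$ with $\F_{\KS}$ the $\|\cdot\|_{\B_\Psi}$-closure of $\B_\Psi\cap C_c(X)=\F\cap C_c(X)$. By \eqref{eq:energy-equiv}, on $\F=\B_\Psi$ the norm $\|\cdot\|_{\B_\Psi}$ is equivalent to $\|\cdot\|_{L^p}+\E(\cdot)^{1/p}$. Regularity makes $\F\cap C_c(X)$ dense in $\F$ for this norm, and closedness of $(\E,\F)$ together with $\F=\B_\Psi$ shows that $\F$ is complete. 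Hence the closure is all of $\F$, so $\F_{\KS}=\F$.

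\emph{Step 3: energy comparison.} For $f\in\F$, \eqref{eq:KS-energy} gives $\E_{\KS}(f)\asymp\sup_{r>0}J_r(f)$, and \eqref{eq:energy-equiv} gives $\sup_{r>0}J_r(f)\asymp\E(f)$. Chaining these yields $\E_{\KS}(f)\asymp\E(f)$, with constants depending only on the structural data, since $C_M$ and $A_M$ do.

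The main obstacle is Step 2. It requires checking that regularity, which is usually phrased as density of $\F\cap C_c(X)$ in $\F$ for the norm $(\|\cdot\|_{L^p}^p+\E(\cdot))^{1/p}$, transfers to $\|\cdot\|_{\B_\Psi}$-density. I would handle this by applying the norm equivalence from \eqref{eq:energy-equiv} to differences $f-g$, which is legitimate because $\E^{1/p}$ and $(\sup_r J_r)^{1/p}$ are both seminorms.
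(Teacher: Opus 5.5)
Your proposal is correct and follows the paper's proof essentially step for step. Like the paper, you take $\F=\B_\Psi$ with equivalent norms from \eqref{eq:domain}--\eqref{eq:energy-equiv}, use regularity to get both \eqref{eq:Core} and density of $\F\cap C_c(X)$ in $\F$ (hence $\F_{\KS}=\F$), and chain \eqref{eq:KS-equiv} (equivalently \eqref{eq:KS-energy}) with \eqref{eq:energy-equiv} to compare the energies. The completeness remark in your Step 2 is harmless but not needed: any $\|\cdot\|_{\B_\Psi}$-limit already lies in $\B_\Psi=\F$, so density alone gives the equality.
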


\begin{proof}
Equations~\eqref{eq:domain} and~\eqref{eq:energy-equiv} give $\B_\Psi=\F$ and equivalence of
$\|\cdot\|_{\B_\Psi}$ with the graph norm.  Regularity of the original energy
implies that $\F\cap C_c(X)$ is uniformly dense in $C_c(X)$ and graph-norm
dense in $\F$.  Hence \eqref{eq:Core} holds, $\F_{\KS}=\F$, and the energy
equivalence follows from~\eqref{eq:KS-equiv} and~\eqref{eq:energy-equiv}.
\end{proof}

\begin{remark}
\label{rem:converse-capacity}
Theorem~\ref{thm:converse} gives $\mathrm{PI}(\Psi)$ for the reconstructed
form, but does not assert $\mathrm{cap}(\Psi)_{\le}$.  The cutoff criterion
in Remark~\ref{rem:cutoff-core} gives finite reconstructed energy by
\eqref{eq:KS-equiv}, without the uniform bound
$V(x,R)/\Psi(R)$ required by~\eqref{eq:cap}.
\end{remark}

\section{Weak-type characterization of the energy domain}\label{sec:weak}

\subsection{The pairwise upper estimate}

The next lemma refines the usual dyadic Poincar\'e telescoping argument
by retaining the energy measure on each ball.  Compare Yang
\cite[Lemma~3.4]{Yang2026} and, for the point-to-mean step
\eqref{eq:point-to-mean}, Dai--Lin--Yang--Yuan--Zhang
\cite[arXiv v1, Lemma~2.5]{DLYYZ}.

\begin{lemma}\label{lem:pairwise-telescope}
Assume \eqref{eq:VD}, \eqref{eq:Psi-doubling}, \eqref{eq:PI}, and
$\mathrm{LS}(\beta)$ for some $\beta>0$, with constant $C_L$.  There are
$A\ge1$, $\gamma>0$, and $C<\infty$ such that, for every $f\in\F$ and for
$m\otimes m$-almost every $(x,y)$ with $R:=d(x,y)>0$,
\begin{equation}\label{eq:pairwise-telescope}
 |f(x)-f(y)|^p
 \le C\Psi(R)\{\mathcal S_f(x,R)+\mathcal S_f(y,R)\},
\end{equation}
where
\begin{equation}\label{eq:S-pairwise}
 \mathcal S_f(x,R)
 :=\sum_{j=0}^\infty 2^{-j\gamma}
 \frac{\Gp(f)(B(x,A\,2^{-j}R))}{V(x,2^{-j}R)}.
\end{equation}
One may take $\gamma=\beta/2$.  The constants depend only on
$p,C_D,C_\Psi,C_P,A_P,C_L$, and $\beta$.
\end{lemma}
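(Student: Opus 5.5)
We plan to run the standard dyadic telescoping argument, keeping the energy measure on each ball rather than passing to a maximal function. Fix $f\in\F$. By the Lebesgue differentiation theorem (available under \eqref{eq:VD}), for $m$-a.e.\ $x$ we have $f_{B(x,\rho)}\to f(x)$ as $\rho\downarrow0$. Hence the set of pairs $(x,y)$ where both $x$ and $y$ are Lebesgue points has full $m\otimes m$ measure, and we restrict to such pairs. Set $R=d(x,y)$ and $B_j:=B(x,2^{-j}R)$ for $j\ge0$. Write $B^y_j$ for the analogous balls centred at $y$, and put $\widehat B:=B(x,2R)$, which contains both $B_0$ and $B^y_0$.

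The first step is the point-to-mean bound \eqref{eq:point-to-mean}:
\[
 |f(x)-f_{B_0}|\le\sum_{j\ge0}|f_{B_{j+1}}-f_{B_j}|
 \le C\sum_{j\ge0}\Bigl(\avint_{B_j}|f-f_{B_j}|^p\,dm\Bigr)^{1/p},
\]
where the second inequality uses Jensen's inequality and $V(x,2^{-j}R)\le C_DV(x,2^{-j-1}R)$. Applying \eqref{eq:PI} on $B_j$ bounds the $j$-th term by
\[
 \bigl(C\Psi(2^{-j}R)\Gp(f)(A_PB_j)/V(x,2^{-j}R)\bigr)^{1/p}.
\]
Now $\mathrm{LS}(\beta)$ gives $\Psi(2^{-j}R)\le C_L2^{-j\beta}\Psi(R)$. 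We split $2^{-j\beta/p}=2^{-j\beta/(2p)}\cdot2^{-j\beta/(2p)}$ and apply Hölder's inequality in $j$ with exponents $p$ and $p'$. The factor $\sum_j2^{-j\beta p'/(2p)}$ is a finite constant depending on $\beta,p$. The other factor is
\[
 \sum_j2^{-j\beta/2}\,\Gp(f)(A_PB_j)/V(x,2^{-j}R),
\]
which is exactly $\mathcal S_f(x,R)$ with $\gamma=\beta/2$. The same argument, with centres at $y$, gives the bound for $|f(y)-f_{B^y_0}|^p$.

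The second step handles the middle terms $|f_{B_0}-f_{\widehat B}|$ and $|f_{B^y_0}-f_{\widehat B}|$. Volume doubling gives $V(x,R),V(y,R)\asymp V(x,2R)$, so each of these is at most
\[
 C\Bigl(\avint_{\widehat B}|f-f_{\widehat B}|^p\,dm\Bigr)^{1/p}.
\]
By \eqref{eq:PI} and the doubling of $\Psi$, this is at most $C\bigl(\Psi(R)\Gp(f)(B(x,2A_PR))/V(x,R)\bigr)^{1/p}$. The ball $B(x,2A_PR)$ is exactly the $j=0$ ball in $\mathcal S_f(x,R)$ once we choose $A:=2A_P$. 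This same choice covers the telescoping terms, since $A_PB_j\subset B(x,A2^{-j}R)$. Combining the three pieces with $|a+b+c+d|^p\le4^{p-1}(\cdots)$ gives \eqref{eq:pairwise-telescope}.

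The main obstacle is bookkeeping rather than a conceptual gap. The main thing to check is that one dilation $A$ serves both the telescoping balls and the comparison ball $\widehat B$. It also has to be confirmed that the Hölder splitting leaves the energy sum with weights $2^{-j\gamma}$ and $\gamma=\beta/2$, not with a $p$-th power of a sum. Finally, the a.e.\ Lebesgue-point restriction has to be phrased on the product measure. Convergence issues are harmless: if $\mathcal S_f(x,R)=\infty$ the inequality is trivial.
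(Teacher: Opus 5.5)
Your proposal is correct and follows the paper's proof essentially step for step. Both arguments telescope dyadically at Lebesgue points, applying \eqref{eq:PI} on each ball. Both use $\mathrm{LS}(\beta)$ together with the H\"older splitting $2^{-j\beta/p}=2^{-j\beta/(2p)}2^{-j\beta/(2p)}$ to reach \eqref{eq:point-to-mean} with $\gamma=\beta/2$. Both then apply \eqref{eq:PI} once on a ball containing $B(x,R)$ and $B(y,R)$ to compare the two means.

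The only difference is cosmetic. You compare through $B(x,2R)$ and fix $A=2A_P$ explicitly, whereas the paper uses $B(x,3R)$ and enlarges $A$ by an unspecified fixed factor.
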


\begin{proof}
Use the Lebesgue representative of $f$, which exists almost everywhere by
the differentiation theorem for doubling measures.  Fix a Lebesgue point
$x$, put $r_j=2^{-j}R$ and $B_j=B(x,r_j)$.  Volume doubling, H\"older's
inequality, and \eqref{eq:PI} give
\begin{align*}
 |f_{B_{j+1}}-f_{B_j}|
 &\le C\left(\avint_{B_j}|f-f_{B_j}|^p\,dm\right)^{1/p}\\
 &\le C\Psi(r_j)^{1/p}
 \left(\frac{\Gp(f)(A_PB_j)}{V(x,r_j)}\right)^{1/p}.
\end{align*}
Since $f_{B_j}\to f(x)$ and \eqref{eq:Psi-lower} gives
$\Psi(r_j)\le C_L2^{-j\beta}\Psi(R)$,
\[
 |f(x)-f_{B(x,R)}|
 \le C\Psi(R)^{1/p}
 \sum_{j=0}^\infty2^{-j\beta/p}
 \left(\frac{\Gp(f)(A_PB_j)}{V(x,r_j)}\right)^{1/p}.
\]
Apply H\"older's inequality to the last series after writing
$2^{-j\beta/p}=2^{-j\beta/(2p)}2^{-j\beta/(2p)}$.  This yields
\begin{equation}\label{eq:point-to-mean}
 |f(x)-f_{B(x,R)}|^p
 \le C\Psi(R)\sum_{j=0}^\infty2^{-j\beta/2}
 \frac{\Gp(f)(A_PB_j)}{V(x,r_j)}.
\end{equation}

For Lebesgue points $x,y$ at distance $R$, compare the two means through
$B(x,3R)$.  Applying \eqref{eq:PI} on this ball, using nearby-ball volume
comparison and the fixed-scale estimate \eqref{eq:Psi-fixed}, gives
\[
 |f_{B(x,R)}-f_{B(y,R)}|^p
 \le C\Psi(R)
 \frac{\Gp(f)(B(x,3A_P R))}{V(x,R)}.
\]
Combine this estimate with \eqref{eq:point-to-mean} at $x$ and $y$, enlarge
$A$ by a fixed factor, and use the $p$-triangle inequality.  This proves
\eqref{eq:pairwise-telescope} with $\gamma=\beta/2$.
\end{proof}

\begin{proposition}\label{prop:pairwise-upper}
Under the hypotheses of Lemma~\ref{lem:pairwise-telescope},
\begin{equation}\label{eq:pairwise-upper}
 \mathfrak W_\Psi(f)\le C\E(f),\qquad f\in\F.
\end{equation}
The constant depends only on
$p,C_D,C_\Psi,C_P,A_P,C_L$, and $\beta$.
\end{proposition}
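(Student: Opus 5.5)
We would derive the weak-type bound directly from the pointwise estimate of Lemma~\ref{lem:pairwise-telescope}. Fix $f\in\F$ and $\lambda>0$. Since the first-coordinate and second-coordinate volumes are comparable up to $C_D$, and the set $\{K_{\Psi,f}>\lambda\}$ is essentially symmetric, it suffices to show that
\[
 (m\otimes m)\bigl(\{(x,y):R=d(x,y)>0,\ \mathcal S_f(x,R)>c\lambda V(x,R)\}\bigr)\le C\lambda^{-1}\E(f).
\]
Indeed, \eqref{eq:pairwise-telescope} gives $K_{\Psi,f}(x,y)\le C\{\mathcal S_f(x,R)+\mathcal S_f(y,R)\}/V(x,R)$. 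So $K_{\Psi,f}>\lambda$ forces one of the two terms, after comparing $V(x,R)$ with $V(y,R)$, to exceed $\lambda/(2C)$. The term in $y$ is handled by the same argument with the roles of $x$ and $y$ exchanged, so we only need the term in $x$.

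The key step is a layer decomposition in the distance $R$. For fixed $x$, decompose $y$ into annuli $2^k\le d(x,y)<2^{k+1}$. On each annulus, $V(x,R)\asymp V(x,2^k)$ and $\mathcal S_f(x,R)\lesssim \mathcal S'_f(x,2^k)$, where $\mathcal S'$ denotes the same series with a slightly larger dilation $2A$. This works because $\Gp(f)(B(x,A2^{-j}R))$ is monotone in $R$ and the volumes are comparable under doubling. The $y$-measure of the annulus is at most $V(x,2^{k+1})\lesssim V(x,2^k)$. Hence the measure in question is bounded by
\[
 C\int_X\sum_{k\in\mathbb Z}V(x,2^k)\,\mathbf 1\{\mathcal S'_f(x,2^k)>c\lambda V(x,2^k)\}\,dm(x).
\]

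The main obstacle is to bound this sum by $C\lambda^{-1}\E(f)$ with no loss over the number of scales. A crude Chebyshev bound gives $\lambda^{-1}\int_X\sum_k\mathcal S'_f(x,2^k)\,dm(x)$. The difficulty is that the inner sum over $k$ counts each energy ball once per scale, so it diverges logarithmically.

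To avoid this, we would use the following plan. For fixed $x$, let $k_0(x)$ be the \emph{largest} $k$ for which the indicator is nonzero. Since $V(x,2^k)$ grows geometrically in $k$ (reverse doubling, available because $X$ is connected by Proposition~\ref{prop:automatic-LS}, at least below $\diam X$), we get $\sum_{k\le k_0}V(x,2^k)\lesssim V(x,2^{k_0})<(c\lambda)^{-1}\mathcal S'_f(x,2^{k_0})$. This reduces matters to bounding
\[
 \int_X\sup_{\rho}\frac{\mathcal S'_f(x,\rho)}{V(x,\rho)}\cdot\frac{V(x,\rho)}{1}\,\mathbf 1\{\dots\}\,dm
\]
in weak-$L^1$ form. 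Put differently, we need a weak-type $(1,1)$ bound for the maximal operator
\[
 \mathcal M\Gp(f)(x)=\sup_{\rho}\sum_j 2^{-j\gamma}\,\frac{\Gp(f)(B(x,A2^{-j}\rho))}{V(x,2^{-j}\rho)}.
\]
This operator is dominated by $C$ times the Hardy--Littlewood maximal function of the measure $\Gp(f)$. For that maximal function, the standard Vitali covering argument under \eqref{eq:VD} gives weak type $(1,1)$ even when $\Gp(f)$ is singular.

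However, the volume weight $V(x,2^{k_0})$ is a weight on the $y$-side, not a density in $x$, so the weak-$(1,1)$ bound does not apply directly. We would therefore instead run the Vitali argument directly on the product set. For each $\lambda$, select balls $B(x,A2^{-j}2^{k})$ on which $\Gp(f)(B)\gtrsim\lambda 2^{j\gamma/2}V(x,2^{-j}2^{k})\,V(x,2^k)/V(x,2^{-j}2^k)$. Then take a disjoint subfamily and note that the product set is covered by enlarged balls times balls of comparable size. The geometric factor $2^{-j\gamma/2}$, after splitting off half of the decay, is what makes the sum over $j$ converge. We expect this covering step, which keeps track of both the $j$-series and the product measure, to be the delicate point. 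The remaining steps are routine doubling estimates, and the argument ends with $\mathfrak W_\Psi(f)\le C\,\Gp(f)(X)=C\E(f)$ by \ref{G1}.
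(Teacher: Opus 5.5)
Your reduction, via \eqref{eq:pairwise-telescope} and $V(x,R)\asymp V(y,R)$, to pairs with $\mathcal S_f(x,R)>c\lambda V(x,R)$ is the same as the paper's, and your last paragraph points in the right direction. There is a genuine gap, however: the proposal stops exactly where the proof begins. You leave the covering step as ``the delicate point'', and the one explicit formula you give for it is wrong.

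Your threshold $\lambda 2^{j\gamma/2}V(x,2^{-j}2^k)V(x,2^k)/V(x,2^{-j}2^k)$ simplifies to $\lambda2^{j\gamma/2}V(x,2^k)$, which is missing a volume factor. With it, the enlarged product set around a selected ball $B_i$ has measure $\lesssim V(x_i,2^{-j}2^{k_i})V(x_i,2^{k_i})\lesssim\lambda^{-1}2^{-j\gamma/2}V(x_i,2^{-j}2^{k_i})\,\Gp(f)(B_i)$. The leftover factor $V(x_i,2^{-j}2^{k_i})$ prevents summing to $\E(f)$. Pigeonholing $\mathcal S_f(x,R)>c\lambda V(x,R)$ against $\sum_j2^{-j\gamma/2}<\infty$ gives instead some $j\ge0$ with
\[
 \Gp(f)\bigl(B(x,A2^{-j}R)\bigr)>c_1\lambda\,2^{j\gamma/2}\,V(x,2^{-j}R)\,V(x,R).
\]
Call the set of such pairs $E_{\lambda,j}$.

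Your earlier route should be dropped for more than the weight problem you noticed. Bounding each annulus by $V(x,2^{k+1})$ and summing over $k$ is harmless only under reverse doubling. Connectedness is not available here either: Proposition~\ref{prop:automatic-LS} needs \eqref{eq:mPI}, which the paper derives only with \eqref{eq:cap}, and that is not a hypothesis of Lemma~\ref{lem:pairwise-telescope}.

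The missing idea is to fix $j$ \emph{before} covering. For $(x,y)\in E_{\lambda,j}$, the set $B(x,A2^{-j}R)\times B(x,C_0R)$ with $C_0>A$ contains $(x,y)$, and its two radii have the fixed ratio $A2^{-j}/C_0$. This is what lets one selection control both factors. If you mixed levels, comparing first-coordinate radii would say nothing about the $y$-side.

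The paper realizes these sets as balls centered at diagonal points $(x,x)$ for the anisotropic metric $\rho_j((u,v),(u',v'))=\max\{2^jA^{-1}d(u,u'),C_0^{-1}d(v,v')\}$. It restricts first to $B(o,N)^2$ so that radii are bounded, and applies the $5$-covering lemma in $(X\times X,\rho_j)$. A Vitali selection among the first-coordinate balls in $X$ would work equally well, since for fixed $j$ the second radius is determined by the first.

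The dilated selected sets then cover $E_{\lambda,j}$ itself, so no sum over scales and no reverse doubling is needed. Each dilate has measure $\lesssim V(x_i,2^{-j}R_i)V(x_i,R_i)\le C2^{-j\gamma/2}\lambda^{-1}\Gp(f)(B(x_i,A2^{-j}R_i))$. The first-coordinate balls are pairwise disjoint: in the paper's version, $B(x_i,A2^{-j}R_i)\subset B(x_i,C_0R_i)$, so a common point $z$ would put $(z,z)$ in two disjoint product balls. Hence their $\Gp(f)$-masses sum to at most $\E(f)$, even when $\Gp(f)$ is singular. This gives $(m\otimes m)(E_{\lambda,j})\le C2^{-j\gamma/2}\lambda^{-1}\E(f)$, and summing over $j$ and adding the symmetric term finishes the proof.
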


\begin{proof}
The anisotropic product metric, dyadic index extraction, and $5$-covering
argument follow the scheme of Dai--Lin--Yang--Yuan--Zhang
\cite[arXiv v1, proof of Theorem~2.4, in particular (2.19)--(2.24)]{DLYYZ}; see also
the covering argument of Brezis--Van Schaftingen--Yung
\cite[Propositions~2.1--2.2]{BVSY2021}.  The essential modification is that
the disjoint first-coordinate balls allow us to sum the possibly singular
measure $\Gp(f)$ directly.

Fix a Borel representative of $f\in\F$ and $\lambda>0$, and write
$D_\lambda=\{K_{\Psi,f}>\lambda\}$.  By volume doubling,
\begin{equation}\label{eq:nearby-pair-volume}
 V(x,R)\asymp V(y,R)\qquad\text{when }R=d(x,y)>0.
\end{equation}
Lemma~\ref{lem:pairwise-telescope} shows, up to a null set, that
$D_\lambda$ is contained in the union of the two sets
\[
 \mathcal S_f(x,R)>c\lambda V(x,R),
 \qquad
 \mathcal S_f(y,R)>c\lambda V(y,R).
\]
Interchanging $x$ and $y$ gives the same estimate for the second set, so
it suffices to consider the first.

Put $\delta=\gamma/2$.  If
$\mathcal S_f(x,R)>c\lambda V(x,R)$, then for some $j\ge0$,
\begin{equation}\label{eq:active-pair-ball}
 \Gp(f)(B(x,A\,2^{-j}R))
 >c_1\lambda2^{j\delta}
 V(x,2^{-j}R)V(x,R).
\end{equation}
Indeed, if \eqref{eq:active-pair-ball} failed for every $j$, then
\eqref{eq:S-pairwise} would give
$\mathcal S_f(x,R)\le c\lambda V(x,R)$ once $c_1$ is chosen sufficiently
small, since $\sum_{j\ge0}2^{-j(\gamma-\delta)}<\infty$.

The measurability observation in Subsection~\ref{sec:metric-framework},
applied to $\Gp(f)$ and $m$, shows that the active-pair sets below are
Borel.

For $j\ge0$ let $E_{\lambda,j}$ be the set of pairs $(x,y)$, with
$R=d(x,y)>0$, for which \eqref{eq:active-pair-ball} holds.  We claim that
\begin{equation}\label{eq:fixed-j-pair}
 (m\otimes m)(E_{\lambda,j})
 \le\frac{C2^{-j\delta}}\lambda\,\E(f).
\end{equation}
To prove the claim, fix $o\in X$ and first restrict $x,y$ to $B(o,N)$.
Choose $C_0>\max\{A,1\}$ and equip $X\times X$ with the metric
\[
 \rho_j((u,v),(u',v'))
 :=\max\{2^jA^{-1}d(u,u'),\,C_0^{-1}d(v,v')\}.
\]
For every active pair $(x,y)$ let $\mathcal B_{x,y}$ be the
$\rho_j$-ball with center $(x,x)$ and radius $R=d(x,y)$.  Thus
\[
 \mathcal B_{x,y}
 =B(x,A\,2^{-j}R)\times B(x,C_0R).
\]
The active pair $(x,y)$ belongs to this product ball, so these balls cover
the active set.  Their radii are bounded by $2N$.
The countable $5$-covering lemma in the separable metric space
$(X\times X,\rho_j)$ gives a countable disjoint
subfamily $\{\mathcal B_i\}$ whose fivefold dilates cover all the active
pairs in $B(o,N)^2$; see \cite[Theorem~1.2]{Heinonen2001}.
Fixed-ratio volume comparison and \eqref{eq:active-pair-ball} imply
\[
 (m\otimes m)(5\mathcal B_i)
 \le C V(x_i,2^{-j}R_i)V(x_i,R_i)
 \le\frac{C2^{-j\delta}}\lambda\,
 \Gp(f)(B(x_i,A\,2^{-j}R_i)).
\]
The first-coordinate balls $B(x_i,A\,2^{-j}R_i)$ are pairwise disjoint.
Indeed, each is contained in the corresponding second-coordinate ball,
and a common point in two first-coordinate balls would put its diagonal
pair in both product balls.  Consequently their $\Gp(f)$-masses sum to at
most $\E(f)$.  We obtain \eqref{eq:fixed-j-pair} on $B(o,N)^2$; monotone
convergence as $N\to\infty$ proves the claim.

Sum \eqref{eq:fixed-j-pair} over $j$.  The geometric series converges, and
the same argument applies to the set with $x,y$ interchanged.  Hence
\[
 (m\otimes m)(D_\lambda)\le\frac C\lambda\E(f).
\]
Taking the supremum over $\lambda$ proves \eqref{eq:pairwise-upper}.
\end{proof}

\subsection{Recovery from the high-level pairwise tail}

\begin{lemma}
\label{lem:pairwise-extraction}
Assume \eqref{eq:VD} and $\mathrm{LS}(\beta)$ for some $\beta>0$, with
constant $C_L$.  Let $f\in L^p(X;m)$ with $\mathfrak W_{\Psi,\infty}(f)<\infty$.
For $k>0$ put $f_k=(-k)\vee(f\wedge k)$.  Then for every metric ball
$U\subset X$,
\begin{equation}\label{eq:local-pairwise-extraction}
 \liminf_{r\downarrow0}J_r(f_k;U)\le C\mathfrak W_{\Psi,\infty}(f),
\end{equation}
where $J_r(\,\cdot\,;U)$ is the localized functional from
Lemma~\ref{lem:localized-comparison}.  The constant $C$ depends only on
$C_D,C_L$, and $\beta$, and is independent of $U,k$, and $f$.
\end{lemma}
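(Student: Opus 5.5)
The plan is to use a BVSY-type layer-cake argument: average $J_r(f_k;U)$ logarithmically over a long range of small radii, and express the result through the distribution function of $K_{\Psi,f_k}$. Two preliminary facts are needed.

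\begin{itemize}
\item Truncation is $1$-Lipschitz, so $K_{\Psi,f_k}\le K_{\Psi,f}$ pointwise. Hence for every $\eta>0$ there is $t_0$ with $\lambda\,(m\otimes m)(\{K_{\Psi,f_k}>\lambda\})\le \mathfrak W_{\Psi,\infty}(f)+\eta$ for all $\lambda\ge t_0$.
\item Truncation also gives the pointwise cap $K_{\Psi,f_k}(y,z)\le (2k)^p/\Phi(d(y,z))$, where $\Phi(d):=\Psi(d)V(o,d)$ for a fixed centre $o$ of $U=B(o,\rho)$. On $U$ and at scales $d\le\rho$, doubling gives $V(y,d)\asymp V(o,d)$ only up to a factor depending on $\rho/d$, which is not harmless. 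So we keep the $y$-dependent volume in the kernel and use $\Phi$ only to parametrize radii. Where a fixed-ratio comparison is needed we use nearby-ball comparison at scale $r$.
\end{itemize}

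First, rewrite the localized functional as a kernel integral,
\[
 J_r(f_k;U)=\int_U\int_{B(y,r)}K_{\Psi,f_k}(y,z)\,\frac{\Psi(d(y,z))V(y,d(y,z))}{\Psi(r)V(y,r)}\,dm(z)\,dm(y).
\]
The weight is at most $1$ and decays as $d/r\to0$: $\Psi$ gives the factor $C_L(d/r)^\beta$ by $\mathrm{LS}(\beta)$, and the volume ratio is at most $1$. Next, for $0<\varepsilon<\delta$, average in $r$ against the probability measure $d\log\Phi(r)/\log(\Phi(\delta)/\Phi(\varepsilon))$ on $(\varepsilon,\delta)$. Then
\[
 \liminf_{r\downarrow0}J_r(f_k;U)\le\avint J_r(f_k;U)
 \le \frac{1}{\log(\Phi(\delta)/\Phi(\varepsilon))}\iint_{U\times X,\ d<\delta}K_{\Psi,f_k}\,w_\varepsilon\,dm\,dm .
\]
Tonelli and the decay of the weight (geometric summation over dyadic $r>d$, with constants depending only on $C_D,C_L,\beta$) give $w_\varepsilon(y,z)\le C$ for $d(y,z)\ge\varepsilon$. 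For $d<\varepsilon$ they give $w_\varepsilon\le C\,(\Psi(d)/\Psi(\varepsilon))\cdot(\text{fixed-ratio volume factor})$, and this extra small factor should be retained.

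Then split the pairs at the level $\Lambda_\varepsilon:=(2k)^p/\Phi(\varepsilon)$.
\begin{itemize}
\item \emph{Middle range.} On $\{t_0<K\le\Lambda_\varepsilon\}$ the layer-cake formula gives
\[
 \int_{t_0}^{\Lambda_\varepsilon}\frac{\mathfrak W_{\Psi,\infty}(f)+\eta}{t}\,dt=(\mathfrak W_{\Psi,\infty}(f)+\eta)\log(\Lambda_\varepsilon/t_0),
\]
and $\log\Lambda_\varepsilon=\log(\Phi(\delta)/\Phi(\varepsilon))+O_{k,\delta}(1)$.
\item \emph{Low range.} The part $\{K\le t_0\}$ costs at most $t_0\,(m\otimes m)(\{y\in U,\ d<\delta\})<\infty$, a quantity independent of $\varepsilon$.
\end{itemize}
Dividing by $\log(\Phi(\delta)/\Phi(\varepsilon))\to\infty$ as $\varepsilon\downarrow0$ leaves exactly $C(\mathfrak W_{\Psi,\infty}(f)+\eta)$. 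We then let $\eta\to0$.

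The main obstacle is the high range $\{K>\Lambda_\varepsilon\}$. By the cap, this set lies in $\{d<\varepsilon\}$, but the crude bound $K\le(2k)^p/\Phi(d)$ integrated over $\{d<\varepsilon\}$ is not small. There I would again use the tail bound together with the retained factor $\Psi(d)/\Psi(\varepsilon)$ in $w_\varepsilon$. Split $\{d<\varepsilon\}$ dyadically into shells $d\sim2^{-i}\varepsilon$. On each shell, $K\cdot w_\varepsilon\lesssim 2^{-i\beta}K$, and $K\le(2k)^p/\Phi(2^{-i-1}\varepsilon)$. The layer-cake bound over levels $t\in(\Lambda_\varepsilon,\ (2k)^p/\Phi(2^{-i-1}\varepsilon)]$ then costs at most $(\mathfrak W_{\Psi,\infty}(f)+\eta)\log(\Phi(\varepsilon)/\Phi(2^{-i-1}\varepsilon))$.

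The difficulty is that this logarithm must be summed against $2^{-i\beta}$ with constants depending only on $C_D,C_L,\beta$. One needs $\log(\Phi(\varepsilon)/\Phi(2^{-i-1}\varepsilon))\lesssim i+1$ without using $C_\Psi$. If that fails, the fallback is to replace $\Phi$ in the averaging by a regularization built from $\Psi(r)V(o,r)$ along dyadic radii. Whichever route is used, the total must be $O(1)\cdot\mathfrak W_{\Psi,\infty}(f)$, which becomes negligible after division by the diverging logarithm.
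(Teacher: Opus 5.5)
Your weight bounds for $w_\varepsilon$ and your treatment of the low and middle ranges are sound. However, the argument is incomplete at the step you yourself call the main obstacle, and that step rests on a false inequality. The cap $K_{\Psi,f_k}(y,z)\le(2k)^p/\Phi(d(y,z))$ with $\Phi(d)=\Psi(d)V(o,d)$ does not hold. The true cap is $(2k)^p/(\Psi(d)V(y,d))$, and, as you note, $V(y,d)$ can be smaller than $V(o,d)$ by a factor polynomial in $\rho/d$. Consequently:
\begin{itemize}
\item the set $\{K_{\Psi,f_k}>\Lambda_\varepsilon\}$ need not lie in $\{d<\varepsilon\}$;
\item on your dyadic shells, the logarithmic excess over $\Lambda_\varepsilon$ contains $\log(V(o,\varepsilon)/V(y,\varepsilon))$, which can be of order $\log(1/\varepsilon)$ rather than $O(1)$.
\end{itemize}
So the claim that the high-range total is $O(1)$ is not justified, and the shell summation is in any case left conditional ("if that fails, the fallback is\dots"). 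Your worry about $C_\Psi$ is beside the point. Any $\varepsilon$-independent additive term disappears after division by the diverging logarithm, and $\Psi$ is doubling in the standing framework anyway.

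The missing idea is to apply the layer-cake formula to the scale-averaged kernel $K_{\Psi,f_k}w_\varepsilon$, not to $K_{\Psi,f_k}$ itself. Your own estimates already give two facts: $w_\varepsilon\le C$ everywhere, and $w_\varepsilon\le C\,\Psi(d)V(y,d)/(\Psi(\varepsilon)V(y,\varepsilon))$ for $d<\varepsilon$. Combining them with $\inf_{y\in U}V(y,\varepsilon)\ge c_U\varepsilon^{Q_D}$, where $Q_D=\log_2C_D$, gives
\[
 K_{\Psi,f_k}w_\varepsilon
 \le C\min\Bigl\{K_{\Psi,f},\ \frac{(2k)^p}{\Psi(\varepsilon)V(y,\varepsilon)}\Bigr\}
 \le C\min\Bigl\{K_{\Psi,f},\ \frac{C_{U,k}}{\Psi(\varepsilon)\varepsilon^{Q_D}}\Bigr\}.
\]
With this bound there is no high range at all. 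The layer cake stops at $M_\varepsilon=C_{U,k}\Psi(\varepsilon)^{-1}\varepsilon^{-Q_D}$. Moreover, $\mathrm{LS}(\beta)$ gives $\log(\Psi(\delta)/\Psi(\varepsilon))\ge\beta\log(\delta/\varepsilon)-\log C_L$. Hence $\log M_\varepsilon\le(1+Q_D/\beta+o(1))\log(\Psi(\delta)/\Psi(\varepsilon))$, and the right-hand side is at most $(1+Q_D/\beta+o(1))\log(\Phi(\delta)/\Phi(\varepsilon))$. This absorption of the volume loss $\varepsilon^{-Q_D}$ is where $\mathrm{LS}(\beta)$ is genuinely needed.

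This is exactly the paper's proof, except that the paper averages against $d\Psi/\Psi$ instead of $d\log\Phi$. With that choice, the identity $\int_{(a,\delta]}d\Psi/\Psi^2\le1/\Psi(a)$ and monotonicity of $V$ alone give $L_{\varepsilon,\delta}\le K_{\Psi,f_k}$ with constant $1$, and the final constant is $1+Q_D/\beta$. Your $d\log\Phi$ averaging needs $\mathrm{LS}(\beta)$ and a dyadic summation just to bound $w_\varepsilon$. Its intended advantage, namely matching the cap level exactly with the normalizing logarithm, is lost because $V(o,\cdot)$ is not the volume appearing in the kernel.
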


\begin{proof}
Write $g=f_k$ and fix $0<\varepsilon<\delta$, with
$\delta<\diam X$ when $\diam X<\infty$.
The following logarithmic averaging over scales is the ball-kernel
analogue of the high-level tail arguments in
\cite[Section~4]{BSSVY2024} and
\cite[Theorem~1.5 and Section~2]{Poliakovsky2022}; compare also
\cite[Proposition~1.3]{HytonenYangYuanZhao2026} for a localized BBM
characterization on Poincar\'e spaces.  Let $d\Psi$ denote the
Lebesgue--Stieltjes measure of the continuous increasing function $\Psi$.
Tonelli's theorem gives
\begin{align}
 I_{\varepsilon,\delta}
 &:=\int_{(\varepsilon,\delta]}J_r(g;U)\,
 \frac{d\Psi(r)}{\Psi(r)} \notag\\
 &=\iint_{X\times X}L_{\varepsilon,\delta}(x,y)\,
 dm(y)\,dm(x),                                           \label{eq:I-pairwise}
\end{align}
where, with $a=\max\{\varepsilon,d(x,y)\}$,
\begin{equation}\label{eq:capped-kernel}
 L_{\varepsilon,\delta}(x,y)
 :=\mathbf1_U(x)\mathbf1_{\{d(x,y)<\delta\}}
 |g(x)-g(y)|^p
 \int_{(a,\delta]}
 \frac{d\Psi(r)}{\Psi(r)^2V(x,r)}.
\end{equation}
The integral is understood as zero when $a\ge\delta$.  Monotonicity of
$r\mapsto V(x,r)$ and the identity
\[
 \int_{(a,\delta]}\frac{d\Psi(r)}{\Psi(r)^2}
 =\frac1{\Psi(a)}-\frac1{\Psi(\delta)}
\]
give
\begin{equation}\label{eq:capped-dominated}
 0\le L_{\varepsilon,\delta}\le K_{\Psi,g}
 \le K_{\Psi,f}.
\end{equation}
The last inequality follows because truncation is $1$-Lipschitz.

The kernel $L_{\varepsilon,\delta}$ vanishes outside
$U\times U_\delta$, a set of finite $m\otimes m$-measure.  To quantify the
lower volume bound, choose $o$ and $R_0>0$ with $U\subset B(o,R_0)$.  Put
$\varepsilon_U:=\min\{1,2R_0\}$.  Then
$B(o,R_0)\subset B(x,2R_0)$ for $x\in U$, and iterating
\eqref{eq:VD} backwards from $2R_0$ to $\varepsilon$ gives
\[
 \inf_{x\in U}V(x,\varepsilon)\ge c_U\varepsilon^{Q_D},
 \qquad 0<\varepsilon<\varepsilon_U,
\]
where $Q_D:=\log_2 C_D$.  The constant $c_U>0$ absorbs the rounding of
the number of iterations.
Since $|g|\le k$, \eqref{eq:capped-kernel} therefore yields
\begin{equation}\label{eq:capped-Linfty}
 \|L_{\varepsilon,\delta}\|_\infty
 \le\frac{C_{U,k}}{\Psi(\varepsilon)\varepsilon^{Q_D}}
 =:M_\varepsilon.
\end{equation}

Fix $b>\mathfrak W_{\Psi,\infty}(f)$.  There is $0<\lambda_0<\infty$ such that
\[
 (m\otimes m)(\{K_{\Psi,f}>\lambda\})\le\frac b\lambda,
 \qquad \lambda\ge\lambda_0.
\]
Write $\log^+t:=\max\{\log t,0\}$ for $t>0$.  The layer-cake formula,
\eqref{eq:capped-dominated}, and
\eqref{eq:capped-Linfty} show that
\begin{equation}\label{eq:layer-pairwise}
 I_{\varepsilon,\delta}
 \le\lambda_0(m\otimes m)(U\times U_\delta)
 +b\log^+\frac{M_\varepsilon}{\lambda_0}.
\end{equation}
On the other hand,
\begin{equation}\label{eq:scale-average-pairwise}
 \left(\inf_{\varepsilon<r\le\delta}J_r(g;U)\right)
 \log\frac{\Psi(\delta)}{\Psi(\varepsilon)}
 \le I_{\varepsilon,\delta}.
\end{equation}
The lower scaling assumption implies, for fixed $\delta$,
\[
 \log\frac{\Psi(\delta)}{\Psi(\varepsilon)}
 \ge \beta\log\frac{\delta}{\varepsilon}-\log C_L.
\]
Together with \eqref{eq:capped-Linfty}, this gives
\begin{equation}\label{eq:log-ratio-pairwise}
 \limsup_{\varepsilon\downarrow0}
 \frac{\log^+(M_\varepsilon/\lambda_0)}
 {\log(\Psi(\delta)/\Psi(\varepsilon))}
 \le 1+\frac {Q_D}\beta.
\end{equation}
Continuity of $\Psi$ makes $d\Psi$ atomless, so no endpoint terms arise.
The dependence on $U$, $k$, $\lambda_0$, and the fixed $\delta$ occurs
only in the finite first term of \eqref{eq:layer-pairwise} or in additive
logarithmic terms.  These contributions vanish after division by
$\log(\Psi(\delta)/\Psi(\varepsilon))$ and passage to the limit.
Divide \eqref{eq:layer-pairwise} by the logarithm in
\eqref{eq:scale-average-pairwise} and let $\varepsilon\downarrow0$.  Since
\[
 \lim_{\varepsilon\downarrow0}
 \inf_{\varepsilon<r\le\delta}J_r(g;U)
 =\inf_{0<r\le\delta}J_r(g;U),
\]
letting next $\delta\downarrow0$ gives $\liminf_{r\downarrow0}J_r(g;U)$.
Finally, letting $b\downarrow\mathfrak W_{\Psi,\infty}(f)$ proves
\eqref{eq:local-pairwise-extraction}.
\end{proof}

\begin{lemma}\label{lem:local-global}
Assume the hypotheses of Theorem~\ref{thm:main}.  Let $f\in L^p(X;m)$,
$L<\infty$, and $f_k=(-k)\vee(f\wedge k)$.  If, for every ball $U$ and
every $k>0$,
\begin{equation}\label{eq:local-global-hyp}
 \liminf_{r\downarrow0}\int_U M_{f_k}(x,r)\,dm(x)\le L,
\end{equation}
then $f\in\F$ and $\E(f)\le CL$.
The constant $C$ depends only on
$p,C_D,C_\Psi,C_P,A_P,C_c$, and $A_c$.
\end{lemma}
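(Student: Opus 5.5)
We plan to pass from the local small-scale hypothesis to a global bound on the Korevaar--Schoen functionals of each truncation $f_k$, then invoke the domain identification \eqref{eq:domain}--\eqref{eq:energy-equiv}, and finally remove the truncation using closedness of the form.

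\emph{Step 1: fix $k$ and a ball, and propagate to all scales.} Fix $k>0$ and a ball $B=B(o,N)$. For any $s>0$ and $0<r<s$, integrate \eqref{eq:mPI} for $f_k$ over $x\in B$ and use Tonelli together with the volume comparison from the proof of Theorem~\ref{thm:two-scale}. This gives
\[
 \int_B M_{f_k}(x,s)\,dm(x)\le C\int_{B(o,N+A_Ms)}M_{f_k}(y,r)\,dm(y).
\]
We take the $\liminf_{r\downarrow0}$ and apply \eqref{eq:local-global-hyp} with $U=B(o,N+A_Ms)$. This yields $\int_B M_{f_k}(x,s)\,dm(x)\le CL$, uniformly in $N$ and $s$. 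Letting $N\to\infty$ by monotone convergence gives $\mathcal O_s(f_k)\le CL$ for every $s>0$. By \eqref{eq:J-to-O} we then get $J_r(f_k)\le C\mathcal O_{2r}(f_k)\le CL$ for every $r>0$. Since $f_k\in L^p$, it follows that $\sup_r J_r(f_k)\le CL$.

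\emph{Step 2: domain identification.} By \eqref{eq:domain}, each $f_k\in\B_\Psi=\F$. By \eqref{eq:energy-equiv}, $\E(f_k)\le C\liminf_{r\downarrow0}J_r(f_k)\le CL$, with $C$ depending only on the structural data.

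\emph{Step 3: remove the truncation.} As $k\to\infty$ we have $f_k\to f$ in $L^p$ by dominated convergence, and $\sup_k\E(f_k)\le CL$. I expect this to be the main obstacle, since it needs a lower semicontinuity statement. The first route uses the standard consequence of closedness and reflexivity (Clarkson, \cite[Proposition~3.13]{KajinoShimizuContraction}). A sequence bounded in the graph norm has a subsequence converging weakly in $(\F,\|\cdot\|_{L^p}+\E^{1/p})$. The weak limit must be $f$, because $L^p$ convergence identifies it. Weak lower semicontinuity of the norm then gives $f\in\F$ with $\E(f)\le\liminf_k\E(f_k)\le CL$.

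An alternative avoids abstract closedness altogether. The functionals $J_r$ are continuous on $L^p$ for each fixed $r$, so $J_r(f)=\lim_k J_r(f_k)\le CL$ for every $r$. Hence $f\in\B_\Psi=\F$, and \eqref{eq:energy-equiv} gives $\E(f)\le C\sup_rJ_r(f)\le CL$ directly. I would present this second argument, since it uses only \eqref{eq:domain}--\eqref{eq:energy-equiv}. In that case the real content sits in Step 1, in the uniformity of the constant with respect to the enlarged ball.
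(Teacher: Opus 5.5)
Your argument is correct and is essentially the paper's proof: integrate \eqref{eq:mPI} over a ball, take the lower limit in $r$ using the hypothesis on the enlarged ball, exhaust $X$ to get $\mathcal O_s(f_k)\le CL$, and convert this to $\sup_s J_s(f_k)\le CL$ by Lemma~\ref{lem:comparison}. You then pass to $f$ at each fixed scale and conclude with \eqref{eq:domain}--\eqref{eq:energy-equiv}; the only difference is cosmetic, since the paper uses Fatou's lemma instead of the $L^p$-continuity of $J_r$ to pass from $f_k$ to $f$, so your Step~2 and the reflexivity route are not needed.
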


\begin{proof}
Fix $x_0\in X$ and integrate \eqref{eq:mPI} over $B(x_0,R)$.
Tonelli's theorem and
nearby-ball volume comparison give, for $0<r<s$,
\[
 \int_{B(x_0,R)}M_{f_k}(x,s)\,dm(x)
 \le C\int_{B(x_0,R+A_Ms)}M_{f_k}(y,r)\,dm(y).
\]
Take the lower limit as $r\downarrow0$, use
\eqref{eq:local-global-hyp}, and let $R\uparrow\infty$.  Thus
$\mathcal O_s(f_k)\le CL$ for every $s,k>0$.
Lemma~\ref{lem:comparison} gives $\sup_sJ_s(f_k)\le CL$; Fatou's lemma in
\eqref{eq:J} gives the same bound for $f$.  The domain identification~\eqref{eq:domain} and the energy comparison~\eqref{eq:energy-equiv} complete the proof.
\end{proof}

\begin{proof}[Proof of Theorem~\ref{thm:pairwise-weak}]
By Corollary~\ref{cor:forward-LS}, $\Psi$ satisfies $\mathrm{LS}(p)$.
Proposition~\ref{prop:pairwise-upper} gives
\[
 \mathfrak W_{\Psi,\infty}(f)
 \le\mathfrak W_\Psi(f)\le C\E(f),\qquad f\in\F.
\]
Conversely, let $f\in L^p(X;m)$ with $\mathfrak W_{\Psi,\infty}(f)<\infty$.
Put $L=\mathfrak W_{\Psi,\infty}(f)$ and
$f_k=(-k)\vee(f\wedge k)$.  Fix a ball $U$ and a larger ball
$\widetilde U$ containing $U_r$ for all sufficiently small $r$.
The localized comparison
\eqref{eq:localized-comparison} and
Lemma~\ref{lem:pairwise-extraction} give
\[
 \liminf_{r\downarrow0}\int_UM_{f_k}(x,r)\,dm(x)
 \le C\liminf_{r\downarrow0}J_{2r}(f_k;\widetilde U)
 \le CL,
\]
uniformly in $U$ and $k$.  Lemma~\ref{lem:local-global} gives $f\in\F$ and
\begin{equation}\label{eq:pairwise-reverse}
 \E(f)\le CL=C\mathfrak W_{\Psi,\infty}(f).
\end{equation}
Combining these estimates proves~\eqref{eq:pairwise-domain} and~\eqref{eq:pairwise-energy}.
\end{proof}

\section*{Acknowledgments}
F.B.\ is partially supported by grant 10.46540/4283-00175B from the
Independent Research Fund Denmark, by the Villum Investigator grant
\emph{Stochastic Analysis in Aarhus}, and by the European Research Council
(ERC) under the European Union's Horizon Europe research and innovation
programme.  The ERC support is provided through the RanGe project
(Grant Agreement No.~101199772).

\section*{Declaration on the use of generative AI}

The authors are responsible for the mathematical ideas and the structure
of the paper.  ChatGPT (Astra) and Claude (Fable) were used to assist with proofreading, proof presentation, editorial
revision, and bibliographical research.  The authors reviewed and edited
the resulting content and take full responsibility for the publication.

\end{document}